\documentclass[11pt, reqno, twoside]{amsart}
\usepackage{latexsym}
\usepackage{amssymb}
\usepackage{amsmath}
\usepackage{bbm}
\usepackage{bm}

\usepackage{tikz}
\usepackage{tkz-euclide}
\usetikzlibrary{decorations.fractals}
\usetikzlibrary{decorations.footprints}
\usepackage[colorlinks=true, pdfstartview=FitV, linkcolor=blue, citecolor=magenta, urlcolor=black, backref=page]{hyperref} % Includes hyperrefs into pdf

\usepackage{tikz,amsthm,amsmath,amstext,amssymb,amscd,epsfig,euscript,pspicture,multicol,graphpap,graphics,graphicx,enumerate,subfig,sidecap,wrapfig,color,pict2e}
\usepackage{mathrsfs} 
\usepackage{upgreek}
\usepackage{textgreek}

\usepackage[utf8]{inputenc}

\usepackage[top=1in, bottom=1in, left=1in, right=1in, a4paper, marginparwidth=60pt]{geometry}

\usepackage{marginnote}
\calclayout
\allowdisplaybreaks

\theoremstyle{plain}
\newtheorem{theorem}[equation]{Theorem}
\newtheorem{lemma}[equation]{Lemma}
\newtheorem{corollary}[equation]{Corollary}
\newtheorem{proposition}[equation]{Proposition}

\theoremstyle{definition}

\theoremstyle{remark}
\newtheorem{remark}[equation]{Remark}

\numberwithin{equation}{section}

\renewcommand*{\backref}[1]{}
\renewcommand*{\backrefalt}[4]{%
 \ifcase #1 (Not cited.)%
   \or        (Cited on page~#2.)%
    \else      (Cited on pages~#2.)%
    \fi}

\DeclareMathOperator{\diam}{diam}
\DeclareMathOperator{\dv}{div}
\DeclareMathOperator{\Tr}{Tr}

\DeclareMathOperator{\dist}{dist}

\DeclareMathAlphabet{\mathsfit}{T1}{\sfdefault}{\mddefault}{\sldefault}
\SetMathAlphabet{\mathsfit}{bold}{T1}{\sfdefault}{\bfdefault}{\sldefault}

\DeclareRobustCommand{\SkipTocEntry}[5]{}%for removing an entry from toc (if hyperref is not involved, change [5] to [4].)

\newcommand{\set}[2]{\left\{#1 : #2\right\}}%---------------------set comprehension
\newcommand{\sub}{\subseteq}%----------------------------subset or equal
\newcommand{\mns}{\setminus}%------------------------------set excision
\newcommand{\N}{\mathbb{N}}%----------------------------------------natural numbers
\newcommand{\R}{\mathbb{R}}%----------------------------------------real numbers
\newcommand{\del}{\partial}%------------------------------------------------topological boundary

\newcommand{\I}{\mathbb{I}}%--------------------------------------------identity matrix
\newcommand{\eps}{\varepsilon}%-----------------------------------------epsilon
\newcommand{\inv}[1]{{#1}^{-1}}%----------------------------------------inverse
\newcommand{\dx}{\, dx}%-----------------------------------------dx after integral
\newcommand{\loc}{\text{\rm loc}}%-----------------------------------loc as subscript
\newcommand{\Om}{\Omega}%--------------------------------------------------------usually domain
\newcommand{\om}{\omega}%---------------------------------------------------usually function
\newcommand{\inp}[2]{\big\langle #1,#2\big\rangle}%-----------------inner product/duality action
\newcommand{\vertiii}[1]{{\left\vert\kern-0.25ex\left\vert\kern-0.25ex\left\vert #1 
    \right\vert\kern-0.25ex\right\vert\kern-0.25ex\right\vert}}%--------------------------------another norm
\newcommand{\gr}{\nabla}%-----------------------------------------------------gradient
\newcommand{\lap}{\Delta}%------------------------------------------------Laplacian
\newcommand{\h}{\mathcal{H}}%------------------------------------------hausdorff measure
\newcommand{\on}{%
  \,\raisebox{-.127ex}{\reflectbox{\rotatebox[origin=br]{-90}{$\lnot$}}}\,}%------------restriction of measure
  
\def\vint_#1{\mathchoice%
          {\mathop{\kern 0.2em\vrule width 0.6em height 0.69678ex
depth -0.58065ex
                  \kern -0.8em \intop}\nolimits_{\kern -0.4em#1}}%
          {\mathop{\kern 0.1em\vrule width 0.5em height 0.69678ex
depth -0.60387ex
                  \kern -0.6em \intop}\nolimits_{#1}}%
          {\mathop{\kern 0.1em\vrule width 0.5em height 0.69678ex
              depth -0.60387ex
                  \kern -0.6em \intop}\nolimits_{#1}}%
          {\mathop{\kern 0.1em\vrule width 0.5em height 0.69678ex
depth -0.60387ex
                  \kern -0.6em \intop}\nolimits_{#1}}}
\def\vintslides_#1{\mathchoice%
          {\mathop{\kern 0.1em\vrule width 0.5em height 0.697ex depth -0.581ex
                  \kern -0.6em \intop}\nolimits_{\kern -0.4em#1}}%
          {\mathop{\kern 0.1em\vrule width 0.3em height 0.697ex depth -0.604ex
                  \kern -0.4em \intop}\nolimits_{#1}}%
          {\mathop{\kern 0.1em\vrule width 0.3em height 0.697ex depth -0.604ex
                  \kern -0.4em \intop}\nolimits_{#1}}%
          {\mathop{\kern 0.1em\vrule width 0.3em height 0.697ex depth -0.604ex
                  \kern -0.4em \intop}\nolimits_{#1}}}

\newcommand{\aveint}[2]{\mathchoice%--------------------------------------------------------integral average on reals
          {\mathop{\kern 0.2em\vrule width 0.6em height 0.69678ex
depth -0.58065ex
                  \kern -0.8em \intop}\nolimits_{\kern -0.45em#1}^{#2}}%
          {\mathop{\kern 0.1em\vrule width 0.5em height 0.69678ex
depth -0.60387ex
                  \kern -0.6em \intop}\nolimits_{#1}^{#2}}%
          {\mathop{\kern 0.1em\vrule width 0.5em height 0.69678ex
depth -0.60387ex
                  \kern -0.6em \intop}\nolimits_{#1}^{#2}}%
          {\mathop{\kern 0.1em\vrule width 0.5em height 0.69678ex
depth -0.60387ex
                  \kern -0.6em \intop}\nolimits_{#1}^{#2}}}

\newcommand{\Sn}{\mathbb S}%------------------------------------------------------------------------------unit sphere
\newcommand{\Snn}{\Sn^{n-1}}%------------------------------------------------------------------------------n-1 dim unit sphere
\newcommand{\g}{\mathbf{g}}%--------------------------------------------------------------------------Gauss map
\newcommand{\F}{\mathscr{F}}%--------------------------------------------------------------------------density of existence measure
\newcommand{\cof}{\mathcal{C}}%-----------------------------------------------------------------------cofactor matrix
\newcommand{\kr}{\mathcal{K}}%-----------------------------------------------------------------------Gauss curvature
\newcommand{\cov}{\text{\raisebox{2pt}{$\bigtriangledown$}}}%------------------------covariant derivative
\newcommand{\covtwo}{{\cov}{}^2}%--------------------------------------second covariant derivative

\newcommand{\Wg}{\mathcal{W}}%---------------------------------------------------------------------Weingarten map
\newcommand{\pnu}{\gr u/|\gr u|}%-------------------------------------gradient normal
\newcommand{\T}{\mathscr{T}}%---------------------------------BM functional

\newcommand{\convexcomb}[2]{(1-\lambda)\, #1 + \lambda\, #2}%----convex combination

\newcommand{\bigbrackets}[1]{\left\{#1\right\}}%----brackets
\newcommand{\ukk}{u_{K,K'}}%-----------------p harmonic function on K_0+K'

\newcommand{\npbm}{{1/(n-p+1)}}%------------bm index

\usepackage{filecontents}

\begin{document} 

	\allowdisplaybreaks[2]
%\frontmatter

\title[Brunn-Minkowski inequality for $p$-harmonic measures]{Brunn-Minkowski inequality for $p$-harmonic measures}
\author[A. Aguas-Barreno]{Ariel A. Aguas-Barreno}
\address{{Ariel A. Aguas-Barreno}\\
Department of Mathematical Sciences
\\
University of Essex
\\
Colchester CO4 3SQ, United Kingdom}	\email{aa23688@essex.ac.uk}

\author[M. Akman]{Murat Akman}
\address{{Murat Akman}\\
Department of Mathematical Sciences
\\
University of Essex
\\
Colchester CO4 3SQ, United Kingdom}	\email{murat.akman@essex.ac.uk}

\author[S. Mukherjee]{Shirsho Mukherjee}
\address{{Shirsho Mukherjee}\\
Department of Mathematical Sciences
\\
University of Essex
\\
Colchester CO4 3SQ, United Kingdom}	\email{shirsho.mukherjee@essex.ac.uk, \ m.shirsho@gmail.com}

\thanks{M. Akman and S. Mukherjee have been supported by the EPSRC New Investigator award [grant number EP/W001586/1]. A. Aguas-Barreno has been partially supported by the School of Mathematics, Statistics and Actuarial Science at the University of Essex.}

\date{\today}

\subjclass[2010]{31B05, 35J08, 35J25, 42B37, 42B25, 42B99}

\keywords{Convex Geometry, $p$-Laplacian, $p$-harmonic measure, Minkowski problem}
\begin{abstract}
We prove a local Brunn-Minkowski inequality for a functional corresponding to $p$-harmonic measures for $2<p<n+1$. 
%which thereby, leads to proving a local uniqueness of domains that solve the Minkowski problem for $p$-harmonic measures. 
 \end{abstract}

\maketitle
\setcounter{tocdepth}{2}
%\tableofcontents

%\mainmatter

\section{Introduction}

% , Fenchel-Jessen \cite{FJ}. 
%
%Lewy \cite{Lewy}, Pogorelov \cite{Pog}, Nirenberg \cite{Nir}, 
%Cheng-Yau \cite{CY}, Caffarelli \cite{Caff90-2,Caff90-1,Caff91}, etc. 
%
% Jerison \cite{J1}. 
%
%
%
% Lewis \cite{Lnote}, Lewis-Nystr\"{o}m-Vogel \cite{LNV}, see also 
%\cite{ALV, HKM, LLN, LN}, etc. 

% \cite{BKLYZ,CW,GG,GLL,Guan-Ma,L-O}
% 
% Jerison \cite{J1, J2}
%  Colesanti et al. \cite{CNSXYZ}, Akman-Mukherjee \cite{Ak-Muk}, 
%   \cite{ALSV,AGHLV,ALVcalvarpde}\\

The Brunn-Minkowski inequalities and Minkowski problems of prescribing measures on the unit sphere goes back to the works of Minkowski \cite{M2,M1}, Alexandrov \cite{A1, A2}, etc. and has been one of the most prominent case to  study in geometric analysis. They are also intimately connected to other inequalities including the isoperimetric inequality, Sobolev
inequalities, etc. 
The classical Brunn-Minkowski inequality states that for any compact convex sets $E_1, E_2\subset \R^n$ with nonempty interiors, we have that, 
\begin{align}\label{BMVolume}
|\lambda E_1+(1-\lambda) E_2)|^{\frac{1}{n}} 
\geq \lambda |E_1|^{\frac{1}{n}} +(1-\lambda) |E_2)|^{\frac{1}{n}},
\end{align}
for any $\lambda \in [0,1]$, where $|\cdot|$ is the Lebesgue measure and $+$ on the left denotes the Minkowski addition. Thus, the inequality \eqref{BMVolume} implies that $|\cdot|^{1/n}$ is a concave function with respect to Minkowski addition. Moreover, equality in \eqref{BMVolume} holds if and only if $E_1$ is a translation and dilation of $E_2$. 
For numerous applications of \eqref{BMVolume} to problems in geometry and analysis, we refer to the classical book by Schneider \cite{Sc} and the survey paper by Gardner \cite{G}.

Over the years, inequalities of Brunn-Minkowski type have also been proved for many other measures and homogeneous functionals. For example, one can replace the Lebesgue measure in \eqref{BMVolume} by capacity; in this case, it was shown by Borell \cite{B1} that
\begin{align}
\label{BMcapacity2}
\left[\mbox{Cap}_{2}(\lambda E_1+(1-\lambda) E_2)\right]^{\frac{1}{n-2}} \geq \lambda \left[\mbox{Cap}_{2}(E_{1})\right]^{\frac{1}{n-2}}+(1-\lambda)\left[\mbox{Cap}_{2}(E_2)\right]^{\frac{1}{n-2}},
\end{align}
whenever $E_1, E_2$ are compact convex sets with nonempty interiors in $\mathbb{R}^{n}$, $n \geq 3$. Here, $\mbox{Cap}_{2}$ denotes the \textit{Newtonian capacity}. The exponents in this inequality and \eqref{BMVolume} differ because $|\cdot |$ is homogeneous of degree $n$, whereas $\mbox{Cap}_{2}(\cdot)$ is homogeneous of degree $n-2$. 
Borell \cite{B2} proved a Brunn-Minkowski-type inequality for \textit{logarithmic capacity}. The equality case in \eqref{BMcapacity2} was studied by Caffarelli, Jerison, and Lieb in \cite{CJL}, where it was shown that equality in \eqref{BMcapacity2} holds if and only if $E_2$ is a translate and dilate of $E_1$ for $n \geq 3$. Jerison in \cite{J2} used that result to prove uniqueness of the Minkowski problem. In \cite{CS}, Colesanti and Salani proved the $p$-capacitary version of \eqref{BMcapacity2} for $1 < p < n$, i.e.  
\begin{align}
\label{BMcapacityp}
\left[\mbox{Cap}_{p}(\lambda E_1+(1-\lambda) E_2)\right]^{\frac{1}{n-p}} \geq \lambda \left[\mbox{Cap}_{p}(E_{1})\right]^{\frac{1}{n-p}}+(1-\lambda)\left[\mbox{Cap}_{p}(E_2)\right]^{\frac{1}{n-p}}
\end{align}
whenever $E_1, E_2$ are compact convex sets with nonempty interiors in $\mathbb{R}^{n}$, where $\mbox{Cap}_{p}(\cdot)$ denotes the $p$-capacity of a set defined as
\begin{align}
    \label{BM_pCap}
    \mbox{Cap}_{p}(E) = \inf \left\{ \int_{\mathbb{R}^{n}} |\nabla v|^{p} \, dx : v \in C^{\infty}_{0}(\mathbb{R}^{n}), \, v(x) \geq 1 \text{ for } x \in E \right\}.
\end{align}

It was also shown in the same paper that equality in \eqref{BMcapacityp} holds if and only if $E_2$ is a translate and dilate of $E_1$. In \cite{CC}, Colesanti and Cuoghi defined a logarithmic capacity for $p = n, n \geq 3,$ and proved a Brunn-Minkowski-type inequality for this capacity. In \cite{CNSXYZ}, a Minkowski problem was studied for $p$-capacity, $1 < p < 2$, using \eqref{BMcapacityp}. For $1 < p < n$, the second author of the present article, along with others in \cite{AGHLV}, showed that \eqref{BMcapacityp} holds for any convex set with positive $p$-capacity and for capacities associated with more general elliptic PDEs (known as $\mathcal{A}$-harmonic PDEs), with the same conclusion if equality holds. In \cite{ALSV}, the existence and uniqueness of the so-called $\mathcal{A}$-harmonic Green's function for the complement of a convex compact set in $\mathbb{R}^n$ were established; using this, it was shown that a quantity related with the behavior of this function near infinity satisfies a Brunn-Minkowski inequality for $n \leq p < \infty$. It was also shown in the same article that if equality holds $E_1$ and $E_2$, then under certain regularity and structural assumptions on $\mathcal{A}$ (no additional assumption in the case of $p$-Laplace equation), then these two sets are homothetic. 
The torsional rigidity and first eigenvalue of the Laplacian versions of \eqref{BMVolume} were studied in \cite{C}. We refer to \cite{ALSV,AGHLV,ALVcalvarpde} etc. for more results. 

Harmonic measures has been studied by Dahlberg \cite{D} and Jerison \cite{J1} and as a generalization of it, the $p$-harmonic measures have been studied by Lewis and Nystr\"{o}m, see \cite{LN,LNld, LNV}, etc. (see Section \ref{sec:prelim} for details). Recently, the existence of domains solving the Minkowski problem for $p$-harmonic measures has been established by the second and the third author in \cite{Ak-Muk}. In this paper, we prove a Brunn-Minkowski-type inequality curated towards some form of uniqueness of domains that solve the Minkowski problem for $p$-harmonic measures. 
%See the survey article \cite{HYZ} and the references therein for more on the Brunn-Minkowski inequality.

The main result of this paper is a local Brunn-Minkowski inequality for $p$-harmonic measures. 
  \begin{theorem}\label{thm:bm}
Let $K_0\subset \R^n$ be a compact convex set of non-empty interior and 
$\om_{0}$ be any given $p$-harmonic measure on $\del K_0$ for $2<p<n+1$. 
There exists a neighborhood $\mathcal{N}$ of convex sets of non-empty interior containing $K_0$ such that for any $K\in \mathcal{N}$, there exists a $p$-harmonic measure $\om_K$ on $\del K$ with $\om_{K_0}=\om_0$ and for the functional $\T:\mathcal{N}\to \R$ defined by 
\begin{equation}\label{eq:bmfunc}
\T(K) = \int_{\del K} (h_K\circ \g_K) \, d\om_K,
\end{equation}
where $h_K:\Snn \to \R$ is the support function of $K$ and $\g_K:\del K\to \Snn$ is the Gauss map, we have the inequality
\begin{equation}\label{eq:bm0}
\T\big((1-\lambda)K_{1} + \lambda K_{2}\big)^\frac{1}{n-p+1} 
\geq (1- \lambda)\, \T(K_1)^\frac{1}{n-p+1} + \lambda \,\T(K_2)^\frac{1}{n-p+1},
\end{equation}
for every $K_1, K_2\in \mathcal{N}$ and $\lambda\in [0,1]$. 
%Moreover, equality holds iff $K_1$ and $K_2$ are homothetic. 
\end{theorem}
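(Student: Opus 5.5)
Our plan is to use the standard reduction of a Brunn–Minkowski inequality to a concavity statement along Minkowski segments, verified through a second-variation computation in the spirit of Jerison and Colesanti, localized around $K_0$.

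\emph{Setting up the measures.} A $p$-harmonic measure on $\del K_0$ comes from a $p$-harmonic function $u_0$ in a neighborhood of $\del K_0$ in $K_0^c$; we take $u_0$ in a collar ring $\Om_0=(K_0+B)\mns K_0$ with $u_0=0$ on $\del K_0$ and $u_0=1$ on the outer boundary. Then $d\om_0=|\gr u_0|^{p-1}d\h^{n-1}$. For $K$ near $K_0$ we define $u_K$ as the $p$-harmonic function in $(K+K')\mns K$ with the same boundary data, where $K'$ is the fixed convex set used to build $\Om_0$ (this is the function $\ukk$). We set $\om_K=|\gr u_K|^{p-1}\h^{n-1}\on\del K$, so that $\om_{K_0}=\om_0$. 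By boundary Harnack and regularity results for $p$-harmonic functions near convex boundaries (Lewis–Nyström), $\gr u_K$ is nondegenerate and varies continuously with $K$ in Hausdorff distance. The neighborhood $\mathcal N$ will be taken small enough that these estimates hold uniformly.

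\emph{Homogeneity and a Hadamard formula.} We first show that $\T$ is homogeneous of degree $n-p+1$ under dilations, up to the rescaling built into the construction; this explains the exponent $1/(n-p+1)$. Next we compute the first variation of $\T(K+tL)$. Writing it in terms of support functions on $\Snn$, we obtain $\frac{d}{dt}\T = c\int_{\Snn} h_L\, d\mu_K$, where $\mu_K=(\g_K)_*\om_K$. This uses the Hadamard variational formula for $p$-harmonic functions and an integration by parts based on the Rellich-type identity for the $p$-Laplacian.

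\emph{Reduction to a Minkowski-type quadratic inequality.} Concavity of $\T^{1/(n-p+1)}$ along $K_0+tL$ is equivalent to
\begin{equation*}
\T\,\T'' \le \tfrac{n-p}{n-p+1}\,(\T')^2 .
\end{equation*}
By approximation, it suffices to work with smooth, strictly convex $K$. We then express $\T''$ through the linearized operator: the derivative $\dot u$ solves $\dv(A(\gr u)\gr\dot u)=0$, where $A$ is the linearization of $|\xi|^{p-2}\xi$. The boundary data of $\dot u$ is $-\dot h\circ\g\,|\gr u|$. The second variation then becomes a quadratic form in $\dot h$ on the sphere, consisting of an operator of the form $\mathcal C$ (the cofactor of the Hessian of $h$) plus a Dirichlet-to-Neumann term for the linearized equation. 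The required inequality becomes a spectral statement: the quadratic form must be nonpositive on functions orthogonal to $h$, and linear functions (translations) must lie in its kernel.

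\emph{Main obstacle.} The hardest step is this spectral inequality. We would first verify it at $K_0$ when $K_0$ is a ball, where everything is radial and the condition $2<p<n+1$ should appear explicitly in the eigenvalue comparison on spherical harmonics. For a general $K_0$ we would not attempt a global proof. Instead we would use an openness argument: at $K_0$ we establish strict negativity modulo the kernel (translations and dilations), using the Hilbert–Brunn–Minkowski operator method with the cofactor matrix $\cof$, and then argue that this strict negativity persists for $K$ in a small $C^2$-neighborhood. This is why the result is only local, which is exactly the reason for $\mathcal N$. Finally, integrating the concavity along the segments $(1-\lambda)K_1+\lambda K_2\in\mathcal N$ gives \eqref{eq:bm0}.
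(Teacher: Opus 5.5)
\textbf{The decisive step is missing.} Everything in your plan rests on the claim that, for every $K$ near $K_0$, the second-variation form of $\T$ is nonpositive on $\ker D\T=\{v:\int_{\Snn}v\,d\mu_K=0\}$. (That is the correct subspace, by the Euler identities, not ``functions orthogonal to $h$''.) You do not prove this at any body.

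\begin{itemize}
\item The ball case is only announced (``should appear''). Even if it were done, an openness argument only transports a strict spectral gap from a body where it is already known to bodies close to it. That would give the result for $K_0$ near a ball, not for an arbitrary $K_0$.
\item Saying that at a general $K_0$ you ``establish strict negativity modulo the kernel using the Hilbert method'' does not fill this. Hilbert's argument deforms $K$ to a ball and needs the kernel to stay exactly the trivial one along the whole path. For mixed volumes this comes from the special algebraic structure of the cofactor (mixed-discriminant) operator. Here the form also contains a nonlocal Dirichlet-to-Neumann term for the linearized $p$-Laplacian, with no such structure. The nondegeneracy you need amounts to infinitesimal uniqueness for the $p$-harmonic Minkowski problem, which the paper explicitly leaves open.
\item You give no mechanism by which $2<p<n+1$ yields the right sign. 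Jerison's observation that the analogous functional for harmonic measure can violate Brunn--Minkowski shows the sign is not automatic.
\item $K_0$ is an arbitrary convex body, for instance a polytope. A $C^2$-openness argument centred at $K_0$ is therefore unavailable, and neighbourhoods obtained for smooth approximants of $K_0$ may shrink to nothing.
\end{itemize}

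\textbf{The setup does not match the statement.} The statement fixes an arbitrary $\om_0$, that is, an arbitrary $u_0$ that is $p$-harmonic in $\mathring K_0\cap N$ (inside $K_0$) and vanishes on $\del K_0$. You replace it with a specific exterior, capacitary-type function on $(K_0+B)\mns K_0$, which generates a different measure. Even for a ball $K_0$, the given $u_0$ need not be radial. The paper keeps the given $u_0$ and shrinks $N$ so that its inner boundary is the convex level set $\{u_0=\eps_0\}$, making the inner datum constant.

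Your family $u_K$ on $(K+K')\mns K$ with $K'$ fixed is also not covariant under dilations. So $\T(\lambda K)=\lambda^{n-p+1}\T(K)$ fails for it, and with it go the Euler identities needed to reduce concavity of $\T^{1/(n-p+1)}$ to a sign condition on $\ker D\T$.

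\textbf{How the paper proceeds instead.} It never uses second variations.
\begin{itemize}
\item It rewrites $\lap_p u$ through the support functions of the level sets.
\item It uses the Colesanti--Salani matrix inequalities to show that the supremal convolution is a subsolution, so $\Om_s(K_\lambda)\supseteq(1-\lambda)\Om_s(K_1)+\lambda\Om_s(K_2)$.
\item It uses the Lewis--Nystr\"om boundary Harnack inequality to express $\T$ as a limit of $\int_{\del\Om_s}(u/\mathrm{dist}(x,\del K))^{p-1}\,d\h^{n-1}$. This is where $p>2$ enters, through an $O(s^{p-2})$ error term.
\item It compares distances via projection identities and a Gauss-curvature comparison to get $\T(K_\lambda)\geq\min\{\T(K_1),\T(K_2)\}$.
\item It upgrades this to the stated inequality by local homogeneity, which uses $n-p+1>0$.
\end{itemize}
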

%The following asserts uniqueness of the corresponding Minkowski problem.
%\begin{theorem}\label{thm:pharminkunique}
%Given a finite regular Borel measure $ \mu $ on $\mathbb{S}^{n-1}$ satisfying conditions
%\begin{equation*}
%\begin{aligned}
%   \int_{\Snn} | \inp{\zeta}{ \xi} | \, d \mu (  \xi )  
%>  0, \quad\forall\   \zeta \in \Snn \quad\text{and}\quad   
% \int_{\Snn} \xi \, d \mu ( \xi )  = 0, 
%\end{aligned}
%\end{equation*}
% there exists a bounded convex domain $\Om$ 
%with non-empty interior that is locally unique up to translation, 
%such that $(\g_\Om)_* \om = \mu$ and $\T(\Om)=1$, 
%where $\g_\Om$ is the Gauss map, $\T(\Om)=\T(\bar\Om)$ is as in \eqref{eq:bmfunc} and 
%$\om=\om_\Om$ is any $p$-harmonic measure on $\del\Om$, for $2<p<n+1$.
%\end{theorem}
 Here we provide a brief overview on the technical novelty and key ingredients in the proof of the above theorem. The first step towards the proof is to express the $p$-Laplacian on convex rings in terms of the quasi-convex function formed out of the support function on sublevel sets. This is done in Proposition \ref{prop:plaphu} following the techniques of \cite{CS}; our proof is invariant of choice of coordinates. 
This is done so that linearity of support functions with respect to Minkowski addition can be used to generate a subsolution on the convex combination set in the next subsection. 
 The next and most important step is a limiting characterization of the functional $\T$ 
 that is devoid of the gradient. In the Capacitary case of \cite{CS}, the identity 
 $$ \mbox{Cap}_{p}(\{u>0\})= c(n,p) \left(\lim_{|x|\to \infty}\frac{u(x)}{G(x)}\right)^{p-1}$$ with $G(x)=1/|x|^\frac{n-p}{p-1}$ and $u$ as the $p$-capacitary function of $\{u>0\}$, follows relatively easily, as the capacitary function asymptotically decays like $G$ at infinity. However, it is not so in our case for twofold reasons; firstly our functional has a degeneracy in the weight of the full gradient and secondly, the boundary on which the $p$-harmonic function vanishes is finite. It requires deeper analysis on the boundary behavior of $p$-harmonic functions, which, thankfully, has been previousy carried out by Lewis and Nystr\"om in \cite{LN1}. Resting on a boundary Harnack inequality for $p$-harmonic functions shown by Lewis-Nystr\"om \cite{LN1}, we prove the limiting characterization
 $$  \T(\{u>0\}) = c\big(n,p,\gamma,\diam(\{u>0\})\big)\lim_{s\to 0^{+}}\int_{\{u=s\}} \left(\frac{u(x)}{\mathrm{dist}(x,\partial\Omega)}\right)^{p-1} d\mathcal{H}^{n-1}(x) $$
in Proposition \ref{prop:limchar}, where $\gamma$ is a modulus of convexity. This characterization complicates the proof of the Brunn-Minkowski inequality signifcantly. 
In the capacitary case, the denominator $G$ remains fixed in the limit; we encounter an opposite situation as the numerator within the surface integral in our case remains fixed (equals $s$). The proof of the theorem is achieved from a completely new geometric argument to prove Theorem \ref{thm:bm1} using projection identities related to the distance function. These tools can also be used in other similar problems with $p$-harmonic functions vanishing on outer boundaries of finite convex rings. 

Lastly, we remark that the case for equality seems to be difficult also due to 
the structure of the above characterization. It adds to the difficulty for the case of local uniqueness for the corresponding Minkowski problem which is already involved due to the non-uniqueness of the $p$-harmonic functions prescribing the measures. These are a part of our ongoing work. 

\section{Preliminaries}\label{sec:prelim}

Here, we review well-known properties of convex domains,  $p$-harmonic measures and Minkowski problems. For $n\geq 1$, points in $\R^n$ are denoted as
$ x = ( x_1,
 \dots,  x_n) =\sum_{i=1}^n x_ie_i$ where $\{e_1,\ldots,e_n\}$ is the standard basis. The standard inner
product on $ \mathbb R^{n} $ are denoted as $\inp{\cdot}{\cdot} $ and the Euclidean norm as $|\cdot|$. 
For functions $f:\R^n\to \R$ and $F= (f_1, \ldots, f_m):\R^n\to \R^m$, the gradient defined by $\gr f = \sum_{i=1}^n (\del_{i} f) e_i$, the Jacobian 
defined by $ DF = \sum_{i=1}^m\sum_{j=1}^n (\del_{i} f_j) e_i\otimes e_j$ and 
the Hessian is defined by
$ D^2f =D(\gr f)= \sum_{i,j=1}^n (\del^2_{i,j} f) e_i\otimes e_j$.  For any $A\subset\R^n$, its topological closure and interior shall be denoted as $\bar A$ and 
$\mathring A$ and $\del A=\bar A\mns \mathring A$. 
%In general, hyperplanes in $\R^n$ shall be denoted as 
%\begin{equation}\label{eq:hyp}
%H_{ y,\alpha} := \set{x\in \R^n}{\inp{x}{ y}= \alpha},
%\end{equation}
% for some $ y\in \R^n$ and $\alpha\in \R$; the half-spaces corresponding to $\inp{x}{ y}< \alpha$ and $\inp{x}{ y}> \alpha$ shall be respectively denoted as 
% $H_{ y,\alpha}^-$ and $H_{ y,\alpha}^+$. Thus, we have $H_{ y,\alpha}^-=H_{ -y,-\alpha}^+,\ H_{ y,\alpha}^+=H_{-y,-\alpha}^-$ and 
% $H_{ y,\alpha}=H_{-y,-\alpha}$, for any $ y\in \R^n$ and $\alpha\in \R$. 
% In particular, $H_{e_n, 0}^+$ is denoted by $\R^n_+$. 

Given a measure space $(X, \mu)$ and 
a map $f:X\to Y$, the pushforward measure $f_*\mu$ on $Y$ is defined on any measurable subset $ E\sub Y$ as
$$ (f_* \mu)(E) = \mu(\inv{f}(E)),$$
which is absolutely continuous with respect to $\mu$ and in the infinitesimal form, for any function $g:Y\to \R$ it is written as 
$g\, d(f_* \mu)= (g\circ f) d\mu$. We shall denote $\h^k$ as the $k$-dimensional Hausdorff measure, $B_r(x)$ as the standard metric ball in $\R^n$ and 
$\Snn =\del B_1(0)$. We shall denote $\upomega_n=\h^{n-1}(\Snn)$ as the surface area of the unit sphere. 
%For any Borel set $E\sub \R^n$, the 
%$k$-dimensional Hausdorff measure on $\R^n$ is defined by 
%\[
%\h^{k}(E)=\lim_{\delta\to 0^+} \inf\Big\{\sum_{j} r_j^{k} \, : \, E\subset\bigcup_j B_{r_j}(x_j), \, \, r_j\leq \delta\Big\},
%\] 
%where $B_r(x)= \{y\in \R^n : |y-x|<r\}$ is the standard metric ball of radius $r$ centered at $x\in \R^n$. 
%The distance of $E$ from a point $y \in \R^n$ is defined by 
%$\dist(y, E)=\inf\{|x-y|\, :\, x\in E\}$. 
%The unit sphere $B_1(0)$ of $\R^n$ shall be denoted as $ \Snn$. 
%We have the transformation to polar coordinates  
%$\R^n\mns\{0\} \to (0,\infty)\times \Snn$ as $x\mapsto (|x|, x/|x|)$ and its inverse
%$(r, \theta) \mapsto r \theta $. 
%For any $ F\in L^1(\R^n) $ we have 
%\begin{equation}\label{eq:polarint}
%\int_{\R^n} F(x) dx = \om_n \int_0^\infty \bigg[\int_{\Snn} F(r \theta)r^{n-1}d \theta\bigg] dr,
%\end{equation}
%where the Lebesgue measure $\mathcal L^n$ of $\R^n$ and the uniform measure $\h^{n-1}$ on $\Snn$
%are abbreviated as $dx$ and $d \theta$ in infinitesimal form and $\om_n=2\pi^{n/2}/\Gamma(n/2)$ is the surface area of $\Snn$. 
%%since for any $B_r= B_r(x)$, we have $d(\Theta_*\mathcal L^n)\on_{B_r} = dr \,dS(r, \xi)$ where the surface measure of $\del B_r$ given by $dS(r, \xi) = \om_n r^{n-1}d \xi$ with $\om_n=2\pi^{n/2}/\Gamma(n/2)$ is the surface measure 
%%of $\Snn$. 

\subsection{Convex domains and support functions}\label{subsec:convsupp}
%We shall denote $\Om\subset \R^n$ as a convex domain, i.e. open and connected non-empty convex subset and $K\subset \R^n$ as a convex body, i.e. closure of a bounded convex domain, hence a compact convex subset with non-empty interior. 
We recall some notions and properties of convex functions and domains. For further details we refer to Schneider \cite{Sc}, Gardner \cite{G}, etc. 

For any $A\subset \R^n$ , the {support function} $h_A: \R^n \to \R$ is defined as 
\begin{equation}\label{eq:suppf}
h_A( y) = \sup \set{\inp{x}{ y} }{ x\in A}.
\end{equation}
Thus, we have $h_K( y) = \max_{x\in K}\inp{x}{ y}$ for a compact convex subset $K\subset\R^n$. Note that $h_K$ is sub-linear in general and linear (i.e. $h_K( y)= \inp{x}{ y}$ for every 
$ y\in \R^n$) iff $K=\{x\}$. The {supporting hyperplane} of $K$ with outer normal $ y\in  \R^n\mns\{0\}$, is given by 
\begin{equation}\label{eq:supphp}
H_K ( y):= \set{x\in \R^n}{\inp{x}{ y}= h_K( y)}.
\end{equation}
%and $K \subset H_{ y,h_K( y)}^-=\set{x\in \R^n}{\inp{x}{ y}\leq h_K( y)}$ (called {supporting half-space}). 
Thus, $h_K$ is differentiable at $ y\in \R^n\mns\{0\}$ with $\gr h_K( y)=x$ if and only if $K\cap H_K( y)=\{x\}$ and in this case $K$ is called {strictly convex}. Note that $h_K$ is homogeneous of degree $1$ and hence $ y\mapsto \gr h_K( y)$ is homogeneous of degree $0$, i.e. 
\begin{equation}\label{eq:homgradhom}
 h_K (\lambda  y) = \lambda h_K( y)\quad\text{and}\quad
\gr h_K (\lambda  y) = \gr h_K( y) \qquad \forall\ y\in \R^n\mns\{0\},\ \lambda\geq 0.
\end{equation}
The support functions are sub-linear (sub-additive and $1$-homogeneous) and also conversely, all sub-linear functions are support functions for a corresponding domain, see \cite[Theorem 1.7.1]{Sc}. 
%\begin{theorem}\label{thm:subsupp}
%For any sublinear function $h:\R^n\to \R$, there exists a unique convex body $K=\{x\in \R^n : \inp{x}{ y}\leq  h( y)\ \forall\  y\in\R^n\}$ such that $h_K= h$. 
%\end{theorem}
%Due to homogeneity, it suffices to define the set 
%$K=\{x\in \R^n : \inp{x}{ \xi}\leq  h( \xi)\ \forall\  \xi\in\Snn\}$ in Theorem \ref{thm:subsupp} having $h$ as its support function and henceforth, it suffices to consider support functions restricted to $\Snn$ for all things and purposes. 
This leads to characterizing convex set $K$ as
\begin{equation}\label{eq:kinthk}
K = \bigcap_{ \xi\in \Snn} \{x\in \R^n : \inp{x}{ \xi}\leq  h_K( \xi)\};
\end{equation}
thus convex bodies correspond to Wulff shapes of their support functions. 
Also note that \eqref{eq:kinthk} implies 
$K_1\sub K_2$ iff $h_{K_1}\leq h_{K_2}$. 
%Now, alongside having supporting hyperplanes with a fixed normal, one can have a supporting hyperplane at a fixed point in the boundary. 
%These two notions are related to each other via projection.
%Indeed, 
%from \eqref{eq:projcond}, we note that $h_K(z-p_K(z))=\inp{p_K(z)}{z-p_K(z)}$ for any $z\in \R^n\mns K$ and hence, we have  
%\begin{equation}\label{eq:supproj}
%h_K( \xi_K(z))=\inp{p_K(z)}{ \xi_K(z)}, 
%\end{equation}
%with $ \xi_K(z)$ is as in \eqref{eq:gradist}. 
%Therefore, for any $x\in \del K$, their exists a supporting hyperplane of $K$ at $x$ given by 
%\begin{equation}\label{eq:supphp1}
%H_K[x] = \set{x'\in \R^n}{\inp{x'-p_K(z)}{z-p_K(z)}= 0}, \quad\text{with}\ z\in \inv{p_K}(x);
%\end{equation}
%which is related to \eqref{eq:supphp} by $H_K[x]=H_K(z-p_K(z))$ from \eqref{eq:supproj}. Thus, from homogeneity of support functions, $H_K[x]= H_K( \xi_K(z))$ 
%for any $z\in \inv{p_K}(x)$ where $ \xi_K(z)$ is as in \eqref{eq:gradist} and moreover, $\del K$ is smooth at $x\in \del K$ iff 
%$H_K[x]$ is unique with a unique outer unit normal.  
Recall that the {Minkowski sum} of $E,F\sub \R^n$ is defined by 
\begin{equation}\label{eq:minksum}
 E  +  F  := \{ x + y :  x \in E, y \in F   \}; 
\end{equation}
and the scalar multiplication $ c E :=  \{ c y : y \in E\} $ for any $c\in \R^n$. It is not hard to check that if $E, F$ are convex then so is $E+F$ and $\alpha E$ for all $\alpha\geq 0$. 
Furthermore, for compact convex sets, the decomposition to Minkowski sums is unique, i.e. if $E+K_1=E+K_2$ then $K_1=K_2$, which allows the definition of $E  -  F  = \{ x - y :  x \in E, y \in F   \}$ as well. 
In particular, we note that $ B_r(x)= \{x\}+ r\,B_1(0)$ is the standard Euclidean metric ball. 
Most importantly, the support functions are Minkowski additive, i.e. for compact convex sets $E, F\subset \R^n$, we have 
\begin{equation}\label{eq:suppminkadd}
h_{\alpha E+\beta F} = \alpha h_E + \beta h_F \qquad\forall\ \alpha, \beta\geq 0, 
\end{equation}
and hence, for supporting hyperplanes as in \eqref{eq:supphp}, $H_{\alpha E+\beta F}(\cdot) = \alpha H_E (\cdot)+ \beta H_F(\cdot)$ holds as well. 
The classical Brunn-Minkowski inequality is given by
\begin{equation}\label{eq:bm}
|E+F|^{1/n} \geq |E|^{1/n}+ |F|^{1/n}, 
\end{equation}
for any measurable $E,F\sub \R^n$, with equality if $E$ and $F$ are homothetic.

The notion of smoothness of boundaries is standard, a domain $\Om\subset \R^n$ is said to be of class $C^k$ (resp. $C^{k,\alpha}$ for $k\in \N$ and $0<\alpha\leq 1$) if 
%	for any 
%	$x\in \del \Om$, there exists $r>0$ and a bijective $\psi: B_r(x)\to \R^n$ with $\psi, \inv{\psi} \in C^k$ (resp. $C^{k,\alpha}$) such that 
%	$\psi (B_r(x)\cap \Om) \subset \R^n_+$ and $\psi (B_r(x)\cap \del\Om) \subset \del\R^n_+$; this is equivalent to stating that for any $x\in \del \Om$ there exists a neighborhood $U$ and 
the domain can be expressed locally as a sub-graph of a function 
$\phi\in C^{k}(\R^{n-1})$ (resp. $C^{k,\alpha}(\R^{n-1})$) and the boundary as a graph of it, 
after a possible rotation.
%we have
%\begin{equation}\label{eq:omlocal}
%\begin{aligned}
%\Om\cap U &= U\cap \set{(x_0,x')\in\R\times \R^{n-1}}{\phi(x')<x_0};\\
%\del\Om\cap U &= U\cap \set{(x_0,x')\in\R\times \R^{n-1}}{\phi(x')=x_0}.
%\end{aligned}
%\end{equation}
If $\del K$ is of class $C^1$ for a convex domain $K$, then for every $x\in \del K$ there exists a normal vector at $x$ unique up to scaling. This gives rise to the {Gauss map}, 
$\g_K : \del K\to \Snn$, where $\g_K(x)$ is the outer unit normal at $x\in \del K$ and in terms of support function, we have
\begin{equation}\label{eq:supg}
h_K (\g_K(x)) = \inp{x}{\g_K(x)}, \quad \forall\ x\in\del K. 
\end{equation}
%In particular, for balls we have $\g_{B_r(x_0)} : \del B_r(x_0)\to \Snn$ given by $\g_{B_r(x_0)}(x) = (x-x_0)/r$ for any $x_0\in \R^n$ and $r>0$.  
Recall that the tangent space at $x\in\del K$ is given by 
$ T_x(\del K)= \set{y\in \R^n}{\inp{y}{\g_K(x)}=0} $
and from \eqref{eq:supg}, %we observe that $H_K(\g_K(x))= \{x\}+T_x(\del K)$. 
%The boundary $\del K$ being a $C^1$-manifold, 
we have $\g_K\in C^1(\del K, \Snn)$ and
%some familiar notions from differential geometry. 
%Since $T_x(\del K)=T_{\g_K(x)}(\Snn)$, the following is 
the {Weingarten map} is defined by 
$$\Wg_K(x):= d\,\g_K(x) : T_x(\del K)\to T_x(\del K).$$ The {principal curvatures} are the eigenvalues 
$\kappa_1(x),\ldots, \kappa_{n-1}(x)$ of $\Wg_K(x)$ 
%the {mean curvature} and the 
and the {Gaussian curvature} %are respectively 
is given by
$$ %H(x)=\frac{\Tr(\Wg_K(x))}{n-1}
%= \frac{1}{n-1}\big(\kappa_1+\ldots+ \kappa_{n-1}\big)\quad \text{and}\quad
 \kr_{\del K}(x)=\det(\Wg_K(x))= \kappa_1(x)\ldots\kappa_{n-1}(x).$$  
If $K$ is strictly convex then $K\cap H_K(\g_K(x))= \{x\}$ for any $x\in \del K$ and $\inv{\g_K}:\Snn\to \del K$ is well defined; moreover, 
$h_K$ being differentiable everywhere in $\R^n\mns\{0\}$, from \eqref{eq:supg} we have 
\begin{equation}\label{eq:hgrh}
h_K( \xi) = \inp{ \xi}{\inv{\g_K}( \xi)} \quad\text{and}\quad \gr h_K( \xi) = \inv{\g_K}( \xi), \quad\forall\  \xi\in \Snn. 
\end{equation}
A convex set $K$ is said to be of class $C^k_+$ 
%(resp. $C^{k,\alpha}_+$) 
for any $k\in \N$,  %\alpha\in (0,1]$ 
if $\del K$ is of class $C^k$ %(resp. $C^{k,\alpha}$) 
and the Gauss map 
$\g_K : \del K\to \Snn$ is a diffeomorphism. 
%In this case, the Weingarten map is of maximal rank everywhere, the Gaussian curvature is positive and $K$ is called {strongly convex}.
Given any convex set $K$, there exists a nested sequence $K_i$ of $C^2_+$ convex sets with $K_{i+1}\subset K_i$ such that  
\begin{equation}\label{eq:c2ap}
K=\bigcap_{i=1}^\infty K_i,
\end{equation}
(see \cite{Sc}), so restricting to the $C^2_+$ class is sufficient for the arguments of most parts. 
If $K$ is of class $C^2_+$, note that $\inv{\g_K}\in C^1(\Snn, \del K)$ and $h_K\in C^2(\R^n\mns\{0\})$. Hence, 
from homogeneity of $h_K$, \eqref{eq:hgrh} and \eqref{eq:homgradhom}, we have 
\begin{equation}\label{eq:hgradh}
h_K( y) = \inp{ y}{\gr h_K( y)}\quad \text{and}\quad D^2h_K( y)  y =0, \qquad\forall\  y\in \R^n\mns\{0\}, 
\end{equation}
where the second equality of the above follows from the first. 
The inverse Weingarten map is denoted as 
$$\inv{\Wg_K} ( \xi):= \inv{(\Wg_K(\inv{\g_K}( \xi)))}= d\,\inv{\g_K}( \xi) : T_ \xi (\Snn) \to T_ \xi (\Snn)$$ defined for all $ \xi\in \Snn$, which is the non-singular part of $D^2h_K( \xi)$. 
For any $ \xi\in \Snn$ being fixed, there exists an orthonormal basis $\{e^1,\ldots,e^{n-1},  \xi\}$ of $\R^n$ and hence, $\{e^i\}$ span the tangent space 
$T_\xi (\Snn)$. Any $x\in \R^n$ in this basis is given by 
\begin{equation}\label{eq:basis}
x= \sum_{k=1}^{n-1}x^k e^k + \inp{x}{ \xi} \xi\qquad\text{with}\quad x^k= \inp{x}{e^k}.
\end{equation}
Since $D^2h_K ( \xi)  \xi =0$ from \eqref{eq:hgradh}, the only
non-zero entries of $D^2h_K ( \xi)$ are $\inp{D^2 h_K( \xi) e^j}{e^i}$ for $i,j\in\{1, \ldots, n-1\}$ which, from \eqref{eq:hgrh}, are 
the entries of $d \gr h_K( \xi)= d\,\inv{\g_K}( \xi)=\inv{\mathcal W_K}( \xi)$, with respect to the above basis. In other words, we have %$D^2 h_K( \xi) = \sum_{i,j=1}^{n-1} \inp{\inv{\mathcal W_K}( \xi) e^j}{e^i} e^i \otimes e^j$
\begin{equation}\label{eq:d2wing}
D^2 h_K( \xi) = \sum_{i,j=1}^{n-1} \inp{\inv{\mathcal W_K}( \xi) e^j}{e^i} e^i \otimes e^j. 
\end{equation}
Thus, $D^2 h_K( \xi)$ has eigenvalues $\{1/\kappa_1(\inv{\g_K}( \xi)),\ldots, 1/\kappa_{n-1}(\inv{\g_K}( \xi)), 0\}$, where $\kappa_i$'s are the principal curvatures of $\del K$. With $\xi\in U\subset \Snn$ and a coordinate chart $\varphi:U \to V\subset \R^{n-1}$, the covariant derivatives of $f: \Snn \to \R$ of first and second orders are locally defined by 
\begin{equation}\label{eq:cov}
\begin{aligned}
(i)&\ \cov f:=\sum_{i=1}^{n-1}(\cov_i f) \, e^i, \quad \text{where}\quad \cov_if (x):= \del_i (f\circ \inv{\varphi})(\varphi(x)),\\
(ii)&\ \covtwo f:=\sum_{i,j=1}^{n-1} (\cov_{i,j} f)\, e^i \otimes e^j,  \quad \text{where}\quad \cov_{i,j}f (x):= \del_{i,j} (f\circ \inv{\varphi})(\varphi(x)),
\end{aligned}
\end{equation}
%Letting $\chi=\inv{\varphi}: V\to U\subset\Snn$, note that since $|\chi|^2=1$ hence by differentiating successively we have 
%$\inp{\chi}{\del_j\chi}=0$ and $\inp{\chi}{\del_{i,j}\chi}=-\inp{\del_i\chi}{\del_j\chi}$. We can choose $U, \varphi$ such that
%$\chi(0)=\xi$ and that $\del_j\chi(0)= e^j$ for all $j\in \{1,\ldots, n-1\}$ taking
In particular, for the choice of $U=\Snn_+=\big\{x\in \R^n: \inp{x}{\xi}=\sqrt{\scriptstyle{1-\sum_{i=1}^{n-1}|x^i|^2}} \big\}$ (or the other hemisphere $U=\Snn_-$ corresponding to the negative square root)
and $\varphi=(x^1,\ldots, x^{n-1})$, 
%leading to $\chi (z_1,\ldots, z_{n-1})= 
%\sum_{i=1}^{n-1}z_i e^i +\sqrt{\scriptstyle{1-\sum_{i=1}^{n-1}|z_i|^2}} \, \xi$. 
%Thus, we have $\del_j\chi = e^j- (z_j/ \sqrt{\scriptstyle{1-\sum_{i=1}^{n-1}|z_i|^2}} )\, \xi$ and $\del_{i,j}\chi(0)=-\delta_{i,j}\chi(0)$  
taking $h_K$ restricted to $\Snn$, \eqref{eq:hgradh} can be written in terms of local coordinates 
%as 
%\begin{equation}\label{eq:hgradhcov}
%h_K\circ \chi =\inp{\chi}{\gr h_K\circ \chi}\quad\text{and}\quad (D^2 h_K\circ \chi)\chi =0. 
%\end{equation}
%By differentiating and using the above, we get
%$$\del_j(h_K\circ \chi)
%= \inp{\del_j\chi}{\gr h_K\circ \chi}+\inp{\chi}{(D^2 h_K\circ \chi)\del_j\chi}=\inp{\del_j\chi}{\gr h_K\circ\chi}.$$
%Evaluating at $0$, we get $\cov_j h_K(\xi) = \inp{\gr h_K(\xi)}{e^j}$. 
which, together with \eqref{eq:basis} and \eqref{eq:d2wing}, %and \eqref{eq:hgradhcov}, 
leads to
\begin{equation}\label{eq:grhcov}
\gr h_K ( \xi)= h_K( \xi) \xi + \cov h_K( \xi), \quad\text{and}\quad 
 \inv{\mathcal W_K}( \xi) = h_K( \xi)\I +\covtwo  h_K ( \xi),
\end{equation}
%By differentiating twice at $0$ and using the choice of chart, we get 
%\begin{align*}
%\del_{i,j}(h_K\circ \chi)(0)&= \inp{\del_{i,j}\chi(0)}{\gr h_K\circ \chi(0)}+ \inp{\del_j\chi(0)}{(D^2 h_K\circ \chi)(0) \del_i\chi(0)}\\
%&= -\delta_{i,j} \inp{\chi(0)}{\gr h_K(\chi(0))}+ \inp{e^j}{(D^2 h_K(\chi(0)) e^i}
%\end{align*}
%and from \eqref{eq:hgradh}  we get 
%\begin{equation}\label{eq:cov2h}
%\covtwo  h_K ( \xi) = \inv{\mathcal W_K}( \xi) - h_K( \xi)\I.
%\end{equation}
see \cite{Ak-Muk, CNSXYZ}, etc. 
Therefore, for $C^2_+$ domains, we have 
\begin{equation}\label{eq:detkinv}
 \det\big(\covtwo  h_K( \xi) + h_K( \xi) \I\big) = \det (\inv{\mathcal W_K}( \xi))= 1/\kr(\inv{\g_K}( \xi)), 
\end{equation}
where $\kr$ is the Gaussian curvature. Hence, 
%One can show that a domain is $C^2_+$ iff it is $C^2$ and $\kr >0$. 
from transformation rule of the Jacobian, 
% $$(\g_K)_*\h^{n-1}\on_{\del K}= |\det(d\,\inv{\g_K})| \h^{n-1}\on_{\Snn}=1/(\kr\circ \inv{\g_K}) \h^{n-1}\on_{\Snn}$$ and from \eqref{eq:detkinv}, for any $f \in L^1(\del K, \h^{n-1})$, 
 we have 
\begin{equation}\label{eq:intbd}
\int_{\del K} f(x) \, d\h^{n-1}(x)= \int_{\Snn} f(\inv{\g_K}( \xi)) \det\big(\covtwo  h_K( \xi) + h_K( \xi) \I\big) \, d \xi. 
\end{equation}
As in \cite{Ak-Muk}, we denote 
$\cof[A]= \text{cofactor matrix of}\  A$ so that, from \eqref{eq:detkinv},
\begin{equation}\label{eq:cofdet}
\cof[\covtwo  h_K + h_K \I] (\covtwo  h_K + h_K \I) = \det(\covtwo  h_K + h_K \I)\I = 1/(\kr \circ \inv{\g_K})\I.
\end{equation}
Since $\inv{\mathcal W_K}= \covtwo  h_K + h_K \I$ from \eqref{eq:grhcov}, hence \eqref{eq:cofdet} 
can be rewritten as 
\begin{equation}\label{eq:cofwin}
\mathcal W_K(x) = \kr (x) \,\cof[\covtwo  h_K + h_K \I](\g_K(x)), \qquad \ \forall\ x\in \del K. 
\end{equation}
We state following lemma from \cite[Lemma 3.44]{Ak-Muk}.
\begin{lemma}\label{lem:grad2u} 
Let $u\in C^2(\del\Om)$ and $\{e^1,\ldots, e^{n-1}\}$ be an orthonormal frame field of $\Snn$ such that 
for any $ \xi\in \Snn$ the unit vectors $e^i=e^i( \xi)\in \Snn$ span the tangent space $T_ \xi (\Snn)$. The covariant derivatives being defined as in \eqref{eq:cov} with respect to the suitable local coordinate charts related to the frame, 
we have the following:  
\begin{enumerate}
\item $\inp{D^2 u(F( \xi)) e^i}{e^j} = -\kr(F( \xi)) |\gr u(F( \xi))| \,\cof_{i,j}[\covtwo  h + h \I]$;
\item $\inp{D^2 u(F( \xi)) \xi}{e^i} = -\kr(F( \xi)) \sum_{j}\cof_{i,j}[\covtwo  h + h \I]\,\cov_j (|\gr u(F( \xi))|)$;
\end{enumerate}
where $\cof_{i,j}[\cdot]= \inp{\cof[\cdot] e^j}{e^i}$ are entries of the cofactor matrix as in \eqref{eq:cofdet} 
for $i,j\in \{1,\ldots, n-1\}$ with respect to this frame 
and $F( \xi)= \inv{\g_\Om}( \xi)= \gr h_\Om( \xi) $. 
\end{lemma}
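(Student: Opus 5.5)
The plan is to differentiate, along $\Snn$, the two identities that hold on $\del\Om$ for the boundary parametrization $F=\inv{\g_\Om}=\gr h_\Om$. Throughout, $\Om$ is of class $C^2_+$, $u$ is $C^2$ up to $\del\Om$ with $u=0$ on $\del\Om$, $u>0$ in $\Om$ and $\gr u\neq 0$ on $\del\Om$. Under these hypotheses $\gr u$ is parallel to the outer normal at every boundary point and points inward, so for every $\xi\in\Snn$
\begin{equation*}
u(F(\xi))=0,\qquad \gr u(F(\xi)) = -|\gr u(F(\xi))|\,\xi .
\end{equation*}
The second identity is the one I will differentiate; the first only justifies it.

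Fix $\xi$ and use the hemisphere chart of \eqref{eq:cov} centred at $\xi$, with coordinates $x^i=\inp{x}{e^i}$. Then $\inv{\varphi}$ has partial derivatives $e^1,\dots,e^{n-1}$ at $\varphi(\xi)=0$. Consequently, at the point $\xi$, the covariant derivative $\cov_j$ of a function on $\Snn$ is the directional derivative along $e^j$. Moreover, by \eqref{eq:hgrh} and \eqref{eq:d2wing}, $d F(\xi)e^j=D^2h(\xi)e^j=\inv{\Wg}(\xi)e^j$, and by \eqref{eq:grhcov} this equals $\sum_k (\covtwo h+h\I)_{kj}\,e^k$, which is tangent to $\Snn$.

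Differentiating the gradient identity along $e^j$ with the chain rule gives
\begin{equation*}
D^2u(F(\xi))\,\inv{\Wg}(\xi)e^j = -\cov_j\big(|\gr u(F(\xi))|\big)\,\xi - |\gr u(F(\xi))|\,e^j .
\end{equation*}
I then pair this with $e^i$ and with $\xi$, and write $M=\covtwo h+h\I$, a symmetric positive definite $(n-1)\times(n-1)$ matrix. In both pairings I use that $D^2u$ is symmetric. Pairing with $e^i$ gives
\begin{equation*}
\sum_k M_{kj}\inp{D^2u\, e^k}{e^i}=-|\gr u|\,\delta_{ij},
\end{equation*}
so the tangential block $A$ of $D^2u$ satisfies $AM=-|\gr u|\I$. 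Hence $A=-|\gr u|\,\inv{M}=-|\gr u|\,\Wg$. By \eqref{eq:cofdet}--\eqref{eq:cofwin}, $\inv{M}=\kr\,\cof[M]$, and this gives (1). Pairing with $\xi$ gives
\begin{equation*}
\sum_k M_{kj}\inp{D^2u\,\xi}{e^k}=-\cov_j|\gr u|.
\end{equation*}
Inverting $M$, again with $\inv{M}=\kr\,\cof[M]$ and the symmetry of $\cof[M]$, yields (2).

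I expect the main obstacle to be bookkeeping rather than analysis. First, the sign convention $\gr u=-|\gr u|\xi$ must be fixed correctly. Second, the chart derivatives at $\xi$ must really coincide with derivatives along the frame vectors, so that $\cov_j$ and the entries of $M$ are computed in the same basis. Third, $\cof_{i,j}$ versus $\cof_{j,i}$ must be tracked, which is harmless only because $M$ is symmetric. I would check these three points first, on the ball $\Om=B_r(0)$ with $u=r-|x|$. There $h\equiv r$, $\covtwo h=0$, $\kr=r^{-(n-1)}$, $|\gr u|\equiv1$ and $\inp{D^2u\,e^i}{e^j}=-\delta_{ij}/r$, so both sides of (1) and (2) can be compared directly.
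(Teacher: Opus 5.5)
Your proposal is correct and is essentially the argument behind the result the paper relies on. The paper gives no proof of its own and only cites \cite[Lemma 3.44]{Ak-Muk}, noting that items (1)--(2) do not use $p$-harmonicity. Your route is the standard one: differentiate $\gr u(F(\xi))=-|\gr u(F(\xi))|\,\xi$ along the frame at the centre of the hemisphere chart, use $dF(\xi)e^j=\sum_k(\covtwo h+h\I)_{kj}e^k$, and invert via $\inv{(\covtwo h+h\I)}=\kr\,\cof[\covtwo h+h\I]$. You were also right to make explicit the hypotheses the statement leaves implicit: $u=0$ on $\del\Om$, $u>0$ inside, $\gr u\neq 0$, and $u$ of class $C^2$ up to the boundary. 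These are exactly what the paper's remark about ``any smooth functions'' silently requires.
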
 
\begin{remark}
In \cite[Lemma 3.44]{Ak-Muk}, the function $u$ is assumed to be $p$-harmonic but it is evident in its proof that the two assertions of the above works for any smooth functions. The $p$-harmonicity in \cite[Lemma 3.44]{Ak-Muk} is required only for proving a third assertion of computing the last term 
$\inp{D^2 u(F( \xi)) \xi}{\xi}$, which is omitted here in the statement of the above lemma. 
\end{remark}
It is also easy to see that the Gaussian curvature decreases with Minkowski addition at points of respective boundaries with common unit normal i.e. 
\begin{equation}\label{eq:gcminkadd}
\kr_{\del K_1+\del K_2} \leq \kr_{\del K_1}+\kr_{\del K_2},
\end{equation}
which follows from \eqref{eq:detkinv} and supperadditivity of determinant within  positive matrices. 

The {Hausdorff distance} between Borel sets $ E, E' \subset\mathbb R^n $ 
is defined as 
 \begin{equation}\label{eq:defdh}
 d_{\h} ( E, E' ) = \max  \Big(  \sup   \{ \dist ( y, E ) : y \in E' \}\, ,\,   \sup \{ \dist ( y, E' ) : y \in E \} \Big); 
\end{equation}  
 equivalently, we have $ d_{\h} ( E, E' ) = \sup_{y\in \R^n} |\dist ( y, E )-\dist ( y, E' )|$. 
 The Hausdorff distance of convex sets can be characterized by support functions as 
\begin{equation}\label{eq:hdh}
d_\h (E, F)=d_\h (\del E, \del F) = \|h_E -h_F\|_{L^\infty(\Snn)},
\end{equation}
see \cite[Lemma 1.8.1, Lemma 1.8.14]{Sc}. 

\subsection{The $p$-Laplacian and $p$-harmonic measures}\label{subsec:pharm}  
Here we recall $p$-harmonic measures and Minkowski problems and review some previous results in these directions. 

%We refer to \cite{HKM, LN, LLN, T, Lnote, LNV}, etc. for more details. 

The minimizers of the $p$-Dirichlet energy functional $w\mapsto \int_\Om |\gr w|^p \dx $ are weak solutions to the $p$-Laplacian equation $\lap_p u= \dv(|\gr u|^{p-2}\gr u)=0$ in $\Om$, satisfying
$$ \int_\Om |\gr u|^{p-2}\inp{\gr u}{\gr \phi}\dx = 0 ,\qquad \forall\ \phi\in C^\infty_0(\Om),$$
 and are called $p$-{harmonic functions} for $1\leq p<\infty$ which coincide with harmonic functions for $p=2$. The existence of weak solution $u\in W^{1,p}(\Om)$ is classical and follows from direct methods. 
% We have the following monotonicity inequality for any measurable $E\sub \R^n$ and some $c=c(p)>0$, given by 
%\begin{equation}\label{eq:pmon}
%\int_{E}\inp{ |\gr u|^{p-2} \gr u - |\gr v|^{p-2}\gr v}{ \gr u-\gr v}\dx\geq c
%\begin{cases}
%  {\scriptstyle \int_{E} |\gr u -\gr v|^p\dx} \ &\text{if}\ p\geq 2\\
%  \frac{\big(\int_{E} |\gr u -\gr v|^p\dx\big)^{2/p}}{\big(\int_{E}(|\gr u|+|\gr v|)^p\dx\big)^{2/p-1}}\ &\text{if}\ 1<p<2
%\end{cases}
%\end{equation}
%which can be used to show 
The $p$-Laplacian is a monotone operator that satisfies comparison principle and thereby uniqueness, furthermore the weak solutions are locally $C^{1,\alpha}$. 
We collect these in the following and refer to \cite{HKM, GT, Dib, L2, T, Li}, etc. for more details. 
\begin{theorem}\label{thm:plap}
For any bounded domain $\Om\subset \R^n$ and $1<p<\infty$, we have the following:
\begin{enumerate}
\item (Comparison Principle) if $\lap_p v\leq \lap_p u$ in the weak sense on $\Om$ and $u\leq v$ on $\del\Om$ in the trace sense then $u\leq v$ on $\Om$;
\item (Regularity) if $\lap_p u=0$ in the weak sense on $\Om$, then 
$u\in C^{1,\alpha}_\loc(\Om)$ for some $\alpha\in (0,1)$.
\end{enumerate}
\end{theorem}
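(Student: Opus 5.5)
Theorem \ref{thm:plap} collects classical facts, so the plan is to give the short argument for (1) and reduce (2) to the standard regularity theory cited above (\cite{Dib, L2, T}).

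For the comparison principle (1), we take $u,v\in W^{1,p}(\Om)$ with $\lap_p v\leq \lap_p u$ weakly. We test both weak formulations with $\phi=(u-v)^+$. Since $u\leq v$ on $\del\Om$ in the trace sense, $\phi\in W^{1,p}_0(\Om)$, and by density it is an admissible non-negative test function. The weak inequality, read as
\begin{equation*}
\int_\Om \inp{|\gr v|^{p-2}\gr v}{\gr\phi}\dx\geq \int_\Om \inp{|\gr u|^{p-2}\gr u}{\gr\phi}\dx \qquad \text{for all } \phi\geq 0,
\end{equation*}
gives, because $\gr\phi=\gr(u-v)$ a.e. on $\{u>v\}$ and vanishes elsewhere,
\begin{equation*}
\int_{\{u>v\}}\inp{|\gr u|^{p-2}\gr u-|\gr v|^{p-2}\gr v}{\gr u-\gr v}\dx\leq 0 .
\end{equation*}
Strict monotonicity of $\xi\mapsto|\xi|^{p-2}\xi$ for $1<p<\infty$ holds pointwise: the integrand is $\geq 0$, with equality iff $\xi=\eta$. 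It is seen from the elementary inequality $\inp{|\xi|^{p-2}\xi-|\eta|^{p-2}\eta}{\xi-\eta}\geq (|\xi|^{p-1}-|\eta|^{p-1})(|\xi|-|\eta|)$, together with the case analysis of equality. Hence $\gr(u-v)^+=0$ a.e. in $\Om$. Since $(u-v)^+\in W^{1,p}_0(\Om)$ and $\Om$ is bounded, the Poincaré inequality forces $(u-v)^+=0$, that is, $u\leq v$ in $\Om$. Uniqueness of the Dirichlet problem follows by applying this twice.

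For the regularity statement (2), I would not reprove it but outline the standard route. First, local boundedness and Hölder continuity of $u$ follow from De Giorgi--Moser iteration, since the equation has $p$-growth structure. Next, one regularizes, replacing $|\gr u|^{p-2}$ by $(\eps+|\gr u|^2)^{(p-2)/2}$, so that the approximate solutions $u_\eps$ are smooth. One then differentiates the equation and shows that $|\gr u_\eps|$, or suitable truncations $(\partial_k u_\eps-\ell)^\pm$, are subsolutions of a uniformly elliptic equation whose ellipticity ratio depends only on $n,p$. A De Giorgi-type oscillation-decay alternative, distinguishing the degenerate and non-degenerate regimes as in DiBenedetto and Tolksdorf, then yields $C^{1,\alpha}$ estimates for $\gr u_\eps$ that are uniform in $\eps$. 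Finally, Arzelà--Ascoli, together with uniqueness from (1), passes these estimates to $u$.

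The main obstacle is this uniform-in-$\eps$ Hölder estimate for the gradient near the degenerate set $\{\gr u=0\}$, which is where the intrinsic-scaling alternative is needed. Since this is precisely the content of the cited works \cite{Dib, T, L2}, in the paper it suffices to invoke them.
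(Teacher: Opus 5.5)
Your proposal is correct and consistent with the paper, which gives no proof of Theorem~\ref{thm:plap} and simply invokes the classical literature (Heinonen--Kilpel\"ainen--Martio, Gilbarg--Trudinger, DiBenedetto, Lewis, Tolksdorf, Lieberman). Your $(u-v)^+$ test-function argument, using strict monotonicity of $\xi\mapsto|\xi|^{p-2}\xi$ and the Poincar\'e inequality, is the standard proof of (1) in those references, and deferring the uniform-in-$\eps$ gradient H\"older estimate in (2) to DiBenedetto, Lewis and Tolksdorf is exactly what the paper does.
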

%the comparison principle for the $p$-Laplacian, i.e. ; we refer to \cite{HKM} for a proof. 
%The uniqueness of weak solutions of $ \lap_p u=0 $ in $\Om$ and $u=f$ on $\del\Om$ follows easily from the comparison principle. The regularity theory of $p$-harmonic functions is more involved. It has been shown by DiBenedetto \cite{Dib}, Lewis \cite{L2} and Tolksdorff \cite{T} that the weak solutions of $ \lap_p u=0 $ in $\Om$ for $1<p<\infty$ are locally $C^{1,\beta}$, i.e. there exists 
%$\beta =\beta(n,p)\in (0,1)$ such that for any $B \subset\subset \Om$, we have
%\begin{equation}\label{eq:c1reg}
%\|u\|_{C^{1,\beta}(B)}= \|u\|_{L^\infty(B)} +\|\gr u\|_{L^\infty(B, \R^n)} 
%+  \underset{\underset{x \neq y}{x,y\in B}}{\sup}  \frac{|\gr u(x)-\gr u(y)|}{|x-y|^\beta} \leq c\, \|u\|_{W^{1,p}(\Om)},
%\end{equation}
%for some $c=c(n,p,\diam(B))>0$. For $p>2$ the regularity is optimal.
Furthermore, the continuity of the gradient of the $p$-Laplacian of the above  implies that if $\gr u\neq 0$ in $\Om'\subset\Om$, then 
we can conclude $u\in C^\infty (\Om')$ 
from Schauder estimates, see \cite{GT}. 
%The boundary regularity is also known for $\lap_p u= 0$ in $\Om$ and $u=f$ on $\del\Om$ if 
%$\Om$ is of class $C^{1,\alpha}$ and $f\in C^{1,\alpha}(\del\Om)$, see Lieberman \cite{Li}; in this case $u\in C^{1,\beta}(\bar\Om)$ for some $\beta =\beta(n,p,\alpha)\in (0,1)$ along with the following global estimate
%\begin{equation}\label{eq:c1regbd}
%\|u\|_{C^{1,\beta}(\Om)} \leq C\big(n,p, \alpha, \|f\|_{C^{1,\alpha}(\del\Om)}, \|u\|_{W^{1,p}(\Om)}, \Om\big). 
%\end{equation}

Given a bounded and sufficiently regular domain $\Om\subset \R^n$, 
%for any $f \in C(\del\Om)$ and $v$ being the solution of the Dirichlet problem $ \lap v=0 $ in $\Om$ and $v=f$ on $\del\Om$, we have 
%\begin{equation}\label{eq:harmsr}
%v(x)= \int_{\del\Om} f(y) d\om^x(y) 
%\end{equation}
%from maximum principle and Riesz representation
%theorem, 
%where $\om^x$ is a measure on $\del\Om$ referred to as the {harmonic measure} at $x\in\Om$. The measure can be prescribed by the Green's function with a pole at $x$, i.e. the weak (distributional) solution of the 
%Dirichlet problem $ \lap G^x= \delta_x $ in $\Om$ and $G^x=0$ on $\del\Om$. If $\del\Om$ is smooth enough with unit normal $\nu$, it is not difficult to see that $d\om^x= 
%(\del G^x/\del \nu) d\h^{n-1} =\inp{\gr G^x}{\nu} d\h^{n-1}$. 
%This notion can be generalized to the case for the $p$-Laplacian for $1<p<\infty$. Given 
and a neighbourhood $N$ of $\del\Om$, if $u\in W^{1,p}(\Om\cap N)$ is a positive weak solution to the
$p$-Laplacian in $\Om\cap N$, then upon zero extension $u\in W^{1,p}(N)$. Since $p$-superharmonic functions form a non-negative distribution as shown in \cite{HKM}, from a theorem of Schwartz using the Riesz representation theorem, 
there exists a non-negative Radon measure $\om_p$ on $\del\Om$ such that 
$$ \int_\Om |\gr u|^{p-2}\inp{\gr u}{\gr \phi}\dx = -\int_{\del\Om} \phi \, d\om_p ,$$
for any $\phi \in C^\infty_0(N)$; the measure $\om_p$ is called the $p$-{harmonic measure} associated to $u$. 
Such measures can also be defined for $\mathcal A$-harmonic functions that are, in general, referred as Riesz measures, see \cite{HKM, KZ}.
If $\del\Om$ is $C^{0,1}$, then the unit normal $\nu_{\del\Om}= -\gr u/|\gr u| $ is defined almost everywhere on $\del\Om= \del\{u>0\}$ and hence 
\begin{equation}\label{eq:phm}
d\om_p= |\gr u|^{p-1} d\h^{n-1}\on_{\del\Om}.
\end{equation}
%Evidently, for the case $p=2$, if $N=\R^n\mns\{x\}$ and $u=G^x$ then $\om_2=\om^x$. More generally, 
The function $u$ may have a blow-up inside 
$\Om\mns N$ but the $p$-harmonicity in $\Om \cap N$ implies that $u\in C^{1,\alpha}_\loc(\Om \cap N)$ from Theorem \ref{thm:plap} above. Also, the neighborhood $N$ of $\del\Om$ can be chosen up to possible reduction so that we can assume without loss of generality that 
$\gr u \neq 0$ in $\Om\cap N$ so that $u\in C^\infty (\Om \cap N)$ as stated above, and furthermore, we can assume 
$$\|u\|_{L^\infty(\del N\cap \Om)}+\|\gr u\|_{L^\infty(\del N\cap \Om)} <\infty ,$$
in other words, all possible singularities of $u$ are strictly the interior of $\Om\mns \bar N$. 

The above notion of $p$-harmonic measure is defined for any open connected domains $\Om$, including convex domains. 
The notion extends to a convex set $K$ of non-empty interior in the same way taking $\Om$ as the interior of $K$ so that 
given a neighbourhood $N$ of $\del K$ and a function $u\in W^{1,p}(K\cap N)$, we have the notion of a $p$-harmonic measure 
$\om_p$ associated to $u$ given by $d\om_p= |\gr u|^{p-1} d\h^{n-1}\on_{\del K}$. 
Then, this can be further extended naturally to the case
case of $K$ being a convex set of empty interior i.e. $K=\del K$; given $N$ is a neighborhood of $K$ and 
$u\in W^{1,p}(N)$ being a positive weak solution to the
$p$-Laplacian in $N\mns K$ and $u$ vanish on $K$, there exists a non-negative Radon measure $\om_p$ on $K$ such that 
$$ \int_N |\gr u|^{p-2}\inp{\gr u}{\gr \phi}\dx = -\int_{K} \phi \, d\om_p ,$$
for any $\phi \in C^\infty_0(N)$ similarly as above which is called the $p$-harmonic measure associated to $u$, 
and $d\om_p= |\gr u|^{p-1} d\h^{n-1}\on_{ K}$ if $K$ is Lipschitz. We refer to 
\cite{LNld} for further details on quasi-linear equations on low dimensional sets. 
%In general, since for all convex sets $K$ we have 
%$\del K$ locally Lipschitz, hence we have $d\om_p= |\gr u|^{p-1} d\h^{n-1}\on_{\del K}$ for $u=u_K$. 

\subsection{Minkowski problem}\label{subsec:minkprob}
For $n\geq 3$, we consider convex domains $\Om\subset \R^n$ and the Gauss map $\g_\Om :\del\Om\to \Snn$ where $\g_\Om(x)$ is the outer unit normal at $x\in \del\Om$, defined almost everywhere 
on the boundary since $\del\Om$ is locally Lipschitz. Consider a positive finite Borel measure $ \mu $ on $ \mathbb{S}^{n-1}$ 
%being specified, that has no antipodal pair of point masses in addition to the necessary conditions for the Minkowski problem. Precisely, 
%we assume that $\mu$ 
which 
satisfies the following (necessary) conditions: 
\begin{equation}\label{eq:excond} 
\begin{aligned}
(i)&\, \,   { \int_{ \mathbb{S}^{n-1}} } | \inp{\zeta}{ \xi} | \, d \mu (  \xi )  
>  0, \quad\forall\   \zeta \in \mathbb{S}^{n-1} ,\\  
(ii)& {  \int_{ \mathbb{S}^{n-1}} }  \xi \, d \mu ( \xi )  = 0. \\
% (iii)&\, \,  \text{if}\quad \mu ( \{ \xi\})>0\quad \text{then}\quad \mu (\{- \xi\}) =0.
\end{aligned}
\end{equation}
Minkowski problems seek the existence up to a translation of a unique compact convex domain $\Om$ with non-empty interior such that $\mu_\Om = \mu$, 
%$(\g_\Om)_* \h^{n-1}\on_{\del\Om} = \mu$, where 
%$(\g_\Om)_* \h^{n-1}$ is the pushforward measure on $\Snn$ and 
%\begin{equation}\label{eq:minkprob}
%\mu_\Om = \mu,
%\end{equation}
where $\mu$ satisfies
$(i)$ and $(ii)$ of the above and $\mu_\Om = (\g_\Om)_*\eta$ for some prescribed measure $\eta$ on $\del\Om$ and $\g_\Om$ is the Gauss map. 
The prescribed measure $\eta$ is typically absolutely continous with respect to $\h^{n-1}\on_{\del\Om}$ and with respect to the Gauss map, we can 
express the density of the induced measure on $\Snn$ as 
\begin{equation}\label{eq:msrdensity}
d\mu_\Om( \xi) = \F[h_\Om]( \xi) d \xi 
\end{equation}
for a functional $\F$, where $h_\Om$ is the support function. 
%Existence of domains for the Minkowski problem involves computation of the first variation of measures 
%corresponding to $\Om + t\Om'$. i.e. 
%\begin{equation}\label{eq:L}
%\LL_\Om [v]  = \frac{d}{dt}\Big|_{t=0} \F[h_\Om +t h_{\Om'}]. 
%\end{equation}
%This is used to show the existence of domains using continuity method or variational techniques of constrained minimization problems. 
For the original Minkowski problem, we have $\eta= \h^{n-1}$ and 
$\mu_\Om= (\g_\Om)_*\h^{n-1}$. The case of the given measure being discrete was considered in \cite{M1, M2}, where the 
corresponding convex domains are polytopes. The continuous case has been shown in \cite{A1, A2,FJ} and the smooth case in \cite{CY}. 
If $\Om$ is of class $C^2_+$ and the covariant derivatives as in \eqref{eq:cov} defined by the charts as shown in the 
previous subsection,
then recalling \eqref{eq:detkinv} and \eqref{eq:intbd}, we note that 
the density \eqref{eq:msrdensity} is the reciprocal of the Gauss curvature, i.e. 
$$ d\mu_\Om( \xi)=\det(\covtwo  h_\Om + h_\Om \I) d \xi= \frac{d \xi}{\kr(\inv{\g_\Om}( \xi))}, $$
and furthermore, if $\mu_\Om = (\g_\Om)_*\eta$ where $d\eta = f d\h^{n-1}$ for a function $f:\del\Om \to \R$, then we have 
\begin{equation}\label{eq:fhom}
\F[h_\Om]( \xi) = \frac{(f\circ \inv{\g_\Om})( \xi)}{\kr(\inv{\g_\Om}( \xi))} = f(\gr h_\Om)\det(\covtwo  h_\Om + h_\Om \I).
\end{equation}
The above formulae hold for $S_\Om$-a.e. $ \xi\in \Snn$ for a general convex domain. 
As examples of the above, the prescribed measure is the harmonic measure $\om$ at origin in \cite{J1} where we have $\mu_\Om = (\g_\Om)_*\om$ and 
$f = (\del G/\del \nu)$ where $G$ is the Green's function with pole at $0$; in the case of capacitary measures in \cite{CNSXYZ}, we have 
$f= |\gr U|^p$ where $U$ is the capacitary function. 

In the case of $p$-harmonic measures, $\mu_\Om =  (\g_\Om)_*\om_p$ where 
 where $\om_p$ is the $p$-harmonic measure with respect to a function $u=u_\Om\in W^{1,p}(\Om\cap N)$ given by 
$d\om_p=|\gr u|^{p-1} d\h^{n-1}\on_{\del\Om}$ where $u$ is $p$-harmonic in $\Om\cap N$ and satisfies 
\begin{equation}\label{eq:omdir}
 \begin{cases}
  \dv (|\gr u|^{p-2}\gr u)=0,\ \ &\text{in}\ \Om\cap N;\\
   u > 0, \ \ &\text{in}\ \Om;\\
  u= 0, \ \ &\text{on}\ \del\Om,
 \end{cases}
\end{equation}
where $N$ is a neighbourhood of $\del\Om$; thus, $u\in W^{1,p}(N)$ upon zero extension. It corresponds to the density 
$f= |\gr u|^{p-1}$ in \eqref{eq:fhom} for $1<p<\infty$. Henceforth, we shall fix
\begin{equation}\label{eq:pharmink}
\mu_\Om =  (\g_\Om)_*\om_p, \quad \text{where}\quad 
\om_p(E)= \int_{E\cap \del\Om} |\gr u|^{p-1}d\h^{n-1}. 
\end{equation}
The choice of $N$ is made so that 
$\gr u\neq 0$ in $\Om\cap N$ and 
%\begin{equation}\label{eq:udeln}
%\|u\|_{L^\infty(\bar N\cap \Om)}+\|\gr u\|_{L^\infty(\bar N\cap \Om)} <\infty,
%\end{equation}
$\|u\|_{L^\infty(\bar N\cap \Om)}+\|\gr u\|_{L^\infty(\bar N\cap \Om)} <\infty$ up to possible reduction,
and without loss of generality, we also assume that $\del N$ is $C^\infty$.  
Using the notations \eqref{eq:pharmink} and 
\eqref{eq:phm}, we have $$d\mu_\Om = |\gr u(F_\Om( \xi))|^{p-1}d\h^{n-1}\on_{\del\Om}=  \F[h_\Om]( \xi) d  \xi, $$
where $ F_\Om( \xi) := \inv{\g_\Om}( \xi)= \gr h_\Om( \xi) $ and from \eqref{eq:fhom}, we have 
$$  \F[h_\Om]( \xi) = |\gr u(F_\Om( \xi))|^{p-1} \det(\covtwo  h_\Om + h_\Om \I). $$
If $K=\bar\Om$ for a convex domain $\Om$ then we define $\mu_K=\mu_\Om$. 
%More generally, for convex sets $K$ of empty interior, $\mu_K$'s can also be naturally defined with a more general notion of $\g_K,\, \inv{\g_K}(E)$ (see \cite{Ak-Muk}). 
%; if $K$ is of empty interior, i.e. $K=\del K$, 
%we define $\mu_K= (\g_K)_* \om_p$ in the sense of \eqref{eq:gmap} and \eqref{eq:ginv}, where $\om_p$ is the $p$-harmonic measure with respect to a function $u=u_K\in W^{1,p}(N)$ with $N$ being a neighbourhood of $K$, given by 
%$d\om_p=|\gr u|^{p-1} d\h^{n-1}\on_{K}$ where $u$ is $p$-harmonic in $ N$ and satisfies 
%\begin{equation}\label{eq:omdiremp}
% \begin{cases}
%  \dv (|\gr u|^{p-2}\gr u)=0\ \ &\text{in}\  N\mns K;\\
%   u \geq 0 \ \ &\text{in}\ N;\\
%  u= 0 \ \ &\text{on}\  K.
% \end{cases}
%\end{equation}
%This has been considered recently in \cite{Ak-Muk}, where the existence of such domains $\Om$ satisfying $\mu_\Om=\mu$ 
%have been proved. 
Given a function $u$ satisfying \eqref{eq:omdir}, the $p$-harmonic measures for the variations 
$\Om^t=\Om+t\Om'$ are uniquely defined by measures corresponding to $u(\cdot, t)\in W^{1,p}(\Om^t\cap N)$, the weak solution of the following Dirichlet problem
\begin{equation}\label{eq:omtdir}
 \begin{cases}
  \dv \big(|\gr u(\cdot, t)|^{p-2}\gr u(\cdot, t)\big)=0,\ \ &\text{in}\ \Om^t\cap N;\\
  u(x, t)= 0, \ \  &\forall\ x\in \del\Om^t\cap N;\\
  u(x, t)= u\big(\frac{x}{1+t}\big),\ \  &\forall\ x\in \del N\cap \Om^t;
 \end{cases}
\end{equation}
for $t\in [-\tau, \tau]$ for $\tau>0$ small enough, so that upon zero extension, $u(\cdot, t)\in W^{1,p}(N)$. Then the corresponding measures are uniquely defined by 
$$d\mu_{\Om^t} = |\gr u (F(\xi,t), t)|^{p-1}d\h^{n-1}\on_{\del\Om^t}=  \F[h_{\Om^t}]( \xi)\, d  \xi,$$
where 
$F( \xi, t):= F_{\Om^t}( \xi) = \inv{\g_{\Om^t}}( \xi)= \gr h_\Om( \xi) + t \gr h_{\Om'}( \xi) $ and 
\begin{equation}\label{eq:fht}
\F[h_\Om+t h_{\Om'}]( \xi) = |\gr u (F( \xi, t), t )|^{p-1} \det\big(\covtwo  h_\Om + h_\Om \I + t(\covtwo  h_{\Om'} + h_{\Om'}\I)\big).
\end{equation}
Hence, the special case $\Om=\Om'$ on \eqref{eq:fht} for $\lambda =(1+t)\in (1-\tau, 1+\tau)$
leads to 
\begin{equation}\label{eq:homF}
\F[\lambda h_\Om]= \lambda^{n-p}\F[h_\Om],
\end{equation}
 see 
\cite[Lemma 3.12]{Ak-Muk}. 
As in \cite{Ak-Muk}, letting $\mathcal{N}^\tau(\Om)  = \{\Om + t\Om'\, : \, 0\leq |t|<\tau\}$, we denote the functional $\Gamma: \mathcal{N}^\tau(\Om) \to \R$ as 
\begin{equation}\label{eq:Gfunc}
\Gamma (\Om+t\Om'):= \int_{\Snn} h_{\Om+t\Om'} ( \xi)\, d \mu_{\Om^t}( \xi) = \int_{\Snn} (h_\Om+t h_{\Om'})(\xi)\F[h_{\Om^t}]( \xi)\, d  \xi
%=\int_{\Snn} h_K ( \xi)| \gr u(\inv{\g_K}(\xi))|^{p-1} d S_K( \xi)
\end{equation}
and $\Gamma(\bar\Om):=\Gamma(\Om)$.   
Then from the homogeneity of support function and \eqref{eq:homF}, 
\begin{equation}\label{eq:Ghom}
 \Gamma(\lambda\Om) = \lambda^{n-p+1} \Gamma (\Om),\qquad \forall\ \lambda =(1+t)\in (1-\tau, 1+\tau).
\end{equation}
Furthermore, we have the following Hadamard type formula, see \cite[Proposition 3.76]{Ak-Muk}. 
\begin{lemma}\label{lem:gomt}
Given convex domains $\Om+t\Om'$ with $0\leq |t|<\tau$ for $\tau>0$ small enough,  
$h_{\Om'}>0$ and  with 
$d\mu_{\Om+t \Om'}= |\gr u (\cdot, t)|^{p-1}d\h^{n-1}\on_{\del (\Om+t \Om')}$ where 
$u(\cdot, t)$ is the unique solution of the Dirichlet problem \eqref{eq:omtdir}, we have 
\begin{equation}\label{eq:gomt}
\frac{d}{dt}\Big|_{t=0} \Gamma (\Om+t \Om')= (n-p+1) \int_{\Snn} h_{\Om'} \,d\mu_{\Om},
\end{equation}
where $\Gamma$ as in \eqref{eq:Gfunc} and $\mu_\Om$ is as in \eqref{eq:pharmink}. 
\end{lemma}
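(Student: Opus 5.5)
We take $\Om$ of class $C^2_+$ and use the notation of \eqref{eq:fht}. Write $h=h_\Om$ and $h'=h_{\Om'}$. Differentiating under the integral sign in \eqref{eq:Gfunc} gives
\begin{equation*}
\frac{d}{dt}\Big|_{t=0}\Gamma(\Om+t\Om')=\int_{\Snn} h'\,\F[h]\,d\xi+\int_{\Snn} h\,\mathcal L[h']\,d\xi,
\qquad \mathcal L[v]:=\frac{d}{dt}\Big|_{t=0}\F[h+tv].
\end{equation*}
It therefore suffices to show $\int_{\Snn} h\,\mathcal L[h']\,d\xi=(n-p)\int_{\Snn} h'\,\F[h]\,d\xi$. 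The plan has two parts:
\begin{enumerate}
\item[(a)] differentiate \eqref{eq:homF} in $\lambda$ at $\lambda=1$, which gives $\mathcal L[h]=(n-p)\F[h]$;
\item[(b)] prove the symmetry $\int_{\Snn} h\,\mathcal L[h']\,d\xi=\int_{\Snn} h'\,\mathcal L[h]\,d\xi$.
\end{enumerate}
Combining (a) and (b) yields \eqref{eq:gomt}.

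To prove (b), we split $\mathcal L$ via \eqref{eq:fht}. Let $\dot u=\frac{d}{dt}\big|_{t=0}u(\cdot,t)$. Then
\begin{equation*}
\mathcal L[v]=(p-1)|\gr u|^{p-2}\,\frac{d}{dt}\Big|_{t=0}|\gr u(F(\xi,t),t)|\,\det(\covtwo h+h\I)+|\gr u|^{p-1}\sum_{i,j}\cof_{i,j}[\covtwo h+h\I]\,(\cov_{i,j}v+v\,\delta_{ij}).
\end{equation*}

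\emph{Cofactor term.} We integrate by parts twice on $\Snn$, using that the rows of $\cof[\covtwo h+h\I]$ are divergence free. This moves the operator from $v=h'$ onto $h$, producing a symmetric bilinear form plus error terms containing $\cov(|\gr u|^{p-1})$.

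\emph{Velocity term.} Differentiating $u(F(\xi,t),t)=0$ and using that $\gr u$ is normal gives the boundary value $\dot u=|\gr u|\,h'(\xi)$ at $F(\xi)$. The function $\dot u$ solves the linearized $p$-Laplace equation $\dv(A(\gr u)\gr\dot u)=0$ in $\Om\cap N$, where $A(\gr u)=|\gr u|^{p-2}\big(\I+(p-2)\frac{\gr u\otimes\gr u}{|\gr u|^2}\big)$. The function $\frac{d}{dt}|\gr u(F,t)|$ is then expressed through $\inp{D^2u\,\xi}{\gr h'}$ and $\partial_\nu \dot u$. Lemma \ref{lem:grad2u}(1)--(2) converts the $D^2u$ contributions into cofactor and covariant-derivative expressions, and these should cancel the error terms from the cofactor computation.

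\emph{Green's identity.} What remains is $\int_{\del\Om} (h\circ\g)\,|\gr u|^{p-2}\,\partial_\nu\dot u_{h'}$, compared with the same expression with $h$ and $h'$ exchanged. The dilation direction corresponds to $w:=-\inp{x}{\gr u}$, which solves the same linearized equation and has trace $|\gr u|\,h$ on $\del\Om$. Green's second identity for $\dv(A\gr\cdot)$ on $\Om\cap N$, applied to $\dot u_{h'}$ and $w$, exchanges the roles of $h$ and $h'$ on $\del\Om$. This leaves only terms on $\del N\cap\Om$.

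\emph{Main obstacle.} We must show that the $\del N$ contributions do not affect the symmetry. The Dirichlet data in \eqref{eq:omtdir} are $u(x/(1+t))$, so both $\dot u_{h'}$ and $w$ equal $-\inp{x}{\gr u}$ on $\del N$, and $\dot u_{h'}-w$ vanishes there. The residual is a flux $\int_{\del N} w\,\inp{A\gr(\dot u_{h'}-w)}{\nu}$. We would try to show this flux is independent of $h'$, or is itself proportional to $\int h'\,d\mu_\Om$, using the weak formulation and the fact that $\dot u_{h'}-w$ solves the linearized equation with zero data on $\del N$. If that fails, the fallback is a Rellich--Pohozaev identity: write $\Gamma(\Om)=\int_{\del\Om}\inp{x}{\nu}|\gr u|^{p-1}\,d\h^{n-1}$, express it through a volume integral over $\Om\cap N$ plus $\del N$ terms, and differentiate that representation directly in $t$.
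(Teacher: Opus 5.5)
The paper does not prove this lemma; it quotes it from \cite[Proposition 3.76]{Ak-Muk}. Your reduction to (a) homogeneity plus (b) symmetry of the linearization is the natural route; it is the self-adjointness alluded to in Remark \ref{rem:onJ}. Part (a) is fine, and so is your handling of the local terms: after the two integrations by parts and Lemma \ref{lem:grad2u}, everything in $\int_{\Snn} h\,\mathcal L[h']\,d\xi$ except
\[
-(p-1)\int_{\del\Om}(h\circ\g)\,|\gr u|^{p-2}\,\del_\nu\dot u_{h'}\,d\h^{n-1}
\]
is a bilinear form symmetric in $h,h'$. The gap is in (b), and it has two parts. First, the Green's identity step does not exchange $h$ and $h'$. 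On $\del\Om$ one has $A(\gr u)\nu=(p-1)|\gr u|^{p-2}\nu$, and $w$ has trace $|\gr u|\,h\circ\g$. So pairing $\dot u_{h'}$ with $w$ produces $(p-1)\int_{\del\Om}(h\circ\g)|\gr u|^{p-1}\del_\nu\dot u_{h'}$, which has one more power of $|\gr u|$ than the term above. The two agree only when $|\gr u|$ is constant on $\del\Om$.

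Second, and decisively, the $\del N$ obstruction you flagged cannot be removed, because with the data of \eqref{eq:omtdir} step (b) is false. For every $\Om'$ one has $\dot u_{h'}=-\inp{x}{\gr u}$ on $\del N\cap\Om$. Hence $\F[h+tv]$ is not a functional of $h+tv$ alone, and $v\mapsto\mathcal L[v]$ is affine rather than linear. Its constant part is $\mathcal L[0]=-(p-1)|\gr u|^{p-2}\del_\nu\eta\,\det(\covtwo h+h\I)$, where $\eta$ solves the linearized equation with $\eta=0$ on $\del\Om$ and $\eta=w$ on $\del N\cap\Om$. Applying (b) to $h'$ and to $2h'$ forces $\int_{\Snn}h\,\mathcal L[0]\,d\xi=0$. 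But if $w>0$ on $\del N\cap\Om$ (e.g. when it is a convex level set of $u$ around the origin), Hopf's lemma gives $\del_\nu\eta<0$, hence $\int h\,\mathcal L[0]>0$. For the same reason $\frac{d}{dt}|_{t=0}\Gamma(\Om+t\Om')$ is affine in $h_{\Om'}$ with a nonzero constant term. So \eqref{eq:gomt} itself cannot hold for all $\Om'$, and neither your flux argument nor a Rellich--Pohozaev fallback can rescue it. Explicitly, take $\Om=B_1$, $\Om\cap N=B_1\mns\bar B_r$, $u=c(|x|^\alpha-1)$ with $\alpha=\frac{p-n}{p-1}$ and $c\alpha<0$, and $\Om'=B_\rho$. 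Then $u(x,t)=c_t\big(|x|^\alpha-(1+t\rho)^\alpha\big)$ with $c_t\big(r^\alpha-(1+t\rho)^\alpha\big)=c\big((r/(1+t))^\alpha-1\big)$, and $\Gamma(\Om+t\Om')=\upomega_n|\alpha c_t|^{p-1}(1+t\rho)$. This gives
\[
\frac{d}{dt}\Big|_{t=0}\log\Gamma(\Om+t\Om')=\rho+(p-n)\,\frac{\rho-r^\alpha}{r^\alpha-1},
\]
while \eqref{eq:gomt} demands $(n-p+1)\rho$. The difference is $(p-n)\frac{(\rho-1)r^\alpha}{r^\alpha-1}$; for instance, with $n=4$, $p=3$, $r=1/4$ one gets $2$ versus $2\rho$. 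The formula does hold for $\Om'=\Om$, where (a) alone suffices, but not in the stated generality.
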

Using these results, the existence theorem for the Minkowski problem for $p$-harmonic measures has been proved in \cite{Ak-Muk}. We quote the following from 
\cite[Theorem 1.2]{Ak-Muk}. 
\begin{theorem}[Existence]\label{thm:pharminkexist}
Given a finite regular Borel measure $ \mu $ on $\mathbb{S}^{n-1}$ satisfying conditions \eqref{eq:excond}, there exists a bounded convex domain $\Om$ with non-empty interior such that $\mu_\Om = \mu$ and 
$\int_{\Snn} h_{\Om}\, d \mu_{\Om}=1$, where $\mu_\Om$ is as in \eqref{eq:pharmink} for $1<p<\infty$. 
\end{theorem}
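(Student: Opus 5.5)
The plan is to follow the variational scheme of Jerison for harmonic measure and of Colesanti et al.\ \cite{CNSXYZ} for capacitary measures, with the functional $\Gamma$ of \eqref{eq:Gfunc} and the Hadamard formula of Lemma \ref{lem:gomt} as the main tools. I first treat the case where $\mu$ is a finite discrete measure, $\mu=\sum_{j=1}^m c_j\delta_{\xi_j}$ satisfying \eqref{eq:excond}, so that the candidate domains are polytopes $P(z)=\bigcap_j\{x:\inp{x}{\xi_j}\le z_j\}$ with $z\in\R^m_{+}$. Then I pass to general $\mu$ by weak approximation.

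\textbf{Discrete case.} I consider the constrained problem of minimizing $\Phi(P)=\sum_j c_j h_P(\xi_j)$ over polytopes with facet normals among the $\xi_j$, subject to $\Gamma(P)=1$ and to $P$ containing the origin. The constraint is normalized using the homogeneity \eqref{eq:Ghom}, which lets one dilate any admissible polytope. For the existence of a minimizer I need compactness.
\begin{enumerate}
\item A uniform diameter bound along a minimizing sequence follows from \eqref{eq:excond}(i). If $P$ contains a segment of length $R$ in direction $\zeta$, then $\Phi(P)\gtrsim R\int|\inp{\zeta}{\xi}|\,d\mu$.
\item Nondegeneracy (non-empty interior) of the limit has to come from the constraint $\Gamma=1$. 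If the polytopes flattened, then, since $\Gamma$ has degree $n-p+1$, the quantity $\int h\,d\mu_P$ would have to stay bounded. This is contradicted by boundary Harnack / Lewis--Nystr\"om type estimates on $|\gr u|$ near thin sets, which make $\Gamma$ degenerate.
\item Continuity of $\Gamma$ under Hausdorff convergence, using \eqref{eq:hdh}, is obtained from weak convergence of the $p$-harmonic measures. This follows from the uniform $C^{1,\alpha}$ bounds of Theorem \ref{thm:plap} away from the boundary together with the weak formulation defining $\om_p$.
\end{enumerate}

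\textbf{Euler--Lagrange equation.} At a minimizer $P_0$ I perturb $h\mapsto h_{P_0}+t\,h'$ in facet directions. Lemma \ref{lem:gomt}, extended to polytopes by the approximation \eqref{eq:c2ap}, gives
$$\frac{d}{dt}\Big|_{t=0}\Gamma=(n-p+1)\int h'\,d\mu_{P_0}.$$
A Lagrange multiplier therefore gives $\mu=\lambda\,\mu_{P_0}$ as measures on $\{\xi_j\}$. Pairing with $h_{P_0}$ yields $\lambda=\Phi(P_0)/\big((n-p+1)\Gamma(P_0)\big)>0$. One must also check that every $\xi_j$ is an actual facet normal, which is done by a perturbation argument exactly as in Minkowski's proof. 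Finally, dilating $P_0$ and using \eqref{eq:homF} absorbs $\lambda$ and restores the normalization $\int h\,d\mu=1$; this is possible because $n-p\neq n-p+1$.

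\textbf{General $\mu$.} I approximate $\mu$ weakly by discrete measures $\mu_k$ satisfying \eqref{eq:excond} with uniform constants in (i). The corresponding $\Om_k$ have uniformly bounded diameters by the argument in step 1 above. They have inradii bounded below because the normalization $\int h_{\Om_k}\,d\mu_{\Om_k}=1$ combined with \eqref{eq:excond}(i) prevents collapse. Blaschke selection then gives a Hausdorff limit $\Om$, and weak continuity of $\Om\mapsto\mu_\Om$ shows $\mu_\Om=\mu$, with the normalization passing to the limit.

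\textbf{Main obstacle.} I expect the main difficulty to be this weak continuity, together with the lower bound on the inradius. The $p$-harmonic functions $u_k$ are only defined near $\del\Om_k$ with boundary data on $\del N$, so I must fix a common neighbourhood $N$ and data so that the $u_k$ converge locally uniformly, with gradient control up to the boundary. For that I would rely on Lewis--Nystr\"om boundary Harnack and H\"older estimates for $p$-harmonic functions in Lipschitz (convex) domains.
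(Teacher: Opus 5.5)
The paper does not prove this statement; it quotes it from \cite{Ak-Muk} and only remarks that the normalization $\Gamma(\Om)=1$ enters there as a constraint. Your constrained-minimization outline is in that spirit, but as written it has two genuine gaps.

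\textbf{The functional you minimize is not defined.} You treat $P\mapsto\mu_P$ and $P\mapsto\Gamma(P)$ as global functionals on polytopes. You assume \eqref{eq:Ghom} holds for arbitrary dilations and that Lemma \ref{lem:gomt} is available at whatever $P_0$ the minimization produces. Nothing in this setting provides that. First, $\mu_\Om$ in \eqref{eq:pharmink} depends on an a priori choice of the pair $(u,N)$. Second, $\Gamma$, \eqref{eq:homF}, \eqref{eq:Ghom} and \eqref{eq:gomt} live only on $\mathcal N^\tau(\Om)$ around a fixed base pair: they are built from the specific problem \eqref{eq:omtdir} with data $u(x/(1+t))$ on $\del N$, and hold only for $|t|<\tau$ and $\lambda\in(1-\tau,1+\tau)$. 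A minimizing sequence leaves any such neighbourhood, and no fixed $N$ contains $\del P$ for all its members.

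So you first need a rule $P\mapsto(u_P,N_P)$ with three properties. It must be equivariant under all dilations, to get global homogeneity. Its first variation must be exactly the one in \eqref{eq:omtdir}; otherwise, e.g.\ an inner boundary moving with $P$ adds a term to the Hadamard formula. And it must be stable under Hausdorff convergence. Your steps (2) and (3), and the approximation step, all hinge on this rule. Since $u\mapsto cu$ multiplies $\Gamma$ by $c^{p-1}$, the claim that boundary Harnack estimates make $\Gamma$ ``degenerate near thin sets'' has no content until $u_P$ is normalized. Likewise, $\int h_{\Om_k}\,d\mu_{\Om_k}=1$ together with \eqref{eq:excond}(i) bounds only the size of $\Om_k$, not its inradius. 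Excluding collapse needs a convergence result for normalized $p$-harmonic measures onto lower-dimensional limits (cf.\ \cite{LNld}), combined with (i). This is the heart of the proof, not a technicality to defer.

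\textbf{The final normalization fails, and so does the scheme at $p=n,\,n+1$.} The multiplier rule gives $\int h'\,d\mu=\Lambda(n-p+1)\int h'\,d\mu_{P_0}$. Hence $\mu=\lambda\mu_{P_0}$ with $\lambda=\Lambda(n-p+1)$, and pairing with $h_{P_0}$ gives $\lambda=\Phi(P_0)/\Gamma(P_0)=\Phi(P_0)$; your factor $1/(n-p+1)$ belongs to $\Lambda$, not $\lambda$. Replacing $\mu$ by $c\mu$ leaves $P_0$ unchanged and multiplies $\lambda$ by $c$, so $\lambda\neq1$ in general. Dilating to $Q=rP_0$ with $r^{n-p}=\lambda$ gives $\mu_Q=\mu$, but then
\[
\int h_Q\,d\mu_Q=r^{n-p+1}=\lambda^{(n-p+1)/(n-p)}\neq 1 .
\]
One parameter cannot enforce two conditions, and $n-p\neq n-p+1$ does not help. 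The exponents covered by the statement also break the scheme. At $p=n$, dilation leaves $\mu_P$ unchanged. At $p=n+1$, the right-hand side of \eqref{eq:gomt} vanishes identically, so the constraint $\Gamma=1$ is degenerate and dilation-invariant, and the multiplier rule gives no relation between $\mu$ and $\mu_{P_0}$.

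The double normalization is possible only because $p$-harmonic measures are not unique (Remark \ref{rem:onJ}). Once some admissible $u$ gives $\mu_\Om=\mu$, the pair $\big(r\Om,\ r^{(p-n)/(p-1)}u(\cdot/r)\big)$ still yields $\mu$ while multiplying $\int h\,d\mu$ by $r$. Moreover $\int h_\Om\,d\mu>0$ after translating $0$ into $\Om$, which by \eqref{eq:excond}(ii) does not change this integral. You must use this extra freedom in $u$ explicitly: with a fixed rule $P\mapsto u_P$, the value of $\int h\,d\mu$ at a solution is not at your disposal.
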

\begin{remark}
The constraint $\Gamma (\Om)=\int_{\Snn} h_{\Om}\, d \mu_{\Om}=1$ in the above theorem is evident in the proof of \cite[Theorem 1.2]{Ak-Muk}. 
\end{remark}
%The uniqueness of the domains of Theorem \ref{thm:pharminkexist} up to translation is addressed in this paper. 

We complete this section with some clarifying remarks in the following. 
\begin{remark}\label{rem:onJ}
The existence of domains solving the Minkowski problem for harmonic measures was shown by Jerison \cite{J1}. It was also shown in \cite{J1} that the corresponding functional may not satisfy its corresponding Brunn-Minkowski inequality. Although our result is stated for $p>2$, the results in \cite{J1} are not in contradiction to $p=2$ of our case for the following reasons:
\begin{enumerate}
\item The measures in \cite{J1} have the constraint that they are probability measures unlike \cite{Ak-Muk} and our case. This alters the degree of homogenity of the functional making the corresponding inequality in \cite{J1} different from ours. 
\item The principal difference between \cite{J1} and \cite{Ak-Muk} and our case is the fact that the definition of classical harmonic measures is too restrictive. In our notation it corresponds to a particular $2$-harmonic measure with  $N=\R^n\mns\{0\}$ strictly and $u=G$, which is the Green's function that blows up at $0$, i.e. the inner boundary reduces to a point where the harmonic function blows up. This case does not satisfy the finiteness assumption
$\|u\|_{L^\infty(\del N\cap \Om)}+\|\gr u\|_{L^\infty(\del N\cap \Om)} <\infty$, which we are allowed to freely choose otherwise 
given our choice of the neighborhood can be taken with any possible reductions (as a consequence of this, the first variation of $\F$ is self-adjoint in $L^2$ for \cite{Ak-Muk} but not self adjoint for \cite{J1}). A drawback for the case of $p$-harmonic measures is that they cannot be uniquely defined and depend on the choice of an apriori $p$-harmonic function. 
\end{enumerate}
\end{remark}

\section{The Brunn-Minkowski inequality}\label{sec:bmineq}

Here onwards, we fix a compact convex set $K_0\subset \R^n$ of non-empty interior, a neighbourhood $N$ (open and connected) of 
$\del K_0$ and a function $u_0\in W^{1,p}(K_0\cap N)$ that satisfies 
\begin{equation}\label{eq:k0dir}
 \begin{cases}
  \dv (|\gr u_0|^{p-2}\gr u_0)=0,\ \ &\text{in}\ \mathring K_0\cap N;\\
   u_0 > 0, \ \ &\text{in}\ \mathring K_0;\\
  u_0= 0, \ \ &\text{on}\ \del K_0.
 \end{cases}
\end{equation}
Up to a possible reduction of the choice of $N$, we can assume without loss of generality that 
$\gr u_0 \neq 0$ in $K_0\cap N$ so that $u_0\in C^\infty (K_0\cap N)$.  Furthermore, since $K_0$ is convex, 
we can assume that $N$ is a convex ring (see \cite{Lewis77} for definition) and 
$$\|u_0\|_{L^\infty(\del N\cap K_0)}+\|\gr u_0\|_{L^\infty(\del N\cap K_0)} <\infty .$$
We shall establish a local Brunn-Minkowski inequality for a functional $\T$ as in 
\eqref{eq:bmfunc}. For the inequality, we shall consider convex bodies in a neighbourhood of $K_0$ given by 
\begin{equation}\label{eq:nbdK0}
\mathcal{N}_\tau(K_0)  = \set{\frac{K_{0} + tK'}{(1+t)}}{ 0\leq |t|<\tau},
\end{equation}
with $0\in K'$ and $\tau>0$ small enough such that $N$ is also a neighbourhood of $\del K$ i.e. $\del K\subset N$ and hence 
$\del N\cap K=\del N\cap K_0$ for any $K\in \mathcal{N}_\tau(K_0) $. We label 
$u_{K_0}=u_0$ and $u_K\in W^{1,p}(K\cap N)$ for any 
$K\in \mathcal{N}_\tau(K_0) $, as the weak solution of the Dirichlet problem 
\begin{equation}\label{eq:kdir}
 \begin{cases}
  \dv \big(|\gr u_K|^{p-2}\gr u_K\big)=0,\ \ &\text{in}\ \mathring K\cap N;\\
  u_K(x)= 0, \ \  &\forall\ x\in \del K\cap N;\\
  u_K(x)= u_0(x),\ \  &\forall\ x\in \del N\cap \mathring K.
 \end{cases}
\end{equation}
Upon zero extension $u_K\in W^{1,p}(N)$. 
As in Section \ref{sec:prelim}, we henceforth denote the image of the $p$-hamonic measure $\mu_K$ on $\Snn$ associated to $u_K$ of \eqref{eq:kdir} as 
\begin{equation}\label{eq:mukint}
\mu_K(E) := \int_{\inv{\g_K}(E)} |\gr u_K|^{p-1} d\h^{n-1}, \quad \text{for any measurable}\ E\sub \Snn,
\end{equation}
where $\g_K$ is the Gauss map. In other words, for the $p$-harmonic  measure $d\om_K= |\gr u_K|^{p-1} d\h^{n-1}$ on $\del K$, we have 
$\mu_K=(\g_K)_*\om_K$. The functional $\T : \mathcal{N}_\tau(K_0) \to \R$ of \eqref{eq:bmfunc} is defined by 
\begin{equation}\label{eq:defT}
\T (K)=\int_{\del K} (h_K \circ \g_K)| \gr u_K|^{p-1}\, d\h^{n-1} 
=\int_{\Snn} h_K ( \xi)\, d \mu_K( \xi),
\end{equation}
for which we prove the Brunn-Minkowski inequality \eqref{eq:bm} for $K_1, K_2 \in \mathcal{N}(K_0)\sub \mathcal{N}_\tau(K_0) $. 

A key step towards the proof is to show that supremal convolution of $u_{K_1}$ 
and $u_{K_2}$ is a subsolution of the $p$-Laplacian in $(1-\lambda)K_{1} + \lambda K_{2}$. Towards this, we need to obtain the $p$-Laplacian in terms of the support functions of sub-level sets, which is essential to figure the behavior of $p$-harmonicity with respect to the Minkowski addition of convex sets. This is the content of the following subsection. It follows along the direction of \cite{CS}. 
\subsection{Support functions on sub-level sets}\label{subsec:supsubl}
Here we assume $\Om$ as a convex domain of class $C^2_+$ containing the origin, 
a neighborhood $N$ of $\del\Om$ so that $\Om\cap N$ is a convex ring, and $u\in C^2(\Om\cap N)$ such that $u>0$ and $|\gr u|>0$ in $\Om\cap N,\ u=0$ on $\del\Om$ and 
$\|u\|_{L^\infty(\del N\cap \Om)}+\|\gr u\|_{L^\infty(\del N\cap \Om)} <\infty $. Moreover, we assume 
\begin{equation}\label{eq:omt}
\Om_t := \set{x\in\Om}{u(x)>t}, \qquad \forall\ 0 \leq t\leq 
\|u\|_{L^\infty(N\cap \Om)},
\end{equation}
are convex bodies of class $C^2_+$. Note that $\Om_0=\Om$ and for any  
$0 \leq t\leq \|u\|_{L^\infty(N\cap \Om)}$, we have 
$\del\Om_t=\{x\in \Om\,:\, u(x)=t\}$ and $\g_{\Om_t}(x)= -\gr u(x)/|\gr u(x)|$ for all 
$x\in \del\Om_t$. Since $\Om_t$'s are $C^2_+$ hence $\inv{\g_{\Om_t}}$ exists and we have the following decomposition of the convex ring,
\begin{equation}\label{eq:omdomt}
   % \bar\Om = \bigcup_{0\leq t<\infty} \del \Om_t \quad\text{and}\quad 
    \Om\cap N\ =\underset{0<t<\|u\|_{L^\infty(N\cap \Om)}}{\bigcup} \del \Om_t,
\end{equation}
with each $\del\Om_t$ being diffeomorphic to $\Snn$ via $\Theta_u:\Om\cap N\to \Snn\times[0, \|u\|_{L^\infty(N\cap \Om)}]$, given by 
\begin{equation}\label{eq:bij}
\Theta_u(x)=(-\gr u(x)/|\gr u(x)|, u(x))\quad\text{and}\quad 
\inv{\Theta_u}(\xi, t)= \inv{\g_{\Om_t}}(\xi). 
\end{equation}
%$x\mapsto (-\gr u(x)/|\gr u(x)|, u(x))$ has its inverse 
%$(\xi, t) \mapsto \inv{\g_{\Om_t}}(\xi)$, thereby forming 
%a bijection between 
%$\Om\cap N$ and $\Snn\times[0, \|u\|_{L^\infty(N\cap \Om)}]$. In fact, 
We denote the support function corresponding to $\{u>t\}$ as $h_u: \R^n\times [0, \|u\|_{L^\infty}]\to \R$ as
\begin{equation}\label{eq:hu}
 h_{u}(y, t) := h_{\Om_{t}} (y),
\end{equation}
with $h_{\Om_{t}} $ as in \eqref{eq:suppf} 
and for derivatives of $h_u$, the notations $\gr$ and $D^2$ shall be used as usual for the first coordinates and $\del_t, \del_t^2$, etc. for the last coordinate. 

Now, the goal is to express $\lap_p u$ entirely in terms of $h_u$ and its derivatives. Towards this, we require the following identities. 
 \begin{lemma}\label{lem:ids1}
 Given $\Om$ and $u\in C^2(\Om\cap N)$ as above, we have 
 $h_{u}\in C^{2}\big(\R^{n}\setminus\{0\}\times [0,\|u\|_{L^{\infty}})\big)$ and the following holds at any $x\in\Omega\cap N$:
    \begin{enumerate}
        \item $h_{u}\left( -\gr u/|\gr u|, u\right) = \inp {x}{-\gr u/|\gr u|}$;
        \item $\nabla h_{u}\left( -\gr u/|\gr u|, u\right) = x, \quad \text{ and } \quad 
        D^{2}h_{u} \left( -\gr u/|\gr u|, u\right) \gr u/|\gr u| = 0$;
        \item $\del_{t}h_{u}\left( -\gr u/|\gr u|, u\right) = -1/|\gr u|.$
    \end{enumerate}
\end{lemma}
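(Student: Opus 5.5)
The plan is to fix $x\in\Om\cap N$, set $t=u(x)$ and $\xi=-\gr u(x)/|\gr u(x)|$, and read everything off the facts already recorded for $C^2_+$ convex bodies in Subsection \ref{subsec:convsupp}, applied to the level set $\Om_t$.

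\emph{Items (1) and (2).} Since $\Om_t$ is $C^2_+$ with $x\in\del\Om_t$ and $\g_{\Om_t}(x)=\xi$, \eqref{eq:supg} gives $h_u(\xi,t)=h_{\Om_t}(\xi)=\inp{x}{\xi}$, which is (1). By \eqref{eq:hgrh}, $\gr h_{\Om_t}(\xi)=\inv{\g_{\Om_t}}(\xi)=x$, which is the first part of (2). The second part of (2) is the identity $D^2h_{\Om_t}(y)y=0$ of \eqref{eq:hgradh}, evaluated at $y=\xi$. Here $\gr u/|\gr u|=-\xi$, and the sign does not matter by linearity.

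\emph{Regularity.} I would use the diffeomorphism $\Theta_u$ of \eqref{eq:bij}. On $\Snn\times(0,\|u\|_{L^\infty})$ the restriction of $h_u$ equals $\inp{\inv{\Theta_u}(\xi,t)}{\xi}$. The map $\Theta_u$ is $C^1$ because $u\in C^2$, and its Jacobian is invertible. Invertibility holds because, at fixed $t$, the differential of $x\mapsto\xi$ on $\del\Om_t$ is the Weingarten map, which is nonsingular since $\Om_t$ is $C^2_+$, while $\gr u\neq0$ makes the $t$-direction transversal. The inverse function theorem then gives $\inv{\Theta_u}\in C^1$, hence $h_u\in C^1$.

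To upgrade to $C^2$, I would use the derivative formula $\gr h_u(\xi,t)=\inv{\Theta_u}(\xi,t)$, which is $C^1$. Combined with the $t$-derivative from (3) below, $\del_t h_u=-1/|\gr u|\circ\inv{\Theta_u}$, also $C^1$, this shows all first derivatives of $h_u$ are $C^1$. Finally I would extend to $\R^n\mns\{0\}$ by $1$-homogeneity of $h_u$ in $y$.

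This regularity step is the main technical point. In particular, justifying differentiability up to $t=0$ needs $u$ to be $C^2$ up to $\del\Om$ with $\gr u\neq0$ there, which is implicit in the standing assumptions.

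\emph{Item (3).} Fix $\xi$ and differentiate $h_u(\xi,t)=\inp{X(t)}{\xi}$ in $t$, where $X(t)=\inv{\g_{\Om_t}}(\xi)$ is $C^1$ in $t$. This gives $\del_t h_u=\inp{X'(t)}{\xi}$. Differentiating $u(X(t))=t$ yields $\inp{\gr u(X)}{X'}=1$. Since $\gr u(X)=-|\gr u(X)|\,\xi$, it follows that $\inp{X'}{\xi}=-1/|\gr u(X)|$. Evaluating at $X(t)=x$ gives (3).
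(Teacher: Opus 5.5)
Your proposal is correct, but for (3) it follows a different route from the paper.

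\textbf{Items (1) and (2).} These match the paper exactly: both read off \eqref{eq:supg}, \eqref{eq:hgrh} and \eqref{eq:hgradh} on the level set $\Om_t$.

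\textbf{Item (3): the paper's route.} The paper first uses homogeneity to rewrite (1) and (2) as $h_u(-\gr u,u)=\inp{x}{-\gr u}$ and $\gr h_u(-\gr u,u)=x$. It then differentiates the first identity in the spatial variable $x$. The second identity cancels the Hessian terms $D^2u\,x$, leaving $(\del_t h_u(-\gr u,u)+1)\gr u=0$, and homogeneity rescales back.

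\textbf{Item (3): your route.} You fix the normal $\xi$ and follow the curve $t\mapsto X(t)=\inv{\g_{\Om_t}}(\xi)$. You then differentiate $h_u(\xi,t)=\inp{X(t)}{\xi}$ together with $u(X(t))=t$.

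\textbf{What each buys.} The paper's computation is shorter and needs no parametrization. However, it presupposes that $h_u$ is jointly $C^1$ in $(y,t)$, so that the chain rule applies to $x\mapsto h_u(-\gr u(x),u(x))$. The lemma asserts this regularity, but the paper's proof never justifies it. Your route needs only that $t\mapsto X(t)$ is $C^1$. You obtain this from the inverse function theorem applied to $\Theta_u$, whose differential is block-triangular with the Weingarten map and $|\gr u|$ on the diagonal. Item (3) then feeds the $C^2$ upgrade through $\gr h_u=\inv{\Theta_u}$ and $\del_t h_u=-1/|\gr u|\circ\inv{\Theta_u}$. There is no circularity, since (3) uses only the $C^1$ step. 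So your argument actually proves the regularity part of the lemma on $\R^n\mns\{0\}\times(0,\|u\|_{L^\infty})$, which the paper's proof leaves unaddressed.

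\textbf{The endpoint $t=0$.} Your caveat here is apt. The standing hypothesis $u\in C^2(\Om\cap N)$ is interior only. Including the closed endpoint in $[0,\|u\|_{L^\infty})$ therefore needs $C^2$ regularity of $u$ up to $\del\Om$ with $\gr u\neq 0$ there. Neither you nor the paper derives this from the stated assumptions.
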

\begin{proof}
At $x\in\del\Om_{t}$, using $\g_{\Om_t}(x)= -\gr u(x)/|\gr u(x)|$, \eqref{eq:hu} and 
recalling \eqref{eq:supg}, we have 
    \begin{align*}
        h_{u}\left(-\gr u/|\gr u|, u\right) = h_{u}(\g_{\Om_{t}}(x), t) 
        = h_{\Om_{t}}(\g_{\Om_{t}}(x)) 
        = \inp{x}{\g_{\Om_{t}}(x)} 
        = \inp {x}{-\gr u(x)/|\gr u(x)|}, 
    \end{align*}
   for any $ t\in (0, \|u\|_{L^{\infty}})$. This, together with \eqref{eq:omdomt},  concludes $(1)$. Similarly, $(2)$ follows from \eqref{eq:hgrh} and \eqref{eq:hgradh} in the same way using $\nabla h_{\Om_{t}}(\xi) = \g^{-1}_{\Om_{t}}(\xi)$ and $D^{2}h_{\Om_{t}}(\xi)\xi = 0$ for $\xi=-\gr u/|\gr u|$.
To prove $(3)$, first note that the using homogeneity \eqref{eq:homgradhom} on $(1)$ and $(2)$, we have
    \begin{equation}\label{eq:12hom}
        h_{u}(-\nabla u,u) = \langle x, -\nabla u\rangle \quad \text{ and } \quad \nabla h_{u}(-\nabla u,u) = x.
    \end{equation}
Differentiating the first identity of \eqref{eq:12hom}, the full gradients of both sides yield
    \begin{align*}
        -D^{2}u\,\nabla h_{u}(-\nabla u,u) + \partial_{t}h_{u}(-\nabla u,u) \nabla u =  - (D^{2}u)x-\gr u,
    \end{align*}
  and the second identity of \eqref{eq:12hom} cancels the first terms of the above, leading to 
    \begin{equation*}
        (\del_{t}h_{u}(-\nabla u,u)+1)\nabla u = 0.
    \end{equation*}
Since $|\gr u|>0$ in $\Om\cap N$, it implies $\del_{t}h_{u}(-\nabla u,u) = -1$
which is enough to conclude $(3)$ as $\del_{t}$ does not change the homogeneity of the first variable of $h_{u}$. This completes the proof. 
\end{proof}
Similarly as \eqref{eq:hu}, we denote the Weingarten map and its inverse corresponding to $u$ as 
\begin{equation}\label{eq:wu}
\Wg_u(x,t) :=\Wg_{\Om_t} (x), \quad\text{and}\quad 
\inv{\Wg_u}(y,t) :=\inv{\Wg_{\Om_t}} (y), 
\end{equation}
for $0\leq t\leq \|u\|_{L^\infty}$. As in Section \ref{sec:prelim}, we choose an orthonormal basis $\{e^1,\ldots,e^{n-1}, -\gr u/|\gr u|\}$ where $\{e^i : i\in\{1,\ldots, n-1\}\}$ span the tangent space $T_{-\gr u/|\gr u|} (\Snn)$. As in Section \ref{sec:prelim}, we restrict to $h_u: \Snn\times [0, \|u\|_{L^\infty}]\to \R$ and 
find the identities on second derivatives of $h_u$ with respect to the covariant derivatives as \eqref{eq:cov} on suitable coordinate charts that yield \eqref{eq:grhcov}. %We have the following. 
\begin{lemma}\label{lem:ids2}
Let $\Om$ and $u\in C^2(\Om\cap N)$ be as above, 
 $\{e^1,\ldots, e^{n-1}\}$ be an orthonormal frame field of $\Snn$ such that the unit vectors $e^i=e^i(-\gr u/|\gr u|)\in \Snn$ span the tangent space $T_{-\gr u/|\gr u|} (\Snn)$, and the covariant derivatives be defined as in \eqref{eq:cov} with respect to  suitable local coordinate charts related to the frame. Then the following holds at any $x\in\Omega\cap N$:
    \begin{enumerate}
        \item $D^2 h_u\left(-\gr u/|\gr u|, u\right) = \sum_{i,j=1}^{n-1} 
        \inp{\inv{\mathcal W_u}\left(-\gr u/|\gr u|, u\right) e^j}{e^i} e^i \otimes e^j $;
        \item $\cov h_u\left(-\gr u/|\gr u|, u\right)=x+h_u\left(-\gr u/|\gr u|, u\right)\gr u/|\gr u| $;
        \item $\covtwo  h_u \left(-\gr u/|\gr u|, u\right)=\inv{\mathcal W_u}\left(-\gr u/|\gr u|, u\right)- h_u\left(-\gr u/|\gr u|,u\right)\I $.
    \end{enumerate}
\end{lemma}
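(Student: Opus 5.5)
Since the level sets $\Om_t$ are $C^2_+$, each assertion is a statement about a single level set, and we reduce it to the identities of Section \ref{sec:prelim}. Fix $x\in\Om\cap N$ and put $t=u(x)$ and $\xi=-\gr u(x)/|\gr u(x)|=\g_{\Om_t}(x)$. By \eqref{eq:hu} and \eqref{eq:wu}, the first variable of $h_u(\cdot,t)$ is exactly $h_{\Om_t}$, and $\inv{\Wg_u}(\cdot,t)=\inv{\Wg_{\Om_t}}$. We then read (1)--(3) off \eqref{eq:d2wing} and \eqref{eq:grhcov} applied to $K=\Om_t$ at the point $\xi$. The frame $\{e^1,\ldots,e^{n-1},\xi\}$ is the one in \eqref{eq:basis}, with $\xi$ in the role of the distinguished normal vector.

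For (1), apply \eqref{eq:d2wing} with $K=\Om_t$. The relation $D^2h_{\Om_t}(\xi)\xi=0$ from \eqref{eq:hgradh}, which is also Lemma \ref{lem:ids1}(2), removes all entries involving $\xi$. The remaining tangential block equals $d\gr h_{\Om_t}(\xi)=d\inv{\g_{\Om_t}}(\xi)=\inv{\Wg_{\Om_t}}(\xi)$ by \eqref{eq:hgrh}. This gives (1) after substituting $\xi=-\gr u/|\gr u|$ and $t=u$.

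For (2), start from the first identity of \eqref{eq:grhcov}, $\gr h_{\Om_t}(\xi)=h_{\Om_t}(\xi)\xi+\cov h_{\Om_t}(\xi)$, where $\cov h$ is identified with its tangential vector $\sum_i(\cov_ih)e^i$. Lemma \ref{lem:ids1}(2) gives $\gr h_u(\xi,u)=x$, so $\cov h_u(\xi,u)=x-h_u(\xi,u)\xi$. Substituting $\xi=-\gr u/|\gr u|$ gives $x+h_u\,\gr u/|\gr u|$, which is (2). As a consistency check, Lemma \ref{lem:ids1}(1) says $\inp{x}{\xi}=h_u$, so the right-hand side $x-\inp{x}{\xi}\xi$ is indeed tangent to $\Snn$ at $\xi$.

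For (3), use the second identity of \eqref{eq:grhcov}, $\inv{\Wg_K}(\xi)=h_K(\xi)\I+\covtwo h_K(\xi)$, with $K=\Om_t$, and rearrange. Here $\I$ is the identity on $T_\xi\Snn$. If one wants to avoid quoting \eqref{eq:grhcov}, the derivation runs as follows. Use the hemisphere chart centered at $\xi$, $\chi(z)=\sum_i z_ie^i+\sqrt{1-|z|^2}\,\xi$. Differentiate $h\circ\chi$ twice at $z=0$, using $\del_j\chi(0)=e^j$ and $\del_{ij}\chi(0)=-\delta_{ij}\xi$. This gives $\cov_{ij}h=\inp{D^2h(\xi)e^i}{e^j}-\delta_{ij}\inp{\gr h(\xi)}{\xi}=\inp{D^2h(\xi)e^i}{e^j}-\delta_{ij}h(\xi)$, where the last step uses \eqref{eq:hgradh}. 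Combining this with (1) gives (3).

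The main point needing care is regularity and the uniformity of the frame in $t$. All derivatives in (1)--(3) are taken in the first variable only, with $t=u(x)$ held fixed, so each identity involves a single level set $\Om_t$ and no $t$-derivative appears. It therefore suffices that $h_{\Om_t}\in C^2(\R^n\mns\{0\})$ for every fixed $t$, which holds because each $\Om_t$ is assumed $C^2_+$ (Lemma \ref{lem:ids1}). The chart is chosen pointwise at $\xi$, so the frame field's dependence on $x$ causes no trouble.
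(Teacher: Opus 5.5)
Your proposal is correct and takes essentially the paper's route: you apply \eqref{eq:d2wing} and \eqref{eq:grhcov} to the single level set $\Om_t$, with $t=u(x)$ and $\xi=-\gr u/|\gr u|$, and then use Lemma \ref{lem:ids1} (in particular $\gr h_u=x$) to read off (1)--(3). Your hemisphere-chart derivation of $\covtwo h=\inv{\Wg}-h\I$ is also correct, and it verifies the identity \eqref{eq:grhcov} that the paper simply quotes.
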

\begin{proof}
Similarly as in the proof of the above lemma, from \eqref{eq:d2wing},\eqref{eq:wu} and \eqref{eq:omdomt}, $(1)$ follows directly. 
For the rest, note that from \eqref{eq:grhcov}, \eqref{eq:hu} and \eqref{eq:wu}, we have 
\begin{align*}
\gr h_u \left(-\gr u/|\gr u|, u\right)&= -h_u\left(-\gr u/|\gr u|, u\right)\gr u/|\gr u| 
        + \cov h_u\left(-\gr u/|\gr u|, u\right);\\
  \inv{\mathcal W_u}\left(-\gr u/|\gr u|, u\right) &= h_u\left(-\gr u/|\gr u|,u\right)\I +\covtwo  h_u \left(-\gr u/|\gr u|, u\right), 
\end{align*}
and use Lemma \ref{lem:ids1} and \eqref{eq:omdomt} to conclude $(2)$ and $(3)$ 
and complete the proof. 
\end{proof}
Now, we evaluate the $p$-Laplacian in terms of the support function in the following. It shall be used to generate a sub-solution in the next subsection. 
\begin{proposition} \label{prop:plaphu}
    Given $\Om, N$ and $u\in C^2(\Om\cap N)$ as above, we have 
    \begin{align*}
        \Delta_{p}u 
        = \frac{-1}{(-\del_{t}h_{u})^{p-1}}\left[\Tr\Big[\inv{(\covtwo h_{u} + h_{u}\I)}\Big] +\frac{(p-1)}{(\del_{t}h_{u})^{2}}\left\{\inp{\inv{(\covtwo h_{u} + h_{u}\I)}\cov\del_{t}h_{u}}{\cov\del_{t}h_{u}}-\del_{t}^2h_{u}\right\} \right]
    \end{align*}
with $h_u$ evaluated at $\big(-\gr u(x)/|\gr u(x)|, u(x)\big)\in \Snn\times (0,\|u\|_{L^\infty})$ for any $x\in\Om\cap N$. 
\end{proposition}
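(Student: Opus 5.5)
The plan is to start from the non-divergence form of the $p$-Laplacian where $\gr u\neq 0$,
\[
\lap_p u = |\gr u|^{p-2}\Big(\lap u + (p-2)\inp{D^2u\,\xi}{\xi}\Big)= |\gr u|^{p-2}\Big(\sum_{i=1}^{n-1}\inp{D^2u\,e^i}{e^i} + (p-1)\inp{D^2u\,\xi}{\xi}\Big),
\]
with $\xi=-\gr u/|\gr u|$. Here the trace is computed in the orthonormal frame $\{e^1,\ldots,e^{n-1},\xi\}$ of Lemma \ref{lem:ids2}. Throughout, $|\gr u|=-1/\del_t h_u$ by Lemma \ref{lem:ids1}(3), so the prefactor is $|\gr u|^{p-2}=(-\del_t h_u)^{2-p}$. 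It therefore suffices to express the tangential block of $D^2u$, the mixed entries $\inp{D^2u\,\xi}{e^i}$, and the normal entry $\inp{D^2u\,\xi}{\xi}$ through $h_u$ and its derivatives.

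\textbf{Tangential block.} Apply Lemma \ref{lem:grad2u}(1) to the level set $\del\Om_t$, $t=u(x)$; this is legitimate because $u-t$ vanishes on $\del\Om_t$ and the lemma only needs smoothness. Combined with \eqref{eq:cofwin} and Lemma \ref{lem:ids2}(3), it gives
\[
\inp{D^2u\,e^i}{e^j}=-|\gr u|\,\big(\mathcal W_u\big)_{ij}, \qquad \mathcal W_u=\inv{(\covtwo h_u+h_u\I)} .
\]
Hence $\sum_i\inp{D^2u\,e^i}{e^i}=-|\gr u|\Tr[\inv{(\covtwo h_u+h_u\I)}]$. Multiplying by $|\gr u|^{p-2}=(-\del_th_u)^{2-p}$ produces exactly the first term $-\Tr[\cdot]/(-\del_th_u)^{p-1}$.

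\textbf{Mixed and normal entries.} For the remaining entries, differentiate the identity of Lemma \ref{lem:ids1}(3) in $x$, in its homogeneous form
\[
\del_t h_u(\xi(x),u(x))=-1/|\gr u(x)| .
\]
The chain rule gives, on the left,
\[
(D\xi)^{T}\gr\del_t h_u+\del_t^2h_u\,\gr u, \qquad D\xi=-|\gr u|^{-1}(\I-\xi\otimes\xi)D^2u ,
\]
and on the right $\gr(-1/|\gr u|)=-D^2u\,\xi/|\gr u|^2$. Only the tangential part of $\gr\del_t h_u$ enters, namely $\cov\del_t h_u$, because $(\I-\xi\otimes\xi)$ kills the $\xi$-direction (homogeneity of degree $1$ of $\del_t h_u$ in the first variable handles the radial part).

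Pairing this identity with $e^i$ gives a linear system for the mixed vector $m=(\inp{D^2u\,\xi}{e^i})_i$. The tangential block $-|\gr u|\mathcal W_u$ multiplies $\cov\del_th_u$, and solving yields
\[
m = -|\gr u|^{2}\,\mathcal W_u\,\cov\del_t h_u\cdot(\text{power of }|\gr u|).
\]
Pairing with $\xi$ gives $\inp{D^2u\,\xi}{\xi}$ as a combination of $\del_t^2h_u$ and $\inp{m}{\cov\del_t h_u}$. Substituting $m$ turns the latter into the quadratic form $\inp{\mathcal W_u\cov\del_th_u}{\cov\del_th_u}$. After replacing every $|\gr u|$ by $-1/\del_th_u$, this should give
\[
(p-1)\inp{D^2u\,\xi}{\xi}= -\frac{(p-1)}{(\del_th_u)^{2}}\,|\gr u|\,\Big\{\inp{\mathcal W_u\cov\del_th_u}{\cov\del_th_u}-\del_t^2h_u\Big\},
\]
which, after multiplying by the same prefactor, is the second bracket of the proposition.

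As a cross-check, the mixed entries can also be obtained from Lemma \ref{lem:grad2u}(2) with $\cov_j|\gr u|=\cov_j(-1/\del_th_u)$. This should agree with the chain-rule computation.

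\textbf{Main obstacle.} I expect the hardest step to be the bookkeeping in the normal entry. One must track correctly which derivatives of $h_u$ are taken on $\R^n\mns\{0\}$ versus covariantly on $\Snn$, and use $D^2h_u\,\xi=0$ (Lemma \ref{lem:ids1}(2)) together with homogeneity to drop radial contributions. The overall signs (powers of $-\del_th_u$) must also come out so that the quadratic term enters with the sign opposite to $\del_t^2h_u$. I would first verify the formula on a radial example, $u$ a function of $|x|$ with $h_u(\xi,t)=r(t)$, to fix the signs before writing the general computation.
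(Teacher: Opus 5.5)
Your proposal is correct, and the steps you left hedged close exactly. Rearranging the differentiated identity gives
\[
D^2u\,\xi=|\gr u|\,D^2u\,\cov\del_t h_u+|\gr u|^3\,\del_t^2h_u\,\xi .
\]
Pairing with $e^i$ gives $m=-|\gr u|^2\,\Wg_u\cov\del_t h_u$ outright, with nothing to invert and no extra power of $|\gr u|$. Pairing with $\xi$ gives $\inp{D^2u\,\xi}{\xi}=-|\gr u|^3\{\inp{\Wg_u\cov\del_t h_u}{\cov\del_t h_u}-\del_t^2h_u\}$, which is your displayed formula since $|\gr u|^2=1/(\del_t h_u)^2$.

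Your route differs from the paper's in how the cross term is handled. The two arguments share a skeleton:
\begin{itemize}
\item the non-divergence form of $\lap_p u$;
\item the tangential part of the trace via Lemma \ref{lem:grad2u}(1) and \eqref{eq:cofwin};
\item differentiation of Lemma \ref{lem:ids1}(3).
\end{itemize}
The paper, however, keeps only the normal component $\inp{\gr(|\gr u|)}{\pnu}$ of that differentiated identity. It computes the leftover term $\inp{D(\pnu)^T\pnu}{\cov\del_t h_u}$ by differentiating a second identity, $\gr h_u(-\pnu,u)=x$ from Lemma \ref{lem:ids1}(2), transposing, and using $D^2h_u\,e^j=\inv{\Wg_u}e^j$. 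This gives $D(\pnu)^T\pnu=|\gr u|\Wg_u\cov\del_t h_u$. Since $D(\pnu)^T\pnu=-m/|\gr u|$ (viewing $m$ as the tangent vector $\sum_i m_ie^i$), this is the same quantity you compute.

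You instead read off the mixed entries from the tangential components of the single differentiated identity. For this you feed in the full tangential block of $D^2u$, not only its trace, from Lemma \ref{lem:grad2u}(1).
\begin{itemize}
\item Your version is shorter: one differentiation plus a lemma the paper already invokes.
\item The paper's version stays within the support-function calculus. It derives the inverse relation between $D^2h_u$ and the differential of the Gauss map directly from $\gr h_u=x$, rather than from the second fundamental form of the level set.
\end{itemize}

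Your proposed cross-check through Lemma \ref{lem:grad2u}(2) is in fact the most direct of the three. At fixed $t$ one has $\cov_j|\gr u|=\cov_j(-1/\del_t h_u)=|\gr u|^2\cov_j\del_t h_u$, and with \eqref{eq:cofwin} this gives $m$ in one line. The paper states that part of the lemma but does not use it in this proof.
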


\begin{proof}
We evaluate $u$ and its derivatives at every $\del\Om_t$ and 
make use of the identities of the above lemmas to obtain relations that hold identically for each $t\in (0,\|u\|_{L^\infty})$, thereby conclude them on $\Om\cap N$ from 
\eqref{eq:omdomt}, similarly as in the proof of the above lemmas. 

First, with respect to the orthonormal basis $\{e^1,\ldots,e^{n-1}, -\gr u/|\gr u|\}$ as in 
Lemma \ref{lem:ids2},  
we decompose the trace to obtain 
\begin{align*}
            \lap u = \Tr(D^{2}u) 
                    &= \sum_{i=1}^{n-1}\inp{D^{2}u\, e^{i}}{e^{i}} 
                    + \inp{D^{2}u\, \pnu}{\pnu} \\
                    &= -|\nabla u|\Tr(\Wg_{u}) + \inp{D^{2}u\, \pnu}{\pnu},
     \end{align*}
where the last equality follows from Lemma \ref{lem:grad2u}, \eqref{eq:cofwin} and \eqref{eq:wu}.       
Using the above together with the fact that $\gr (|\gr u|)=D^2u\, \pnu$, 
we obtain the $p$-Laplacian as 
\begin{equation}\label{eq:plap1}
\begin{aligned}
            \lap_{p}u &= |\nabla u|^{p-2}\Big[\lap u + (p-2)\inp{D^{2}u\, \gr u}{\gr u}/|\gr u|^2\Big] \\
            &= |\nabla u|^{p-2}\Big[-|\nabla u|\Tr(\Wg_{u}) + (p-1)\inp{D^{2}u\, \pnu}{\pnu}\Big] \\
            &= -|\nabla u|^{p-1} \Big[\Tr(\Wg_{u}) - \frac{(p-1)}{|\gr u|}
            \inp{\gr (|\gr u|)}{\pnu}\Big] .
        \end{aligned}
\end{equation}
Now, recalling $(3)$ of Lemma \ref{lem:ids1}, we shall use 
$|\gr u|=-1/\del_{t}h_{u}\left( -\gr u/|\gr u|, u\right)$ along with the other identites. To evaluate the last term of \eqref{eq:plap1} in terms of $h_u$, we note that 
\begin{align*}
            \nabla(|\nabla u|) &= \nabla\Big(-1/\del_{t}h_{u}\left( -\gr u/|\gr u|, u\right)\Big)= \frac{1}{(\del_{t}h_{u})^{2}}\nabla\Big(
            \del_{t}h_{u}\left( -\gr u/|\gr u|, u\right)\Big)\\
            &= \frac{1}{(\del_{t}h_{u})^{2}}\Big[-D\left(\pnu\right)\nabla
            \del_{t}h_{u}\left( -\gr u/|\gr u|, u\right) + \del_{t}^2h_{u}\left( -\gr u/|\gr u|, u\right)\nabla u\Big].
        \end{align*}
Taking the inner product with $\nabla u/|\nabla u|$ on the above, we obtain
\begin{equation*}
\begin{aligned}
            \inp{\nabla(|\nabla u|)}{\pnu} &= \frac{1}{(\del_{t}h_{u})^{2}} \left[-\inp{D\left(\pnu\right)\nabla\del_{t}h_{u}}{\pnu} + |\nabla u|\del_{t}^{2}h_{u}\right] \notag\\
            &= \frac{1}{(\del_{t}h_{u})^{3}}\left[-(\del_{t}h_{u})\inp{D\left(\pnu\right)\nabla\del_{t}h_{u}}{\pnu} - \del_{t}^{2}h_{u}\right],
        \end{aligned} 
\end{equation*}
 which is applied to \eqref{eq:plap1} along with using $\del_t h_u =-1/|\gr u|$, to obtain
 \begin{equation*}%\label{eq:plap2}
\lap_p u= \frac{-1}{(-\del_{t}h_{u})^{p-1}}\Big[\Tr(\Wg_{u}) +\frac{(p-1)}{(\del_{t}h_{u})^{2}}\left[-(\del_{t}h_{u})\inp{D\left(\pnu\right)\nabla\del_{t}h_{u}}{\pnu} - \del_{t}^{2}h_{u}\right]\Big].
\end{equation*}
To complete the proof, we need to look into the term in the right hand side of the above containing the Jacobian of $\gr u/|\gr u|$, which can be computed as 
\begin{equation*}
            D\left(\pnu\right) = \frac{1}{|\nabla u|}\Big(D^{2}u 
            -\frac{1}{|\gr u|^2} D^{2}u\,\gr u\otimes \gr u\Big).
        \end{equation*}
Notice that $D\left(\pnu\right)\gr u=0$. Furthermore, since $\del_t h_u$ is also $1$-homogeneous, we have $\del_th_u(y,t)=\inp{y}{\gr \del_t h_u(y,t)}$ and therefore
using \eqref{eq:basis} and \eqref{eq:cov} one can similarly obtain 
\begin{equation}\label{eq:grdthu}
\gr \del_t h_u \left(-\gr u/|\gr u|, u\right)= -\del_t h_u\left(-\gr u/|\gr u|, u\right)\gr u/|\gr u| 
        + \cov \del_t h_u\left(-\gr u/|\gr u|, u\right),
\end{equation}
for the chosen coordinate chart just as in the case of $h_u$. 
Using these, we have that 
\begin{equation}\label{eq:j1}
\begin{aligned}
\inp{D\left(\pnu\right)\nabla\del_{t}h_{u}}{\pnu}
&=\inp{D\left(\pnu\right)\cov\del_{t}h_{u}}{\pnu}\\
&=\inp{D\left(\pnu\right)^T\pnu}{\cov\del_t h_u}.
\end{aligned}
\end{equation}
We eliminate the last term by computing the derivative of 
$\nabla h_{u}\left( -\gr u/|\gr u|, u\right) = x$, from $(2)$ of Lemma \ref{lem:ids1}. On its $j$-th component i.e. $ \del_{j}h_{u}(\pnu, u) = x_{j}$, differentiating with $\del_i$ yields 
\begin{align*}
            \delta_{i,j} &= \partial_{i}\Big(\partial_{j}h_{u}(-\pnu, u)\Big) \\
            &= \sum_{k}\del_{k}\del_{j}h_{u}(-\pnu, u)\del_{i}\left(\frac{-\del_{k}u}{|\nabla u|}\right) + \del_{t}\del_{j}h_{u}(-\pnu, u)\del_{i}u.
        \end{align*}
In matrix form, the above reads as
        \begin{equation*}
            D\left(-\pnu\right)D^{2}h_{u}(-\pnu, u) + \nabla u\otimes\nabla\del_{t}h_{u}(\pnu, u) = \I,
        \end{equation*}
which, upon transposition, leads to 
\begin{equation}\label{eq:mj}
            D^{2}h_{u}(-\pnu, u)D\left(-\pnu\right)^T + \nabla\del_{t}h_{u}(\pnu, u)\otimes\gr u = \I.
        \end{equation}
To generate the rightmost term of \eqref{eq:j1}, first we multiply \eqref{eq:mj} with $\pnu$ to get 
$$-D^2h_u(-\pnu, u) D(\pnu)^T\pnu+|\gr u|\gr\del_t h_u(-\pnu, u) = \pnu,$$
and then take inner product with respect to any $e^j$ to obtain 
$$ \inp{D^2h_u(-\pnu, u) D(\pnu)^T\pnu}{e^j}
=|\gr u|\inp{\gr\del_t h_u(-\pnu, u)}{e^j}.$$
%$$ \inp{ D(\pnu)^T\pnu}{D^2h_u(-\pnu, u)e^j}
%=|\gr u|\inp{\gr\del_t h_u(-\pnu, u)}{e^j}.$$
Now, from $(1)$ of Lemma \ref{lem:ids2} and \eqref{eq:basis}, it is easy to see that  
$D^2h_u(-\pnu, u)e^j=\inv{\Wg_u} e^j$ and recalling $(2)$ of Lemma \ref{lem:ids1}, we have $D^2h_u(-\pnu, u)\pnu =0$. Therefore, 
taking summation over $j\in\{1,\ldots, n-1\}$ of the above and using from \eqref{eq:basis} and \eqref{eq:grdthu}, 
we have $$\inv{\Wg_u} D(\pnu)^T\pnu = |\gr u|\cov\del_t h_u.$$
Thus, we have obtained
\begin{equation}\label{eq:j2}
D(\pnu)^T\pnu = |\gr u|\Wg_u\cov\del_t h_u. 
\end{equation}
The $p$-Laplacian formula above, together with \eqref{eq:j1} and \eqref{eq:j2}, can be re-written as
$$ \lap_p u= \frac{-1}{(-\del_{t}h_{u})^{p-1}}\Big[\Tr(\Wg_{u}) +\frac{(p-1)}{(\del_{t}h_{u})^{2}}\left[\inp{\Wg_u\cov\del_t h_u}{\cov\del_t h_u} - \del_{t}^{2}h_{u}\right]\Big],$$
and the proof is concluded from 
$\Wg_u=\inv{(\covtwo h_{u} + h_{u}\I)}$ on above, recalling %$(3)$ of  
Lemma \ref{lem:ids2}. 
\end{proof}
Now, in addition to the above, we make a further assumption that the given function $u_0$ is constant on the inner boundary, i.e. $\del N\cap \Om=\{u_0=\eps_0\}$ for some $\eps_0>0$, so that 
\begin{equation}\label{eq:defTom}
\T (\Om)=\int_{\del \Om} (h_\Om \circ \g_\Om)| \gr u|^{p-1}\, d\h^{n-1} 
\end{equation}
is consistent with \eqref{eq:defT}, if $u=u_\Om$ solves the Dirichlet problem
\begin{equation}\label{eq:omdir1}
 \begin{cases}
  \dv \big(|\gr u|^{p-2}\gr u\big)=0,\ \ &\text{in}\ \Om\cap N;\\
  u(x)= 0, \ \  &\forall\ x\in \del \Om\cap N;\\
  u(x)= \eps_0,\ \  &\forall\ x\in \del N\cap \Om.
 \end{cases}
\end{equation}
The following lemma relates the $p$-harmonic functions on sub-level sets solving 
\begin{equation}\label{eq:omdir1t}
 \begin{cases}
  \dv \big(|\gr u_{\Omega_{t}}|^{p-2}\gr u_{\Omega_{t}}\big)=0,\ \ &\text{in}\ \Om_t\cap N;\\
  u_{\Omega_{t}}(x)= 0, \ \  &\forall\ x\in \del \Om_t\cap N;\\
  u_{\Omega_{t}}(x)= \eps_0,\ \  &\forall\ x\in \del N\cap \Om_t,
 \end{cases}
\end{equation}
where $\Om_t=\{u>t\}$ is as in \eqref{eq:omt} with $u$ solving  \eqref{eq:omdir1}. The neighborhood $N$ is chosen upto possible reduction such that $\del N\cap \Om_t=\del N\cap \Om$ for all $0\leq t\leq \eps_0/2$. 
\begin{lemma}\label{lem:phsublev}
Given $u$ that solves \eqref{eq:omdir1} and $\Om_t=\{u>t\}$, let $u_{\Omega_{t}}$ be the solution of the Dirichlet problem \eqref{eq:omdir1t}. 
Then, we have the following for $0\leq t\leq\eps_0/2$:
\begin{equation}\label{eq:uomt}
(i)\ \ u_{\Omega_{t}} = \frac{u-t}{1-t/\eps_0},\qquad
(ii)\ \ \dot{u} := \frac{\partial}{\partial t}\Big|_{t=0}u_{\Omega_{t}}= u/\eps_0-1
\end{equation}
\end{lemma}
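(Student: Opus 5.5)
The plan is to verify directly that the candidate $v_t := (u-t)/(1-t/\eps_0)$ solves the Dirichlet problem \eqref{eq:omdir1t}. Uniqueness of weak solutions, which follows from the comparison principle in Theorem \ref{thm:plap}(1), then identifies $v_t$ with $u_{\Om_t}$ and proves $(i)$. Part $(ii)$ will follow by differentiating the explicit formula in $t$.

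\emph{Step 1: $v_t$ is $p$-harmonic.} Note that $v_t$ is an affine function of $u$, namely $v_t = a u + b$ with $a = (1-t/\eps_0)^{-1} > 0$ for $0\le t\le \eps_0/2$ and $b=-ta$. Hence $\gr v_t = a\gr u$, and so
$$|\gr v_t|^{p-2}\gr v_t = a^{p-1}|\gr u|^{p-2}\gr u.$$
Therefore, for every test function $\phi\in C^\infty_0(\Om_t\cap N)$,
$$\int |\gr v_t|^{p-2}\inp{\gr v_t}{\gr\phi}\dx = a^{p-1}\int |\gr u|^{p-2}\inp{\gr u}{\gr\phi}\dx = 0,$$
since $\Om_t\cap N\subset \Om\cap N$, where $u$ is $p$-harmonic. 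We also have $v_t\in W^{1,p}(\Om_t\cap N)$, because $u$ is.

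\emph{Step 2: boundary values.} On $\del\Om_t\cap N = \{u=t\}\cap N$ we have $v_t=0$. On $\del N\cap\Om_t$ we use the choice of $N$, which gives $\del N\cap\Om_t=\del N\cap\Om$ for $t\le\eps_0/2$. There $u=\eps_0$, so
$$v_t=\frac{\eps_0-t}{1-t/\eps_0}=\eps_0.$$
Thus $v_t$ has exactly the boundary data of \eqref{eq:omdir1t}. Applying the comparison principle twice, in each direction, to $v_t$ and $u_{\Om_t}$ gives $v_t=u_{\Om_t}$. This is $(i)$.

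\emph{Step 3: the derivative.} Fix $x\in\Om\cap N$ with $u(x)>0$, so that $x\in\Om_t$ for all small $t$. Then
$$\frac{\partial}{\partial t}\frac{u-t}{1-t/\eps_0}=\frac{-(1-t/\eps_0)+(u-t)/\eps_0}{(1-t/\eps_0)^2},$$
which at $t=0$ equals $u/\eps_0-1$. This gives $(ii)$. The derivative should be understood pointwise on $\Om\cap N$, where $x\in\Om_t$ for $t$ small; alternatively, one can read the formula as the derivative of the zero extensions. On $\del\Om$ the formula gives $-1$, which matches the expected normal variation.

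No serious obstacle is expected. The only delicate point is bookkeeping of domains: the inner boundary $\del N\cap\Om_t$ must not move with $t$, which is guaranteed by the stated reduction of $N$. We also need that uniqueness applies on the ring $\Om_t\cap N$ with the trace data; this holds since both functions are continuous up to the boundary of the ring.
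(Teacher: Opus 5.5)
Your proposal is correct and matches the paper's proof: you verify that the affine rescaling $(u-t)/(1-t/\eps_0)$ is $p$-harmonic in $\Om_t\cap N$ with boundary values $0$ and $\eps_0$ (using $\del N\cap\Om_t=\del N\cap\Om$), invoke uniqueness, and then differentiate. One caveat on your aside: reading $(ii)$ as the derivative of the zero extensions is only valid in $\Om\cap N$, because at points of $\del\Om$ and outside $\Om$ the zero extensions vanish for all $t\ge 0$, so the formula should be read pointwise in $\Om\cap N$, as in your main argument.
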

\begin{proof}
Let $u_t= (u-t)/(1-t/\eps_0)$, we note that $\lap_p u_t=0 $ in $\Om_t\cap N$. Also, since $\del\Om_t=\{u=t\}$, hence $u_t=0$ on $\del \Om_t\cap N$ and from boundary condition of \eqref{eq:omdir1}, $u_t=\eps_0$ on $\del N\cap \Om_t=\del N\cap \Om$. Therefore, $u_t$ solves \eqref{eq:uomt} and 
$(i)$ follows from uniqueness; $(ii)$ follows from computation.
\end{proof}

To proceed to prove the Brunn-Minkowski inequality of \eqref{eq:defTom}, we require a limiting characterization of the functional \eqref{eq:defTom} in terms of the defining function
instead of the gradient. This does not follow easily as in the capacitary case in \cite{CS, CNSXYZ} etc. since our case involves a finite boundary where the $p$-harmonic functions vanish. It requires deeper analysis of the boundary behavior of $p$-harmonic functions. We require the following boundary Harnack inequality for the $p$-Laplacian, which is a deep result due to Lewis-Nystr\"om \cite[Lemma 4.28]{LN1}. 
\begin{lemma}[Lewis-Nystr\"om]\label{lem:LN}
Let $\Omega\subset\mathbb{R}^n$ be a bounded Lipschitz domain, $x_{0}\in\partial\Omega$ and $u:\bar{\Omega}\cap \bar{B}_{r}(x_{0})\to \R$ be a positive, $p$-harmonic function in $\Omega\cap B_{r}(x_{0})$ with $u=0$ on $\partial\Omega \cap B_{r}(x_{0})$, for $0<r<r_{0}$ and $1<p<\infty$. Then there exists $\xi\in \mathbb{S}^{n-1}$ and 
constants 
$ c, \Lambda >1$ depending on $n,p$, Lipschitz constant of $\Omega$, such that we have, 
\begin{equation*}
    \frac{1}{\Lambda}\frac{u(x)}{\mathrm{dist}(x,\partial\Omega)} \leq \langle\nabla u(x), \xi\rangle \leq |\nabla u(x)| \leq \Lambda \frac{u(x)}{\mathrm{dist}(x,\partial\Omega)},
\end{equation*}
for all $x\in\Omega\cap B_{r/c}(x_{0})$. Moreover, $\xi\in \Snn$ can be chosen independently of $u$. 
\end{lemma}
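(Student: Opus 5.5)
The plan is to localize, get the upper bound from interior regularity, and obtain the lower bound by a monotonicity argument in a fixed cone direction. Since $\Om$ is Lipschitz, after a rotation and translation we can write $\Om\cap B_{r}(x_0)$ as the region above the graph of a Lipschitz function $\phi$ with $\|\gr\phi\|_\infty\le M$. We then take $\xi=e_n$, the axis of the vertical cone. It depends only on the geometry of $\del\Om$ near $x_0$, which already gives the final assertion that $\xi$ is independent of $u$. Every point $x\in\Om\cap B_{r/c}(x_0)$ then satisfies $x+s\xi\in\Om$ for small $s>0$. Moreover $\dist(x,\del\Om)$ is comparable, with constants depending on $M$, to the vertical distance $x_n-\phi(x')$.

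\textbf{Upper bound.} This part is routine. For $x\in\Om\cap B_{r/c}(x_0)$ let $d=\dist(x,\del\Om)$. The interior $C^{1,\alpha}$ estimate of Theorem \ref{thm:plap}, rescaled to $B_{d/2}(x)$, gives $|\gr u(x)|\le C d^{-1}\sup_{B_{d/2}(x)}u$. The interior Harnack inequality then bounds this supremum by $C u(x)$. Hence $\inp{\gr u}{\xi}\le|\gr u|\le\Lambda u/d$.

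\textbf{Lower bound.} This is the heart of the matter and the main obstacle. I would proceed in the following order.
\begin{enumerate}
\item Establish the standard boundary toolkit for positive $p$-harmonic functions vanishing on a Lipschitz graph: the Carleson estimate $u\le Cu(a_\rho)$ on $B_\rho(y)\cap\Om$ with $a_\rho=y+\rho\xi$, Harnack chains, and the comparison $u(x)\approx u(a_\rho)(d/\rho)^{\beta}$ type bounds.
\item Prove a boundary Harnack principle: if $u,v$ are two such functions, then $u/v$ is bounded above and below and H\"older continuous up to $\del\Om$ near $x_0$. For linear equations this is classical. Here the nonlinearity prevents taking differences, so the step I would try first is the Lewis--Nystr\"om device, in two parts.
\begin{enumerate}
\item Show that $\tilde u(x)=u(x)-u(x-s\xi)$, or more usefully the directional derivative $\inp{\gr u}{\xi}$, solves the linearized equation $\dv(A(x)\gr w)=0$ with $A=|\gr u|^{p-2}\big(\I+(p-2)\tfrac{\gr u}{|\gr u|}\otimes\tfrac{\gr u}{|\gr u|}\big)$.
\item Show that the weight $|\gr u|^{p-2}$, which by the upper bound is comparable to $(u/d)^{p-2}$, is an $A_2$-type doubling weight. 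This is what makes the Fabes--Kenig--Serapioni theory of degenerate elliptic equations, including its Harnack and boundary Harnack principles, available.
\end{enumerate}
\item Compare $u$ with its translates $u_s(x)=u(x+s\xi)$. Each $u_s$ is $p$-harmonic and positive on $\Om$, since $\Om$ is star-shaped in direction $\xi$ locally; by comparison, $u_s\ge u$. The quantitative boundary Harnack then yields $u_s(x)-u(x)\ge c\,s\,u(x)/d$ for $s\approx d$. Dividing by $s$ and using step 2 to pass to the derivative, via the Harnack inequality for the linearized equation, gives $\inp{\gr u(x)}{\xi}\ge u(x)/(\Lambda d)$.
\end{enumerate}

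The delicate point is showing, uniformly in $u$, that the weight is doubling and that $\inp{\gr u}{\xi}$ is a genuine positive solution rather than merely nonnegative. I expect to get this by an induction or scale-by-scale argument from the Lipschitz constant, using the H\"older continuity of $u/u_s$. I would follow \cite{LN1} for this step rather than reproduce it in full.
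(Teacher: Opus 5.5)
\emph{Overall: your upper bound is correct and standard, but your lower-bound roadmap contains a circular step and a false comparison, so it does not actually reduce the lemma to anything weaker than the citation the paper makes.}

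The paper gives no proof of this lemma. It quotes it from Lewis--Nystr\"om \cite[Lemma 4.28]{LN1} and uses it as a black box in Proposition \ref{prop:limchar}.

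\textbf{What is fine.} Your choice of $\xi=e_n$ from the local graph representation is fine. So is your upper bound: the rescaled interior gradient bound plus Harnack works. It needs the quantitative interior Lipschitz estimate for $p$-harmonic functions, not just the qualitative Theorem \ref{thm:plap}, but that is standard.

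\textbf{Step 2(b) is circular.} The upper bound gives only $|\gr u|\le\Lambda u/d$, so it cannot give $|\gr u|^{p-2}\approx (u/d)^{p-2}$. That would need $|\gr u|\gtrsim u/d$, which is itself part of the lemma's conclusion.
\begin{itemize}
\item Without it, the linearized matrix $A$ may degenerate (for $p>2$) or blow up (for $p<2$) near $\del\Om$.
\item There is then no a priori reason for $|\gr u|^{p-2}$ to be doubling or $A_2$.
\item So neither the Fabes--Kenig--Serapioni Harnack inequality nor a boundary Harnack principle for the linearized equation is available at that stage.
\end{itemize}
The lemma plays exactly this enabling role in Lewis--Nystr\"om's work. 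Having a direction $\xi$ common to all such $u$ gives $\inp{\tau\gr u+(1-\tau)\gr v}{\xi}\gtrsim (\tau u+(1-\tau)v)/d$. That is what places $u$ and $u-v$ under FKS-type linear theory, and H\"older continuity of $u/v$ follows from it. Your plan reverses this order.

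\textbf{Step 3 fails as written.} The function $u$ is given only on $\Om\cap B_r(x_0)$. The comparison region for $u$ and $u_s=u(\cdot+s\xi)$ therefore has a boundary portion inside $\Om$ on which nothing orders $u_s$ against $u$, so the comparison principle gives nothing.

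Moreover, $u_s\ge u$ is false in general. Take $p=2$, $\Om$ coinciding with $\{x_n>0\}$ near $x_0=0$, $r=1$, and $u(x)=x_n|x-y|^{-n}$ with $y=(1+\delta)e_1$.
\begin{itemize}
\item One computes $\del_n u=|x-y|^{-n-2}(|x-y|^2-nx_n^2)$.
\item At $x=(1-\delta)e_1+10\delta e_n$ we have $|x|^2=1-2\delta+101\delta^2<1$ for small $\delta>0$.
\item There $|x-y|^2=104\delta^2<100n\delta^2=nx_n^2$, so $\del_n u(x)<0$.
\end{itemize}
Monotonicity in $\xi$, even just $\inp{\gr u}{\xi}\ge 0$, holds only on $B_{r/c}(x_0)$, and proving that is part of the lemma.

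\textbf{The passage to the derivative is also circular.} A difference-quotient bound $u(x+s\xi)-u(x)\gtrsim s\,u(x)/d$ at a single scale $s\approx d$ is actually elementary (Carleson estimate, boundary H\"older decay, Harnack). But it only produces some point of the segment $[x,x+s\xi]$ where $\inp{\gr u}{\xi}\gtrsim u(x)/d$. Moving that bound to $x$ requires a Harnack inequality for $\inp{\gr u}{\xi}$ under the linearized operator. That again is the non-degeneracy being proved.

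Once these steps are removed, your lower bound amounts to ``follow \cite{LN1}'', which is exactly the citation the paper makes. Your outline does not itself supply the non-circular mechanism that Lewis--Nystr\"om's argument provides.
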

The following is a technical lemma which would be required. 
\begin{lemma}\label{lem:apptech}
Given $a, b\in \R^n$, we have the following:
\begin{enumerate}
\item if $\inp{a}{b}\geq 0$ then we have $0\leq \int_{\Snn} \inp{a}{\om}\inp{\om}{b} \,d\mathcal{H}^{n-1}(\om) \leq \upomega_n\inp{a}{b}$;\\
\item if $\inp{a}{b}\leq 0$ then we have $\upomega_n\inp{a}{b}\leq \int_{\Snn} \inp{a}{\om}\inp{\om}{b} \,d\mathcal{H}^{n-1}(\om) \leq 0$.
\end{enumerate}
\end{lemma}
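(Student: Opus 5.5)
The plan is to compute the integral exactly, rather than estimate it. It turns out to equal $\frac{\upomega_n}{n}\inp{a}{b}$, and both assertions $(1)$ and $(2)$ follow from this at once, since $0<1/n\leq 1$.

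\textbf{Step 1: reduce to moments.} By bilinearity,
\[
\int_{\Snn}\inp{a}{\om}\inp{\om}{b}\,d\h^{n-1}(\om)=\sum_{i,j=1}^n a_i b_j\int_{\Snn}\om_i\om_j\,d\h^{n-1}(\om),
\]
so it suffices to compute the second moments $M_{ij}=\int_{\Snn}\om_i\om_j\,d\h^{n-1}(\om)$.

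\textbf{Step 2: compute the moments by symmetry.}
\begin{itemize}
\item For $i\neq j$, the reflection $\om_i\mapsto-\om_i$ preserves $\h^{n-1}\on_{\Snn}$ and changes the sign of the integrand. Hence $M_{ij}=0$.
\item For $i=j$, permutation invariance of the surface measure under coordinate swaps gives that all $M_{ii}$ are equal.
\item Summing the diagonal moments and using $\sum_i\om_i^2=1$ on $\Snn$ gives $nM_{ii}=\upomega_n$.
\end{itemize}
Together these give $M_{ij}=\frac{\upomega_n}{n}\delta_{ij}$, and therefore
\[
\int_{\Snn}\inp{a}{\om}\inp{\om}{b}\,d\h^{n-1}(\om)=\frac{\upomega_n}{n}\inp{a}{b}.
\]

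\textbf{Step 3: conclude by sign cases.}
\begin{itemize}
\item If $\inp{a}{b}\geq0$, then $0\leq\frac{\upomega_n}{n}\inp{a}{b}\leq\upomega_n\inp{a}{b}$, which is $(1)$.
\item If $\inp{a}{b}\leq0$, the inequalities reverse, giving $\upomega_n\inp{a}{b}\leq\frac{\upomega_n}{n}\inp{a}{b}\leq0$, which is $(2)$.
\end{itemize}
There is no real obstacle here. The only point needing care is invoking the invariance of $\h^{n-1}\on_{\Snn}$ under orthogonal reflections and coordinate permutations. An equivalent route is to note that $\int_{\Snn}\om\otimes\om\,d\h^{n-1}(\om)$ commutes with every rotation, hence is a multiple of $\I$, and then take its trace to identify the multiple.
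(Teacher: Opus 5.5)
Your proof is correct, but it takes a different route from the paper. The paper argues pointwise. It writes $\inp{a}{\om}\inp{\om}{b}=\inp{(a\otimes b)\om}{\om}$ and treats this as a Rayleigh quotient of the rank-one matrix $a\otimes b$, whose eigenvalues are $0,\ldots,0,\inp{a}{b}$. It then integrates the resulting pointwise bound over $\Snn$.

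You instead integrate first. You compute $\int_{\Snn}\om\otimes\om\,d\h^{n-1}(\om)=\frac{\upomega_n}{n}\I$, which gives the exact value $\frac{\upomega_n}{n}\inp{a}{b}$.

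Your route is the sound one. Eigenvalue bounds on a Rayleigh quotient require a symmetric matrix, and $a\otimes b$ is symmetric only when $a$ and $b$ are parallel. In general the quadratic form is governed by the symmetric part $\frac12(a\otimes b+b\otimes a)$. Its nonzero eigenvalues are $\frac12\big(\inp{a}{b}\pm|a|\,|b|\big)$. So the pointwise inequality $0\leq\inp{a}{\om}\inp{\om}{b}\leq\inp{a}{b}$ fails in general. For instance, $a=e_1$, $b=e_2$, $\om=(e_1-e_2)/\sqrt2$ gives $-\frac12<0=\inp{a}{b}$.

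The lemma holds only after averaging over the sphere, which removes everything except the trace $\inp{a}{b}$. That is exactly what your moment computation captures.

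Your exact identity also helps downstream. In \eqref{eq:fineq}, the second term becomes exactly $-\frac1n\int_{\Om\mns\Om_s}|\gr u|^{p-1}\inp{x}{\gr u}\,dx$. The left-hand side is then $-\frac{2}{n}\int_{\Om\mns\Om_s}|\gr u|^{p-1}\inp{x}{\gr u}\,dx$, so the two-sided comparison obtained from this lemma becomes an identity with an explicit constant.
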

\begin{proof}
It follows from easily the fact that $\inp{a}{\om}\inp{\om}{b}
=\inp{(a\otimes b)\om}{\om}$ forms the Rayleigh quotient 
for any $\om\in\Snn$ and $a\otimes b$ is a rank one matrix with eigenvalues 
$0,\ldots, 0, \inp{a}{b}$. 
\end{proof}
Now, we show the limiting characterization in the following. 
\begin{proposition}\label{prop:limchar}
Let $\Om$ be a convex domain of class $C^2_+$ containing the origin, $u$ be the solution of \eqref{eq:omdir1} for some $\eps_0>0$ and $p>2$. Let 
$\Omega_{t}=\{u>t\}$ as in \eqref{eq:omt} be convex for every $0 \leq t< 
\eps_0$. Then, given $\T$ as in \eqref{eq:defTom}, 
there exists $ c = c(n,p, \eps_0, \gamma, \mathrm{diam}(\Omega)) > 0$, where $\gamma$ is modulus of convexity of $\Omega$, such that we have
    \begin{equation}\label{eq:limchar}
        \T(\Omega) = c\lim_{s\to 0^{+}}\int_{\del\Om_s} \left(\frac{u(x)}{\mathrm{dist}(x,\partial\Omega)}\right)^{p-1} d\mathcal{H}^{n-1}(x). 
    \end{equation}
\end{proposition}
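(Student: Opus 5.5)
The plan is to compare both sides of \eqref{eq:limchar} with the $p$-harmonic flux
$$\Phi(s):=\int_{\del\Om_s}|\gr u|^{p-1}\,d\h^{n-1},\qquad 0\le s<\eps_0 .$$
I would proceed in three steps.

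\emph{Step 1: the flux is constant in $s$.} Since $\g_{\Om_s}=-\gr u/|\gr u|$ on $\del\Om_s$, I apply the divergence theorem to $|\gr u|^{p-2}\gr u$ on the ring $\Om_{s_1}\mns\bar\Om_{s_2}$ for $0<s_1<s_2<\eps_0$. This ring lies in $\Om\cap N$, where $u$ is smooth with $\gr u\neq0$. The vanishing of $\lap_p u$ there gives $\Phi(s_1)=\Phi(s_2)$. The value at $s=0$ is reached by letting $s\to0^+$. This uses boundary regularity of $u$: because $\Om$ is $C^2_+$ and $u$ vanishes on $\del\Om$, $u\in C^{1,\beta}$ up to $\del\Om$ by Lieberman's estimate. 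Together with the parametrisation $\inv{\Theta_u}(\xi,s)=\inv{\g_{\Om_s}}(\xi)$ of \eqref{eq:bij}, this makes $\del\Om_s\to\del\Om$ in $C^1$ and $|\gr u|\to|\gr u|\big|_{\del\Om}$ uniformly. Hence $\Phi(s)\equiv\Phi(0)=\om_\Om(\del\Om)$.

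\emph{Step 2: identify the limit in \eqref{eq:limchar}.} For $x\in\del\Om_s$, let $x^*\in\del\Om$ be the nearest point, so $u(x)=u(x)-u(x^*)$. By the mean value theorem along the segment $[x^*,x]$, which is normal to $\del\Om$ at $x^*$, and by the uniform continuity of $\gr u$ up to $\del\Om$,
$$\frac{u(x)}{\dist(x,\del\Om)}=|\gr u(x^*)|+o(1)=|\gr u(x)|+o(1),\qquad s\to0^+,$$
uniformly in $x$. Lemma \ref{lem:LN} gives $|\gr u|\approx u/\dist(\cdot,\del\Om)$ with constant $\Lambda$. Combined with a positive lower bound for $|\gr u|$ on $\del\Om$ (Hopf, or again Lemma \ref{lem:LN}), the error term is harmless after raising to the power $p-1$. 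Since $\h^{n-1}(\del\Om_s)$ stays bounded, the limit on the right of \eqref{eq:limchar} exists and equals $\Phi(0)$. If one prefers to avoid global $C^{1,\beta}$ regularity, Lemma \ref{lem:LN} alone already gives
$$\Lambda^{1-p}\,\Phi(s)\le\int_{\del\Om_s}\Big(\frac{u}{\dist(x,\del\Om)}\Big)^{p-1}d\h^{n-1}\le\Lambda^{p-1}\Phi(s),$$
by covering $\del\Om$ with finitely many balls $B_{r/c}(x_0)$. Here the Lipschitz constant, and hence $\Lambda$, is controlled by $\gamma$ and $\diam(\Om)$.

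\emph{Step 3: extract $c$.} By \eqref{eq:defTom}, $\T(\Om)=\int_{\del\Om}(h_\Om\circ\g_\Om)\,d\om_\Om$ with $\om_\Om(\del\Om)=\Phi(0)$. I define $c:=\T(\Om)/\Phi(0)$, which is an $\om_\Om$-average of $h_\Om$. Since $0\in\Om$, this gives $c\le\diam(\Om)$, and $c\ge\min_{\Snn}h_\Om>0$. The lower bound $\min h_\Om$ is the distance from the origin to $\del\Om$; I would make it quantitative in terms of $\gamma$ and $\diam(\Om)$, after a normalising translation if needed. The dependence of $c$ on $\eps_0$ and $p$ enters through $\Lambda$ and the flux normalisation of \eqref{eq:omdir1}.

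The main obstacle is Step 2, namely passing to the limit $s\to0^+$ uniformly along the moving level sets $\del\Om_s$, where neither the integrand nor the domain of integration is fixed. I would first try the $C^{1,\beta}$-up-to-the-boundary argument. If that gives trouble near points where the nearest-point projection degenerates, I would fall back on the uniform two-sided bound from Lemma \ref{lem:LN}. The constant-flux identity of Step 1 then turns that bound into the stated comparability, with the constant depending only on $n,p,\eps_0,\gamma,\diam(\Om)$.
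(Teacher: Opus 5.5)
Your argument is correct in substance, and it takes a genuinely different route from the paper's.

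\textbf{How the two routes differ.} The paper reduces \eqref{eq:limchar} to \eqref{eq:rtp}, moving between $|\gr u|$ and $u/\dist(x,\del\Om)$ via Lemma \ref{lem:LN}, so only up to the factor $\Lambda$. It then proves $\T(\Om)\sim\int_{\del\Om}|\gr u|^{p-1}\,d\h^{n-1}$ in three stages:
\begin{enumerate}
\item it writes $\int_0^s\T(\Om_t)\,dt$ as a volume integral over the thin ring $\Om\mns\Om_s$, using the co-area formula and Lemma \ref{lem:phsublev};
\item it tests the equation against a Helmholtz/Green's-function corrector $\phi_s$;
\item it compares the resulting terms with Lemma \ref{lem:apptech}.
\end{enumerate}
There $p>2$ is used only to discard an $O(s^{p-2})$ error.

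You instead notice that this comparability is immediate from the definition. $\T(\Om)$ is the integral of $h_\Om\circ\g_\Om$ against $\om_\Om$, and $\dist(0,\del\Om)=\min_{\Snn}h_\Om\le h_\Om\circ\g_\Om\le\diam(\Om)$. Constancy of the flux $\Phi(s)$ justifies the equality in \eqref{eq:rtp}, which the paper asserts without proof. Combined with boundary $C^{1,\beta}$ regularity along normal segments, it shows that the limit in \eqref{eq:limchar} exists and equals $\om_\Om(\del\Om)$ exactly.

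This gives more than the paper's argument:
\begin{itemize}
\item existence of the limit, which two-sided bounds from Lemma \ref{lem:LN} alone do not give;
\item the explicit value $c=\T(\Om)/\om_\Om(\del\Om)$;
\item no need for $p>2$ or for the Green's-function machinery.
\end{itemize}

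\textbf{The step that fails: the normalising translation in Step 3.} $\T$ is not translation invariant, since
\[
\T(\Om+a)=\T(\Om)+\inp{a}{\int_{\del\Om}\g_\Om\,d\om_\Om}.
\]
This vector need not vanish. For example, when $\Om\cap N$ is an eccentric annulus, $|\gr u|$ is larger on the side where the ring is thinner. Moreover, $\dist(0,\del\Om)$ is not controlled by $\gamma$ and $\diam(\Om)$, because moving the origin inside $\Om$ changes neither of them.

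So what you have actually proved is $\dist(0,\del\Om)\le c\le\diam(\Om)$. A lower bound depending only on the listed parameters would need an additional argument. One option is to show that $\om_\Om$ does not concentrate near the point of $\del\Om$ nearest to $0$, and combine this with strict convexity.

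\textbf{What $c$ really is.} Your formula makes explicit that $c$ is the $\om_\Om$-average of $h_\Om\circ\g_\Om$, so it varies with $\Om$. The parameters $n,p,\eps_0,\gamma,\diam(\Om)$ can at most bound $c$ from above and below, and the paper's $\sim$ argument yields no more than that either. Consequently one cannot use a single common $c$ for different domains without further justification, as is done for $K_1,K_2,K_\lambda$ in Theorem \ref{thm:bm1}.
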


\begin{proof}
In the following, let us use the notation $\sim$ to denote equality up to multiplication by a constant 
$c = c(n,p, \gamma, \mathrm{diam}(\Omega)) > 0$, equivalently being bounded from above and below by two similar constants. Then, note that it is enough to prove
\begin{equation}\label{eq:rtp}
\T(\Omega) \sim \int_{\partial\Omega} |\nabla u|^{p-1}d\mathcal{H}^{n-1}= 
\lim_{s\to 0^{+}} \int_{\partial\Omega_s} |\nabla u|^{p-1}d\mathcal{H}^{n-1};
\end{equation}
indeed, for any $x\in\del\Om_s$ close enough to $\del\Om$  with $s>0$ is small enough, we invoke Lemma \ref{lem:LN} to have $\Lambda=\Lambda(n,p, \gamma, \mathrm{diam}(\Omega)) > 0$ such that 
$$ \frac{1}{\Lambda}\frac{u(x)}{\mathrm{dist}(x,\partial\Omega)}\leq |\gr u(x)| \leq \Lambda \frac{u(x)}{\mathrm{dist}(x,\partial\Omega)},$$
so that the limit \eqref{eq:limchar} exists and follows from \eqref{eq:rtp}. The proof of 
\eqref{eq:rtp} is in the following. 

Let 
$u_{\Omega_{t}}$ be the solution of \eqref{eq:omdir1t} for $0\leq t\leq \eps_0/2$. Recalling \eqref{eq:defT} and (1) of Lemma \ref{lem:ids1}, note that we have 
\begin{align*}
        \T(\{u>t\}) &=\int_{\{u=t\}} (h_{\Omega_{t}}\circ \g_{\Omega_{t}})
        |\nabla u_{\Omega_{t}}|^{p-1}d\mathcal{H}^{n-1}
= \int_{\{u=t\}} \left\langle x, \frac{-\nabla u}{|\nabla u|}\right\rangle|\nabla u_{\Omega_{t}}|^{p-1}d\mathcal{H}^{n-1}\\
        &= \frac{d}{dt}\left(\int_{\{u >t\}} \langle x,\nabla u\rangle|\nabla u_{\Omega_{t}}|^{p-1} \dx\right),
    \end{align*}
    where the last equality follows from the co-area formula. Integrating both sides of the above from $0$ to $s$, we get
    \begin{equation}\label{eq:point_1}
        \int_{0}^{s}\T(\{u>t\})\, dt = \int_{\{u >s\}} \langle x, \nabla u\rangle 
        |\nabla u_{\Omega_{s}}|^{p-1} dx - \int_{\{u > 0\}} \langle x,\nabla u\rangle |\nabla u|^{p-1} \dx.
    \end{equation}
We wish to replace $|\nabla u_{\Omega_{s}}|^{p-1}$ with $|\nabla u|^{p-1}$ in the first term of \eqref{eq:point_1} modulo small error terms. 
From Lemma \ref{lem:phsublev}, since 
$\frac{\del}{\del t}\big|_{t=0}\nabla u_{\Omega_{t}} = \nabla\dot{u}=\gr u/\eps_0$, we have the following;
\begin{equation}\label{eq:point_2}
\begin{aligned}
\Big|\int_{\{u>s\}}&\langle x,\nabla u\rangle\Big(|\nabla u_{\Omega_{s}}|^{p-1} - |\nabla u|^{p-1}\Big) \dx\Big| \\
&\leq C\|\gr u\|_{L^\infty}
\|\nabla u_{\Omega_{s}} - \nabla u\|_{L^{\infty}} ^{p-1}\leq C s^{p-1}\|\gr u\|_{L^\infty}
\|\gr \dot u\|_{L^\infty}^{p-1}=\frac{C}{\eps_0^{p-1}} s^{p-1}\|\gr u\|_{L^\infty}^p,
\end{aligned}
    \end{equation}
    for some $C=C(n, \diam(\Om))>0$. 
    Using \eqref{eq:point_2} we can rewrite \eqref{eq:point_1} as follows
    \begin{align*}
        \int_{0}^{s}\T \left(\{u>t\}\right) dt &= \int_{\{u>s\}} \langle x,\nabla u\rangle |\nabla u|^{p-1}\dx - \int_{\{u>0\}}\langle x,\nabla u\rangle|\nabla u|^{p-1} \dx + O(s^{p-1}) \\
        &= -\int_{\{0 < u < s\}} \langle x,\nabla u\rangle |\nabla u|^{p-1} \dx + O(s^{p-1}).
    \end{align*}
    In other words, we have shown that 
    \begin{equation}\label{eq:point_3}
        \int_{0}^{s}\T(\Omega_{t}) dt = -\int_{\Omega\setminus\Omega_{s}} \langle x, \nabla u\rangle |\nabla u|^{p-1} dx + O(s^{p-1}).
    \end{equation}
 The right hand side of \eqref{eq:point_3} is to be estimated using $p$-harmonicity in $\Omega\setminus\Omega_{s}$. For small $s>0$, since $\Om\mns\Om_s\subset \Om\cap N$, and the outer normal of $\del(\Om\mns\Om_s)$ being $-\gr u/|\gr u|$ for $\del\Om$ and $\gr u/|\gr u|$ for $\del\Om_s$, 
 the $p$-harmonicity of $u$ implies that for any $\phi\in C^1(\Om\cap N)$, we have
 \begin{equation}\label{eq:point_4}
        \int_{\Omega\setminus\Omega_{s}}|\nabla u|^{p-2}\langle\nabla u, \nabla\phi\rangle dx = -\int_{\partial\Omega}\phi|\nabla u|^{p-1}d\mathcal{H}^{n-1} + \int_{\partial\Omega_{s}}\phi|\nabla u|^{p-1}d\mathcal{H}^{n-1}.
    \end{equation}
  The right hand side of \eqref{eq:point_3} can be generated from the left hand side of the above for a choice of $\phi$ whose gradient equals $x|\gr u|$. But it may not exist in general as $x|\gr u|$ may not be a conservative field. A choice of $\phi$ close to this can be obtained using the Helmholtz-decomposition $x|\nabla u| = \nabla\phi + R$ where $\mathrm{div}(R) = 0$. Hence, $\Delta\phi = \mathrm{div}(x|\nabla u|)$ and by  prescribing boundary values, $\phi = u$ in $\partial(\Omega\setminus\Omega_{s})$ the choice is unique. In other words, we choose $\phi_{s}$ as the solution of
    \begin{equation}\label{eq:phis}
        \begin{cases}
            \Delta\phi_{s} = \mathrm{div}(x|\nabla u|), \quad &\text{in } \Omega\setminus\Omega_{s};\\
            \phi_{s} = 0,\quad &\text{on } \partial\Omega;\\
            \phi_{s} = u, \quad &\text{on } \partial\Omega_{s},
        \end{cases}
    \end{equation}
    and $R_{s}= x|\nabla u|- \nabla\phi_{s}$ is the divergence free component. 
    If we have the Green's kernel $G_{s}(x,x')$ that solves the Dirichlet problem
    \begin{equation}\label{eq:green}
        \begin{cases}
            -\Delta_{x'}G_{s}(x,x') = \delta (x'-x), \quad &\text{in}\ \Omega\setminus\Omega_{s}; \\
            G_{s}(x,x') = 0, \quad &\text{on}\ \partial(\Omega\setminus\Omega_{s}),
        \end{cases}
    \end{equation}
    then, it is well-known that $\phi_{s}$ solving \eqref{eq:phis} has the following representation
    \begin{align*}
        \phi_{s}(x) &= -\int_{\Omega\setminus\Omega_{s}} \mathrm{div}(x'|\nabla u(x')|) G_{s}(x,x')\, dx' - \int_{\partial\Omega_{s}} u(x')\langle\nabla_{x'}G_{s}(x,x'), \g_{\Om_s}(x')\rangle \, d\mathcal{H}^{n-1}(x') \\
        &= -\int_{\Omega\setminus\Omega_{s}} \mathrm{div}(x'|\nabla u(x')|)G_{s}(x,x')\, dx' - s\int_{\partial\Omega_{s}}\langle \nabla_{x'} G_{s}(x,x'),\g_{\Om_s}(x')\rangle \, d\mathcal{H}^{n-1}(x')\\
        &= -\int_{\Omega\setminus\Omega_{s}} \mathrm{div}(x'|\nabla u(x')|)G_{s}(x,x')\, dx' - s,
    \end{align*}
    where the second equality follows since $\del\Om_s=\{u=s\}$ and the third equality is not hard to see from $-\Delta_{x'}G_{s}(x,x') = \delta (x'-x)$, as in  \eqref{eq:green}. Although $G_{s}(x,x')$ has a singularity at $x=x'$, integration by parts still can be done for the above, due to the decay rate of the Green's function. Precisely, with any $\eps>0$ small enough, using standard integral by parts we have 
    \begin{align*}
        -\int_{(\Omega\setminus\Omega_{s})\setminus B_{\varepsilon}(x)} \mathrm{div}(x'|\nabla u(x')|)G_{s}(x,x') \, dx' = &\int_{(\Omega\setminus\Omega_{s})\setminus B_{\varepsilon}(x)}|\nabla u(x')|\langle x', \nabla_{x'}G_{s}(x,x') \rangle \, dx'\\ 
        & - \int_{\partial B_{\varepsilon}(x)} |\nabla u(x')|G_{s}(x,x')\langle x',\nu(x')\rangle \, d\mathcal{H}^{n-1}(x'),
    \end{align*}
    since $G_s$ vanish on $\del(\Om\mns\Om_s)$ as in \eqref{eq:green}. 
 Now, it is well known that 
\begin{equation}\label{ew:grndecomp}
G_{s}(x,x') = \Gamma(x,x') - \psi_s(x,x'), \quad\text{whenever}\quad x,x'\in\Om\mns\Om_s,\ x\neq x',
\end{equation}
 where  $\Gamma(x,x')=\Phi (|x-x'|)$ corresponds to the singular kernel of the  Newtonian potential with $\Phi$ as the fundamental solution of the Laplacian, and $\psi_s$ is a smooth function that solves the Dirichlet problem $\Delta_{x'}\psi_{s}(x,x') = 0$ in $\Om\mns\Om_s$ and $\psi_s(x,x')=\Gamma(x,x')$ on $\del(\Om\mns\Om_s)$. Hence, 
  \begin{align*}
      \lim_{\eps\to 0^+}  &\left|\int_{\partial B_{\varepsilon}(x)}|\nabla u(x')|G_{s}(x,x')\langle x',\nu(x')\rangle \, d\mathcal{H}^{n-1}(x')\right| \\
      &\leq \lim_{\eps\to 0^+} c(n,s,\Omega) ||\nabla u||_{L^{\infty}} 
      |\Phi(\eps)|\, \mathcal{H}^{n-1}(\partial B_{\varepsilon}(x))\\
     &= \lim_{\eps\to 0^+}  c(n,s,\Omega) ||\nabla u||_{L^{\infty}}
     \begin{cases}
 \eps  &\text{for}\ n\neq 2\\
 \eps\log(\eps)&\text{for}\ n= 2
\end{cases} \ =0.
    \end{align*}
  Therefore, from all of the above, we have obtained 
  \begin{equation}\label{eq:phispv}
  \begin{aligned}
\phi_{s}(x) &= \lim_{\eps\to 0^+}\int_{(\Omega\setminus\Omega_{s})\setminus B_{\varepsilon}(x)} |\nabla u(x')|\langle x', \nabla_{x'}G_{s}(x,x')\rangle \, dx' -s\\
&= \text{p.v.}\int_{(\Omega\setminus\Omega_{s})} |\nabla u(x')|\langle x', \nabla_{x'}G_{s}(x,x')\rangle \, dx' -s. 
\end{aligned}
    \end{equation}
The derivative of $\phi_s$ can be obtained first by differentiating inside the integral of its Green's representation above. We refer to \cite[Lemma 4.1]{GT} for the differentiation of the Newtonian potential and conclude 
\begin{equation}\label{eq:diphi}
\del_{x_i}\phi_s (x)= -\int_{\Omega\setminus\Omega_{s}} \mathrm{div}(x'|\nabla u(x')|)\, \del_{x_i}G_{s}(x,x')\, dx'.
\end{equation}
 But now, performing integral by parts on \eqref{eq:diphi} similarly as above, would leave a residue since the decay rate has altered with differentiation. Since 
 $\del_{x_i} \Gamma (x,x')=-\frac{1}{\upomega_n} (x_i-x'_i)/|x-x'|^n$ for all $n\geq 1$, hence we compute the residue as 
 \begin{align*}
      \lim_{\eps\to 0^+}  \int_{\partial B_{\varepsilon}(x)}&|\nabla u(x')|
      \del_{x_i} G_{s}(x,x')\langle x',\nu(x')\rangle \, d\mathcal{H}^{n-1}(x') \\
      &= \lim_{\eps\to 0^+}  \int_{\partial B_{\varepsilon}(x)}|\nabla u(x')|
      \del_{x_i} \Gamma (x,x')\langle x',\nu(x')\rangle \, d\mathcal{H}^{n-1}(x')\\
      &= \lim_{\eps\to 0^+}  \int_{\partial B_{\varepsilon}(x)}
      -\frac{1}{\upomega_n}\frac{x_i-x'_i}{|x-x'|^n}
      |\nabla u(x')|\langle x',\nu(x')\rangle \, d\mathcal{H}^{n-1}(x')\\
      &=  -\frac{1}{\upomega_n}\lim_{\eps\to 0^+} \int_{\Snn} \frac{\om_i}{\eps^{n-1}} |\gr u(x-\eps\om)|
      \inp{x-\eps\om}{-\om}\eps^{n-1}\, d\mathcal{H}^{n-1}(\om)\\
      &=\frac{1}{\upomega_n}\int_{\Snn} \om_i |\gr u(x)|
      \inp{x}{\om}\, d\mathcal{H}^{n-1}(\om),
    \end{align*}
from dominated convergence theorem. 
Therefore, carrying out integral by parts on \eqref{eq:diphi} similarly as the case of $\phi_s$ before and incorporating the above residue leads to 
\begin{equation*}%\label{eq:diphispv}
  \begin{aligned}
\del_{x_i}\phi_{s}(x) 
= \text{p.v.}&\int_{(\Omega\setminus\Omega_{s})} |\nabla u(x')|\langle x', \nabla_{x'}\del_{x_i}G_{s}(x,x')\rangle \, dx'  -\frac{1}{\upomega_n}|\gr u(x)|\int_{\Snn} \om_i 
      \inp{x}{\om}\, d\mathcal{H}^{n-1}(\om)\\
= -\text{p.v.}&\int_{(\Omega\setminus\Omega_{s})} |\nabla u(x')|\langle x', \nabla_{x'}\del_{x_i'}G_{s}(x,x')\rangle \, dx'   -\frac{1}{\upomega_n}|\gr u(x)|\int_{\Snn} \om_i 
      \inp{x}{\om}\, d\mathcal{H}^{n-1}(\om)\\
      &\ - \int_{(\Omega\setminus\Omega_{s})} |\nabla u(x')|\langle x', \nabla_{x'}(\del_{x_i}\psi_{s}(x,x')+\del_{x_i'}\psi_{s}(x,x'))\rangle \, dx'
\end{aligned}
    \end{equation*}
 where the latter equality holds due to  
 $\del_{x_i} \Gamma(x,x')=-\del_{x_i'} \Gamma(x,x')$. 
%    \begin{align*}
%        \nabla\phi_{s}(x)&= \text{p.v} \int_{(\Omega\setminus\Omega_{s})}|\nabla u(x')|\nabla_{x}\left(\langle x', \nabla_{x'}G_{s}(x,x')\rangle\right) \, dx'\\
%        &=\text{p.v.} \int_{(\Omega\setminus\Omega_{s})} |\nabla u(x')| D^{2}G_{s}(x,x')x' \, dx'.
 %   \end{align*}
 Now, let us recall \eqref{eq:point_4} with the choice of $\phi = \phi_{s}$, so that we have 
 \begin{equation}\label{eq:p5}
 \int_{\Omega\setminus\Omega_{s}}|\nabla u|^{p-2}\langle \nabla u, \nabla\phi_{s}\rangle dx = \int_{\partial\Omega_{s}} u|\nabla u|^{p-1} d\mathcal{H}^{n-1}. 
\end{equation}
Using the expression of $\del_{x_i}\phi_{s}$ from above on \eqref{eq:p5}, we obtain the following,
\begin{equation}\label{eq:p6}
 \begin{aligned}
-  &\int_{(\Omega\setminus\Omega_{s})} \text{p.v.}\int_{(\Omega\setminus\Omega_{s})}
 |\nabla u (x)|^{p-2}|\nabla u(x')| \langle D^{2}_{x'}G_{s}(x,x') x', \nabla u(x) \rangle \, dx' \dx-J\\
 &\  -\frac{1}{\upomega_n}\int_{\Omega\setminus\Omega_{s}}\int_{\Snn}
 |\gr u|^{p-1} \inp{\gr u}{\om}\inp{\om}{x}\, d\mathcal{H}^{n-1}(\om)\dx
  = s\int_{\partial\Omega_{s}} |\nabla u|^{p-1} \, d\mathcal{H}^{n-1},
\end{aligned}
\end{equation}
where $J$ is a double integral term containing $\psi_s$. 
Now, note that $D^{2}_{x'}G_{s}(x,\cdot)$ is in general a compactly supported distribution satisfying the following distributional identity,
$$ \frac{\del^2 G_s}{\del x'_i\del x'_j} \,=\, \text{p.v.} 
\left(\frac{\del^2 G_s}{\del x'_i\del x'_j}\right) -\frac{\delta_{i,j}}{n}\delta(x'-x),$$
which we use to rewrite \eqref{eq:p6} as the following, 
\begin{equation*}%\label{eq:p7}
\begin{aligned}
-\frac{1}{n}&\int_{\Omega\setminus\Omega_{s}}|\nabla u|^{p-1}\langle x,\nabla u\rangle \,dx  -\frac{1}{\upomega_n}\int_{\Omega\setminus\Omega_{s}}\int_{\Snn}
 |\gr u|^{p-1} \inp{\gr u}{\om}\inp{\om}{x}\, d\mathcal{H}^{n-1}(\om)\dx\\
 &= s\int_{\partial\Omega_{s}} |\nabla u|^{p-1} \, d\mathcal{H}^{n-1}+J+
 \int_{\Omega\setminus\Omega_{s}}
 |\gr u|^{p-2} \sum_{i,j} \del_{x_i}u \int_{\Omega\setminus\Omega_{s}}
 G_s(x,x')\del^2_{ x'_i, x'_j}\big(x'_j|\nabla u(x')|\big)\, dx' \dx\\
 &= s\int_{\partial\Omega_{s}} |\nabla u|^{p-1} \, d\mathcal{H}^{n-1}+J+
 \int_{\Omega\setminus\Omega_{s}}
 |\gr u|^{p-2}\inp{\gr u}{\int_{\Om\mns\Om_s} G_s(x,x')\gr_{x'}\dv(x'|\nabla u(x')|) \, dx'} \dx
\end{aligned}
\end{equation*}
Since $|G_s(x,x')|\leq C(1+\Phi(|x-x'|))$, the last term is bounded despite the singularity of $G_s$ and it is of order $O(s^2)$, which can be seen from the following decomposition using coarea formula,
\begin{align*}
 &\int_{\Omega\setminus\Omega_{s}}
 |\gr u|^{p-2}\inp{\gr u}{\int_{\Om\mns\Om_s} G_s(x,x')\gr_{x'}\dv(x'|\nabla u(x')|) \, dx'} \dx\\
 &=\int_0^s\int_{\{u=t\}}
  |\gr u|^{p-2}\inp{\frac{\gr u}{|\gr u|}}{\int_0^s\int_{\{u=t'\}} \frac{G_s(x,x')}{|\gr u(x')|}\gr_{x'}\dv(x'|\nabla u(x')|) \, d\mathcal{H}^{n-1}(x')} \, d\mathcal{H}^{n-1}(x)\, dt'\, dt.
\end{align*}
By a similar decomposition as above, it can be seen the double integral term $J$ is also of $O(s^2)$. 
Therefore, using this on the above, we can conclude 
\begin{equation}\label{eq:fineq}
\begin{aligned}
-\frac{1}{n}\int_{\Omega\setminus\Omega_{s}}&|\nabla u|^{p-1}\langle x,\nabla u\rangle \,dx  -\frac{1}{\upomega_n}\int_{\Omega\setminus\Omega_{s}}\int_{\Snn}
 |\gr u|^{p-1} \inp{\gr u}{\om}\inp{\om}{x}\, d\mathcal{H}^{n-1}(\om)\dx\\
 &= s\int_{\partial\Omega_{s}} |\nabla u|^{p-1} \, d\mathcal{H}^{n-1}+ O(s^2).
\end{aligned}
\end{equation}
Furthermore, since $\inp{x}{-\gr u}\geq 0$, using Lemma \ref{lem:apptech} to estimate the second term of \eqref{eq:fineq}, we can finally conclude
$$ -\int_{\Omega\setminus\Omega_{s}}|\nabla u|^{p-1}\langle x,\nabla u\rangle \,dx \sim s\int_{\partial\Omega_{s}} |\nabla u|^{p-1} \, d\mathcal{H}^{n-1} + O(s^2),$$
as desired. Using the above in \eqref{eq:point_3}, we get 
$$\int_{0}^{s}\T(\Omega_{t}) \, dt \sim s\int_{\partial\Omega_{s}} |\nabla u|^{p-1} \, d\mathcal{H}^{n-1} + O(s^2) + O(s^{p-1}).$$
Therefore, we have obained 
$$ \aveint{0}{s} \T(\Omega_{t}) \, dt \sim \int_{\partial\Omega_{s}} |\nabla u|^{p-1} \, d\mathcal{H}^{n-1} + O(s) + O(s^{p-2}). $$
Since $p>2$, letting $s\to 0^+$ on the above and using Lebesgue differentiation theorem, is enough to establish \eqref{eq:rtp}, thereby concluding the proof. 
\end{proof}
\begin{remark}
The proof of the above proposition is the only place where $p>2$ is required. For every other results in this paper $p>1$ is enough. 
\end{remark}
\subsection{Sub-solutions from supremal convolution}\label{subsec:sup}
Given $u_0$ satisfying \eqref{eq:k0dir} for the convex body $K_0=\{u_0\geq 0\}$ and $N$ being an open neighborhood of $\del K_0=\{u_0=0\}$, 
we can select $\eps_0$ small enough (depending on modulus of convexity of $K_0$), such that $\{u_0\geq t\}$ is convex for all $0\leq t\leq \eps_0$ and $\{u_0=\eps_0\}\sub N$. Therefore, we can take a reduction of $N$ so that $$\del N\cap K_0=\{u_0=\eps_0\},$$ which we assume henceforth without loss of generality. Note that $N$ is still a convex ring and for any $K\in \mathcal{N}_\tau(K_0) $, since $\del N\cap K=\del N\cap K_0$, 
\eqref{eq:kdir} becomes 
\begin{equation}\label{eq:kdir1}
 \begin{cases}
  \dv \big(|\gr u_K|^{p-2}\gr u_K\big)=0,\ \ &\text{in}\ \mathring K\cap N;\\
  u_K(x)= 0, \ \  &\forall\ x\in \del K\cap N;\\
  u_K(x)= \eps_0,\ \  &\forall\ x\in \del N\cap \mathring K.
 \end{cases}
\end{equation}
The maximum principle for the $p$-Laplacian for \eqref{eq:kdir1} leads to $\|u_K\|_{L^\infty(K\cap N)}=\eps_0$. 
The convexity of the sub-level sets of $u_K$'s for any 
$K\in \mathcal{N}_\tau(K_0) $, is established in the following. The proof is similar to that of \cite[Theorem 1(i)]{Lewis77} and is omitted. 
\begin{theorem} \label{thm:conv}
Let $u_K\in C^\infty(\mathring K\cap N)$ be the solution of \eqref{eq:kdir1} for any 
$K\in \mathcal{N}_\tau(K_0) $. Then, the set $\set{x\in K}{u_K(x)>t}$ is convex for all $0\leq t\leq \eps_0$. 
\end{theorem}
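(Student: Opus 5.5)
The plan follows Lewis's convex-ring argument in \cite[Theorem 1(i)]{Lewis77}, combined with a continuity argument in the parameter $t$ that connects $K_0$ to $K\in\mathcal{N}_\tau(K_0)$. Write $K^t=(K_0+tK')/(1+t)$ and $u_t=u_{K^t}$ for the solution of \eqref{eq:kdir1}. Two preliminary facts are needed.

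\begin{enumerate}
\item \emph{The base case.} By the choice of $\eps_0$ made just above, the sets $\{u_0\geq s\}$ are convex for every $0\leq s\leq\eps_0$.
\item \emph{Stability in $t$.} The ring $\mathring K^t\cap N$ is bounded by the convex set $\del K^t$ and by the fixed level set $\{u_0=\eps_0\}$. Its outer boundary varies continuously in the Hausdorff distance, by \eqref{eq:hdh} and \eqref{eq:suppminkadd}. The solutions $u_t$ then depend continuously on $t$ in $C^{1,\alpha}$ up to the boundary. For the outer part this uses Lieberman-type boundary regularity, after approximating by $C^2_+$ sets via \eqref{eq:c2ap}. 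For the inner part it uses the fact that $\{u_0=\eps_0\}$ is a smooth level set, since $\gr u_0\neq0$ there.
\end{enumerate}

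Consequently, a Hopf-type lower bound $|\gr u_t|\geq c>0$ on $\overline{\mathring K^t\cap N}$ holds uniformly for small $|t|$. By Schauder theory, $u_t$ is smooth in the interior, so each level set $\{u_t=s\}$ is a smooth hypersurface.

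The core step is a closedness/openness argument on the set $I$ of those $t\in(-\tau,\tau)$ for which every superlevel set $\{u_t>s\}$ is convex. The set $I$ contains $0$. It is closed because convexity passes to locally uniform limits, and we have continuity in $t$. To show $I$ is open, or more robustly to run Lewis's argument directly for each fixed $t$, use the approach of \cite{Lewis77}. Consider the quasi-concave envelope $u^*$ of $u_t$: its superlevel sets are the convex hulls of those of $u_t$. The claim is that $u^*$ is a $p$-subsolution of the $p$-Laplacian in the ring with the same boundary data. Granting this, Theorem \ref{thm:plap}(1) gives $u^*\leq u_t$, and since $u_t\leq u^*$ trivially, we get $u^*=u_t$, which is the desired convexity. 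Proving the subsolution property is the standard Carathéodory step. Near a point where $u^*$ is not equal to $u_t$, $u^*$ is a convex combination of values at finitely many points lying on parallel supporting hyperplanes. The $p$-Laplacian's invariance under translations and its homogeneity then yield a viscosity subsolution inequality, once one uses the structure recorded in Proposition \ref{prop:plaphu}: $\lap_p$ expressed through $h_u$ is monotone with respect to Minkowski combination of the level sets.

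The boundary data have to be checked separately. Near $\del K^t$, the outer boundary $\{u^*=0\}$ coincides with $\del K^t$ because $K^t$ is convex. Near $\{u=\eps_0\}$, we need the inner boundary to be convex; this holds because $\{u_0\geq\eps_0\}$ is convex by the choice of $\eps_0$. We also need $u^*=\eps_0$ there, which follows from the maximum principle bound $u_t\leq\eps_0$.

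The main obstacle is the subsolution step at the inner boundary. Lewis's theorem is formulated for rings where $u=1$ on the inner set and $0$ outside, with the inner set convex, and this is exactly our situation after normalizing by $\eps_0$. The remaining work is to verify that the envelope attains no spurious maximum on $\del N$. I would handle this by extending $u_t$ by $\eps_0$ on $\{u_0\geq\eps_0\}$. The extended function is then $p$-superharmonic and continuous across the inner boundary, so Lewis's argument applies verbatim in the full convex set $K^t$.
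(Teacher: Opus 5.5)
Your proposal takes essentially the paper's route, since the paper simply defers to \cite[Theorem 1(i)]{Lewis77}. As your last paragraph observes, $\mathring K\cap N=\mathring K\setminus\{u_0\geq\eps_0\}$ is a convex ring (the inner set is convex by the choice of $\eps_0$ and lies in $\mathring K$ for small $\tau$), and $u_K/\eps_0$ is exactly its $p$-capacitary function, so Lewis's theorem applies to each fixed $K$ and the continuity-in-$t$ scaffolding (uniform Hopf bounds, the open/closed set $I$) is unnecessary. One caution: do not justify the envelope's subsolution property through Proposition \ref{prop:plaphu}, because that formula presupposes convex $C^2_+$ level sets, while at the Carath\'eodory touching points you only know weak convexity (possibly zero curvature); take that step from Lewis's argument itself.
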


Now we are in a position to use Proposition \ref{prop:plaphu}. Given  $u_{K_i}$ as the solutions of \eqref{eq:kdir1} for $K=K_i$ for $i\in\{1,2\}$ and any $\lambda\in [0,1]$, the supremal convolution of $u_{K_1}$ 
and $u_{K_2}$ is given by
\begin{equation}\label{eq:supcon}
u_{\lambda}^*(x) := \sup_{\substack{y,z\in\R^n\\ x = (1- \lambda) y + \lambda z}}\big[\min\{u_{K_1}(y),u_{K_2}(z)\}\big].
\end{equation}
This is defined so that for any $0\leq t\leq \eps_0$ and $\lambda\in [0,1]$, we have 
\begin{equation}\label{eq:conprop}
\{u_{\lambda}^*\geq t\}=(1- \lambda)\{u_{K_1}\geq t\}+\lambda \{u_{K_2}\geq t\}. 
\end{equation}
Using the $p$-Laplacian expression in Proposition \ref{prop:plaphu}, we shall show that $u_{\lambda}^*$ is a sub-solution. We need 
the following technical lemma, we refer to \cite[p. 472-473]{CS}) for the proof. 
\begin{lemma}\label{lem:cstech}
 Given any symmetric and positive definite matrices $M_{1},M_{2}\in\R^n\otimes \R^n$, vectors $z_{1},z_{2}\in\R^n$ and $t_1,t_2\in \R$, let 
 $M_{\lambda} = (1-\lambda)M_{1} + \lambda M_{2}$, $z_{\lambda} = (1-\lambda)z_{1} + \lambda z_{2}$ and $t_{\lambda} = (1-\lambda)t_{1} + \lambda t_{2}$ for any $\lambda\in [0,1]$. Then we have the following:
    \begin{align}\label{mtz1}
     \text{(i)}\quad   \langle M_{\lambda}^{-1}z_{\lambda}, z_{\lambda}\rangle &\leq \convexcomb{\inp{M^{-1}_{1}z_{1}}{z_{1}}}{\inp{M^{-1}_{2}z_{2}}{z_{2}}};\\
    \label{mtz2}   \text{(ii)}\quad   t_{\lambda}^{2}\,\mathrm{Tr}(M^{-1}_{\lambda}) &\leq \convexcomb{t_{1}^{2}\,\mathrm{Tr}(M_{1}^{-1})}{t_{2}^{2}\,\mathrm{Tr}(M_{2}^{-1})}.
    \end{align}
%Moreover, equality holds iff ${\color{red}?} $ for \eqref{mtz1} and $t_1M_1=t_2M_2$ for \eqref{mtz2}. 
\end{lemma}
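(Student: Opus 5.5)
Both inequalities are joint-convexity statements. For fixed symmetric positive definite $M$, the map $(M,z)\mapsto \inp{M^{-1}z}{z}$ is convex, and so is $(M,t)\mapsto t^2\,\mathrm{Tr}(M^{-1})$. Applying convexity to the pairs $(M_1,z_1)$ and $(M_2,z_2)$, respectively $(M_1,t_1)$ and $(M_2,t_2)$, along the segment $\lambda\in[0,1]$ gives exactly (i) and (ii).

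\textbf{Part (i).} I will use the variational characterization
$$\inp{M^{-1}z}{z}=\sup_{w\in\R^n}\big(2\inp{z}{w}-\inp{Mw}{w}\big),$$
valid for $M$ symmetric positive definite, with the supremum attained at $w=M^{-1}z$. For each fixed $w$ the expression inside is affine in the pair $(M,z)$. A supremum of affine functions is convex, so the map is jointly convex. Concretely, with $w=M_\lambda^{-1}z_\lambda$,
$$\inp{M_\lambda^{-1}z_\lambda}{z_\lambda}=(1-\lambda)\big(2\inp{z_1}{w}-\inp{M_1w}{w}\big)+\lambda\big(2\inp{z_2}{w}-\inp{M_2w}{w}\big).$$
Each bracket is bounded above by the corresponding $\inp{M_i^{-1}z_i}{z_i}$, which proves (i).

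\textbf{Part (ii).} Write $\mathrm{Tr}(M^{-1})=\sum_k\inp{M^{-1}e_k}{e_k}$. Then
$$t^2\,\mathrm{Tr}(M^{-1})=\sum_k\inp{M^{-1}(te_k)}{te_k}.$$
Apply (i) to each $k$ with $z_i=t_ie_k$. Since $z_\lambda=t_\lambda e_k$ is the same convex combination, this gives
$$\inp{M_\lambda^{-1}t_\lambda e_k}{t_\lambda e_k}\le(1-\lambda)\,t_1^2\inp{M_1^{-1}e_k}{e_k}+\lambda\, t_2^2\inp{M_2^{-1}e_k}{e_k}.$$
Summing over $k$ yields (ii).

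\textbf{Expected obstacle.} The only delicate point is justifying the variational formula, which requires $M$ to be positive definite. This holds because $M_\lambda$ is a convex combination of positive definite matrices and is therefore itself positive definite. The formula then follows by completing the square:
$$2\inp{z}{w}-\inp{Mw}{w}=\inp{M^{-1}z}{z}-\inp{M(w-M^{-1}z)}{w-M^{-1}z}.$$
Beyond that, the argument is routine.
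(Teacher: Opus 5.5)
Your proof is correct and complete. The completing-the-square identity gives $\inp{M^{-1}z}{z}=\sup_{w}\big(2\inp{z}{w}-\inp{Mw}{w}\big)$ for symmetric positive definite $M$. Evaluating at $w=M_\lambda^{-1}z_\lambda$ and splitting the expression, which is affine in $(M,z)$, gives (i). Applying (i) with $z_i=t_ie_k$ and summing over $k$ gives (ii). The only slip is the opening phrase ``for fixed $M$''; what you prove and use is joint convexity in $(M,z)$.

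The paper gives no argument of its own for this lemma; it simply refers to Colesanti--Salani \cite[pp.~472--473]{CS}, so there is no in-text proof to compare against. Your route is self-contained, and it gets (ii) directly from (i) by writing $t^2\,\mathrm{Tr}(M^{-1})=\sum_k\inp{M^{-1}(te_k)}{te_k}$. The other standard way to prove (ii) combines the concavity of $M\mapsto 1/\mathrm{Tr}(M^{-1})$ with the convexity of $(t,s)\mapsto t^2/s$ on $s>0$. That route needs a separate matrix-concavity fact, which your argument avoids.
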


Using the above Lemma \ref{lem:cstech} and  Proposition \ref{prop:plaphu}, we have the following. 

\begin{proposition}\label{prop:subsoln}
For any $K_1, K_2\in \mathcal{N}_\tau(K_0)\cap C^2_+ $ and $\lambda\in[0,1]$, let 
$K_\lambda= (1-\lambda)K_1+\lambda K_2$ and 
$u_{\lambda}^{*}$ be as in \eqref{eq:supcon}. Then we have $ -\lap_{p}u_{\lambda}^{*} \leq 0$ in $K_{\lambda}\cap N$. 
\end{proposition}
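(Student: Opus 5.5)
The plan is to follow the Colesanti--Salani strategy \cite{CS}: rewrite the $p$-Laplacian of $u^*_\lambda$ through the support functions of its superlevel sets, then use linearity of support functions under Minkowski addition together with the concavity-type inequalities of Lemma \ref{lem:cstech}.

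First, by \eqref{eq:conprop} and \eqref{eq:suppminkadd}, the support function of the superlevel sets of $u^*_\lambda$ is the convex combination
\[
h_{u^*_\lambda}(\xi,t)=(1-\lambda)h_{u_{K_1}}(\xi,t)+\lambda h_{u_{K_2}}(\xi,t),\qquad \xi\in\Snn,\ 0\le t\le\eps_0.
\]
By Theorem \ref{thm:conv} the superlevel sets of $u_{K_i}$ are convex. Since $K_i$ are $C^2_+$ and $\gr u_{K_i}\ne0$ in $K_i\cap N$, I would argue, after a smooth approximation if necessary, that these sets are $C^2_+$, so that $h_{u_{K_i}}\in C^2$. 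Then $h^*:=h_{u^*_\lambda}$ is $C^2$ in $(\xi,t)$ and has $\del_t h^*<0$. The Gauss map and inverse construction \eqref{eq:bij} then shows that $u^*_\lambda$ is $C^2$ with nonvanishing gradient in $K_\lambda\cap N$. Consequently Proposition \ref{prop:plaphu} applies to $u^*_\lambda$ pointwise. In particular, $\xi$ and $t$ are fixed independently of $\lambda$, which is exactly what linearity in $h$ requires.

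Fix $x\in K_\lambda\cap N$ and put $(\xi,t)=(-\gr u^*_\lambda/|\gr u^*_\lambda|,u^*_\lambda)$. At this $(\xi,t)$ define, for $i=1,2$,
\[
M_i=\covtwo h_{u_{K_i}}+h_{u_{K_i}}\I,\qquad z_i=\cov\del_t h_{u_{K_i}},\qquad a_i=-\del_t h_{u_{K_i}}>0,\qquad b_i=-\del_t^2h_{u_{K_i}}.
\]
All these quantities are linear in $h$, so $M_\lambda,z_\lambda,a_\lambda,b_\lambda$ are the corresponding convex combinations. Each $M_i$ is positive definite by \eqref{eq:grhcov}. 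Since each $u_{K_i}$ is $p$-harmonic, Proposition \ref{prop:plaphu} gives
\[
a_i^2\Tr(M_i^{-1})+(p-1)\big(\inp{M_i^{-1}z_i}{z_i}+b_i\big)=0,\qquad i=1,2.
\]
It then suffices to show that $a_\lambda^2\Tr(M_\lambda^{-1})+(p-1)(\inp{M_\lambda^{-1}z_\lambda}{z_\lambda}+b_\lambda)\le0$. Indeed, by Proposition \ref{prop:plaphu},
\[
-\lap_p u^*_\lambda=a_\lambda^{-(p+1)}\big[a_\lambda^2\Tr(M_\lambda^{-1})+(p-1)(\inp{M_\lambda^{-1}z_\lambda}{z_\lambda}+b_\lambda)\big],
\]
after multiplying through by $a_\lambda^2>0$. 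This is where Lemma \ref{lem:cstech} enters. Part (ii), with $t_i=a_i$, gives $a_\lambda^2\Tr(M_\lambda^{-1})\le(1-\lambda)a_1^2\Tr(M_1^{-1})+\lambda a_2^2\Tr(M_2^{-1})$. Part (i) gives $\inp{M_\lambda^{-1}z_\lambda}{z_\lambda}\le(1-\lambda)\inp{M_1^{-1}z_1}{z_1}+\lambda\inp{M_2^{-1}z_2}{z_2}$. The $b$-term is exactly linear. Summing these with weight $p-1>0$ shows the bracket is at most $(1-\lambda)\cdot0+\lambda\cdot0=0$. Hence $-\lap_pu^*_\lambda\le0$ pointwise, and therefore also weakly.

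The main obstacle is the regularity step: justifying that $u^*_\lambda$ is $C^2$ near each point with $\gr u^*_\lambda\ne0$, so that Proposition \ref{prop:plaphu} can legitimately be applied to it. My first attempt would be to run the construction backwards. Define $H=h^*$, which is $C^2$, strictly decreasing in $t$, and satisfies $\covtwo H+H\I>0$. Then define the map $(\xi,t)\mapsto\gr H(\xi,t)$. The implicit function theorem shows this is a $C^1$ diffeomorphism onto $K_\lambda\cap N$; its level structure recovers $u^*_\lambda$, and the identities of Lemma \ref{lem:ids1} upgrade the regularity to $C^2$. A second, milder point is the boundary behaviour near $\del N$ and $\del K_\lambda$: since $\{u^*_\lambda\ge\eps_0\}$ and $\{u^*_\lambda\ge0\}=K_\lambda$ come from \eqref{eq:conprop}, this only affects the later comparison argument and not the interior subsolution property.
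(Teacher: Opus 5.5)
Your proposal is correct and follows the paper's proof essentially step for step. The shared steps are: the linearity $h^*_\lambda=(1-\lambda)h_1+\lambda h_2$ evaluated at a common $(\xi,t)$ (the paper's \eqref{eq:invlin}); the two $p$-harmonic identities from Proposition \ref{prop:plaphu}; and Lemma \ref{lem:cstech}(i)--(ii), with the $\del_t^2 h$ term entering linearly. Your bookkeeping is in fact slightly cleaner: you use the tangential $\cov\del_t h_i$ and $M_i^{-1}$ consistently with Proposition \ref{prop:plaphu}, you carry the factor $a_\lambda^{-(p+1)}$ explicitly, and you make explicit the $C^2$ regularity of $u^*_\lambda$ (via $\gr u^*_\lambda=\xi/\del_t h^*_\lambda$), which the paper uses only implicitly.
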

\begin{proof}
 For $i\in \{1,2\}$, let $h_i(\xi, t)=h_{u_i}(\xi, t)$ as in \eqref{eq:hu} with $u_i=u_{K_i}$ and let us denote 
 $$M_i=\covtwo h_{i}(\xi, t) + h_{i}(\xi, t)\I, \quad t_i= \del_t h_i(\xi, t), \quad z_i=\gr \del_t h_i(\xi, t).$$ 
 Let $h^*_\lambda (\cdot, t) 
 =h_{\{u^*_\lambda\geq t\}}$ with $u^*_\lambda $ as in \eqref{eq:supcon} so that from \eqref{eq:conprop} and \eqref{eq:suppminkadd}, we have 
 $$h^*_\lambda=(1-\lambda)h_1+\lambda h_2.$$ Hence, the convex combinations as in Lemma \ref{lem:cstech} are given by 
$$M_\lambda =\covtwo h^*_\lambda(\xi, t) + h^*_\lambda(\xi, t)\I, 
\quad t_\lambda= \del_t h^*_\lambda(\xi, t), \quad z_\lambda=\gr \del_t h^*_\lambda(\xi, t),$$ 
for any $\lambda\in [0,1]$. Also, recalling \eqref{eq:bij},  
let $\Theta_i=\Theta_{u_i}$ and 
$\Theta^*_\lambda=\Theta_{u^*_\lambda}$. Then from \eqref{eq:bij} and \eqref{eq:hu}, 
\begin{equation}\label{eq:invlin}
\inv{\Theta^*_\lambda}=\gr h^*_\lambda
=(1-\lambda)\gr h_1+\lambda \gr h_2=(1-\lambda)\inv{\Theta_1}+\lambda \inv{\Theta_2}.
\end{equation}
Note that $K_\lambda\cap N=\{u^*_\lambda\geq 0\}$ from \eqref{eq:conprop}. 
Hence, for any $x\in K_\lambda\cap N$, if $(\xi, t)=\Theta^*_\lambda(x)$, then 
from \eqref{eq:invlin}, we have $x=(1-\lambda)\inv{\Theta_1}(\xi, t)+\lambda \inv{\Theta_2}(\xi, t)$ which implies 
$$\Theta^*_\lambda(x)=(\xi, t)=\Theta_1(x_1)=\Theta_2(x_2),\quad \text{for some}\quad x_1\in K_1\cap N,\ x_2\in K_2\cap N.
$$ 
%for some 
%$x_1\in K_1\cap N, x_2\in K_2\cap N$. 
Thus, we can use Proposition \ref{prop:plaphu} uniformly at 
any $(\xi, t)\in \Snn\times [0,\eps_0]$. 
We have, $\lap_{p}u_{1} = 0$ in $K_1\cap N$ and $\lap_{p}u_{2}=0$ in $K_2\cap N$ from \eqref{eq:kdir1}. Hence, using Proposition \ref{prop:plaphu}, 
at any $(\xi, t)\in \Theta^*_\lambda (K_\lambda\cap N)$, we have 
\begin{equation}\label{plap12}
\begin{aligned}
    t_1^2\,\mathrm{Tr}(M_{1}^{-1}) + (p-1)[\langle M_{1}z_{1}, z_{1}\rangle - \partial^{2}_{t}h_{1}] &= 0, \quad\text{in}\ K_1\cap N,\\
    t_2^2\,\mathrm{Tr}(M_{2}^{-1}) + (p-1)[\langle M_{2}z_{2}, z_{2}\rangle - \partial^{2}_{t}h_{2}] &= 0, 
    \quad\text{in}\ K_2\cap N. 
    \end{aligned}
\end{equation}
Using Proposition \ref{prop:plaphu}, Lemma \ref{lem:cstech} and \eqref{plap12} respectively, we obtain 
    \begin{align*}
      [(-\del_{t}h^*_{\lambda})^{p-1}] (-\Delta_{p}u^{*}_{\lambda}) &=t_\lambda^2\,\mathrm{Tr}(M^{-1}_{\lambda}) + (p-1)\left[\inp{M^{-1}_{\lambda}z_{\lambda}}{z_{\lambda}} - \partial_{t}^{2}h^*_{\lambda}\right]\\
        &\leq\convexcomb{t_1^2\,\mathrm{Tr}(M_{1}^{-1})}{t_2^2\,\mathrm{Tr}(M_{2}^{-1})} \\
        &\quad+(p-1)\left[\convexcomb{\inp{M_{1}^{-1}z_{1}}{z_{1}}}{\inp{M_{2}^{-1}z_{2}}{z_{2}}}\right]\\
        &\quad-(p-1)\left[\convexcomb{\partial^{2}h_{1}}{\partial_{t}^{2}h_{2}}\right]\\
        &= (1-\lambda)\bigbrackets{t_1^2\,\mathrm{Tr}(M_{1}^{-1}) + (p-1)\left[\inp{M_{1}^{-1}z_{1}}{z_{1}} -\partial_{t}^{2}h_{1}\right]}\\
        &\quad+ \lambda\bigbrackets{t_2^2\,\mathrm{Tr}(M_{2}^{-1}) + (p-1)\left[\inp{M_{2}^{-1}z_{2}}{z_{2}}-\partial_{t}^{2}h_{2}\right]}=0.
    \end{align*}    
Recalling Lemma \ref{lem:ids1}, 
$-\del_{t}h^*_{\lambda}\geq 0$ and the proof is finished. 
\end{proof}
Given any function $u$ defined on a convex ring, let us denote the following, 
\begin{equation}\label{eq:assfunc}
\tilde u(x) := \sup_{\substack{y,z\in\R^n\\ x \in [y,z]}}\big[\min\{u(y),u(z)\}\big].
\end{equation}
as in \cite{Lewis77}. Then, $u\leq \tilde u$ follows directly. Moreover, it is not hard to show that the sub-level sets $\{u\geq t\}$ are convex for 
$0\leq t\leq \|u\|_{L^\infty}$ iff $\tilde u\leq u$ and hence, $\tilde u=u$. This leads to the following. 
\begin{corollary}\label{cor:subsolncomp}
For any $K_1, K_2\in \mathcal{N}_\tau(K_0)\cap C^2_+ $ and $\lambda\in[0,1]$, let 
$K_\lambda= (1-\lambda)K_1+\lambda K_2$ and 
$u_{\lambda}^{*}$ be as in \eqref{eq:supcon}. Then we have 
$ u_{\lambda}^{*} \leq u_{K_\lambda}$ in $K_{\lambda}\cap N$. 
\end{corollary}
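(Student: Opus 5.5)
The plan is to apply the comparison principle, Theorem~\ref{thm:plap}(1), on the convex ring $K_\lambda\cap N$, with $u_\lambda^*$ as subsolution and $u_{K_\lambda}$ as the solution of \eqref{eq:kdir1} for $K=K_\lambda$. By Proposition~\ref{prop:subsoln} we already have $-\lap_p u^*_\lambda\le 0=-\lap_p u_{K_\lambda}$ in $K_\lambda\cap N$, at least pointwise where $u^*_\lambda$ is $C^2$. So two things remain: upgrading this to a weak inequality, and comparing the boundary values on the two components of $\del(K_\lambda\cap N)$.

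\textbf{Boundary comparison.} On the outer boundary, \eqref{eq:conprop} with $t=0$ gives $\{u^*_\lambda\ge 0\}=(1-\lambda)K_1+\lambda K_2=K_\lambda$. Since $u_{K_i}=0$ on $\del K_i$, for $x\in\del K_\lambda$ every decomposition $x=(1-\lambda)y+\lambda z$ with $y\in K_1$, $z\in K_2$ must have $y\in\del K_1$ and $z\in\del K_2$; otherwise $x$ would be interior. Hence $u^*_\lambda=0=u_{K_\lambda}$ on $\del K_\lambda$.

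On the inner boundary $\del N\cap K_\lambda=\del N\cap K_0$ we have $u_{K_\lambda}=\eps_0$. The maximum principle gives $u_{K_i}\le \eps_0$, so $u^*_\lambda\le\eps_0$ everywhere, and therefore $u^*_\lambda\le u_{K_\lambda}$ there as well.

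Some care is needed because $u_{K_i}$ is only defined on $K_i\cap N$. I would extend it by $\eps_0$ on $K_i\setminus N$, consistent with Theorem~\ref{thm:conv}, so that the supremum in \eqref{eq:supcon} is well defined. Then \eqref{eq:conprop} shows that $\{u^*_\lambda\ge\eps_0\}$ is the Minkowski combination of the inner convex sets. This combination need not coincide with $K_\lambda\setminus N$, but the inequality $u^*_\lambda\le\eps_0$ is all the comparison needs.

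\textbf{Regularity and the weak inequality.} This is where I expect the main obstacle. Proposition~\ref{prop:subsoln} is a pointwise computation valid where $u^*_\lambda$ is $C^2$ with nonvanishing gradient. That holds because of the parametrisation \eqref{eq:invlin}: $\inv{\Theta^*_\lambda}=(1-\lambda)\inv{\Theta_1}+\lambda\inv{\Theta_2}$ is a $C^1$ map whose Jacobian is invertible, since the matrices $M_i$ are positive definite and $\del_t h_i<0$ by Lemma~\ref{lem:ids1}(3). By the inverse function theorem $\Theta^*_\lambda$ is then $C^1$, so $u^*_\lambda$, being its last coordinate, is $C^1$ with $\gr u^*_\lambda\neq0$. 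Higher regularity follows from the regularity of the $h_i$ in $t$, which in turn comes from the smoothness of $u_i$ and Lemma~\ref{lem:ids1}.

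With $u^*_\lambda\in C^2$ and $\gr u^*_\lambda\neq 0$, the classical inequality $\lap_p u^*_\lambda\ge0$ integrates by parts against nonnegative test functions to give the weak inequality. Theorem~\ref{thm:plap}(1) then yields $u^*_\lambda\le u_{K_\lambda}$ in $K_\lambda\cap N$.

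If the $C^2$ regularity up to the boundary turns out to be delicate, I would fall back on two approximations. First, apply the comparison on $\{u^*_\lambda>\delta\}\cap N$, where both functions are smooth, against the shifted function $u_{K_\lambda}+\delta$, and let $\delta\to0$. Second, if needed, use the $C^2_+$ approximation \eqref{eq:c2ap}.
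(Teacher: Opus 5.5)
Your proposal is correct and follows the paper's route. Both arguments combine Proposition~\ref{prop:subsoln} with the comparison principle of Theorem~\ref{thm:plap}, using $u^*_\lambda=0=u_{K_\lambda}$ on $\del K_\lambda$ and a boundary comparison on $\del N\cap K_\lambda$. The differences are minor: on the inner boundary you use $u^*_\lambda\le\eps_0=u_{K_\lambda}$ where the paper claims equality via $\tilde u_0$ (equality does hold, since both inner sets equal the same convex set $K_0\setminus N$, so your caveat there is unnecessary), and you supply the $C^2$ regularity and weak-subsolution justification that the paper leaves implicit.
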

\begin{proof}
From \eqref{eq:kdir} and Proposition \ref{prop:subsoln}, 
$-\lap_p  u_{\lambda}^{*} \leq 0=-\lap_p u_{K_\lambda}$ in 
$\mathring K_\lambda$. From the definition \eqref{eq:supcon} and 
boundary conditions of \eqref{eq:kdir}, 
since 
$$\min_{y\in \del K_1, z\in \del K_2}\{u_{K_1}(y),u_{K_2}(z)\}=0,$$ we have 
$u_{\lambda}^{*}= 0= u_{K_\lambda}$ in $\del K_\lambda$. Also since $\del N\cap K=\del N\cap K_0$ for any $K\in \mathcal{N}_\tau(K_0) $, hence 
 from \eqref{eq:kdir},\eqref{eq:supcon}, and \eqref{eq:assfunc}, note that
$u_{\lambda}^{*}=\tilde u_0=u_{K_\lambda}$ in $\del N\cap K_\lambda$. Hence, $ u_{\lambda}^{*} \leq u_{K_\lambda}$ in $K_{\lambda}\cap N$ follows from comparison principle (Theorem \ref{thm:plap}) and the proof is finished. 
\end{proof}
Thus, from Corllary \ref{cor:subsolncomp}, for any $K_1, K_2\in \mathcal{N}_\tau(K_0)\cap C^2_+$, we have 
\begin{equation}\label{eq:ukineq}
u_{K_\lambda} \geq \min \{u_{K_1},u_{K_2}\},\quad \text{for any}\quad \lambda\in [0,1].
\end{equation}

\subsection{Homogeneity and Hadamard's formula}
We fix any convex $K\in \mathcal{N}_\tau(K_0)$ and recall some notions from subsection \ref{subsec:minkprob} of Section \ref{sec:prelim}, fitted to this setting. It includes results previously shown in \cite{Ak-Muk} provided in Section \ref{sec:prelim} and their consequences in our present set up. 

Letting $\mathcal{N}^{\tau}(K )= \{K  + tK'\, : \, 0\leq |t|<\tau\}$,
$K^t=K +tK'$ for the convex $K'$ as in \eqref{eq:nbdK0} and 
$\ukk(\cdot, t)\in W^{1,p}(\mathring K^t\cap N)$ as the weak solution of the Dirichlet problem
\begin{equation}\label{eq:omtdir1}
 \begin{cases}
  \dv \big(|\gr \ukk(\cdot, t)|^{p-2}\gr \ukk(\cdot, t)\big)=0,\ \ &\text{in}\ \mathring K^t\cap N;\\
  \ukk(x, t)= 0, \ \  &\forall\ x\in \del K^t\cap N;\\
  \ukk(x, t)= u_0\big(\frac{x}{1+t}\big),\ \  &\forall\ x\in \del N\cap \mathring K^t;
 \end{cases}
\end{equation}
for any $t\in (-\tau,\tau)$. We may also assume $K ,K'\in C^2_+$ without loss of generality, whenever needed. 
Recall the functional $\Gamma: \mathcal{N}^{\tau}(K ) \to \R$ as in
\eqref{eq:Gfunc}, in this case given by 
\begin{equation*}%\label{eq:Gfunc1}
\Gamma (K^t)= \int_{\Snn} h_{K^t}( \xi)\tilde\mu_{K^t}( \xi), 
\ \text{where}\ \ 
d\tilde\mu_{K^t}( \xi)=|\gr \ukk(\gr h_{K^t}(\xi),t)|^{p-1}\det(\covtwo h_{K^t} + h_{K^t} \I)
\, d  \xi,
\end{equation*}
for which, as noted in subsection \ref{subsec:minkprob} of Section \ref{sec:prelim} (Lemma \ref{lem:gomt}), it is shown previously that 
\begin{equation}\label{eq:Gprops1}
(i) \ \Gamma( \lambda K ) = \lambda^{n-p+1} \Gamma (K ), \quad 
(ii)\ \frac{d}{dt}\Big|_{t=0} \Gamma (K +t K')= (n-p+1) \int_{\Snn} h_{K'} \,d\tilde\mu_{K },
\end{equation}
for $|t|<\tau$ and $\lambda\in (1-\tau, 1+\tau)$. Comparing \eqref{eq:omtdir1} at $t=0$ with \eqref{eq:k0dir}, note that from uniqueness $\ukk(x,0)=u_0(x)$, and from \eqref{eq:mukint} and \eqref{eq:defT}, we have 
\begin{equation}\label{eq:gk0}
\tilde\mu_{K }=\mu_{K },\quad\text{and}\quad \Gamma(K )=\T(K ).
\end{equation}
It is not so in general for $t\neq 0$ and 
we require a different local variation of convex sets satisfying \eqref{eq:kdir} which yields the density of the measure \eqref{eq:mukint} corresponding to them. Denoting $\mathcal N_\tau(K )$ similarly as \eqref{eq:nbdK0}, for small enough $\tau>0$, let us denote 
\begin{equation}\label{eq:Kt}
K_t=\frac{K + tK'}{(1+t)} \in \mathcal N_\tau(K ) 
\quad\text{for}\quad t\in (-\tau, \tau).
\end{equation}
We establish a relation between $\T(K_t)$ and $\Gamma(K^t)$ 
which, from \eqref{eq:Gprops1}, we show that $\T(K_t)$ have the same local homogeneity and satisfy a similar Hadamard-type variational formula. 
\begin{lemma}\label{lem:tg}
Let $\ukk(\cdot, t)\in C^1(K^t\cap N)$ be the solution of 
the Dirichlet problem \eqref{eq:omtdir1} for all $K^t=K +tK'\in \mathcal N^\tau(K )$ and $u_{K_t}\in C^1(K_t\cap N)$ be the solution of the 
Dirichlet problem \eqref{eq:kdir} for any $K_t\in \mathcal N_\tau(K )$ as 
in \eqref{eq:Kt}. Then, for any $y\in K_t\cap N$, we have 
\begin{equation}\label{eq:ktkk}
u_{K_t}(y)=\ukk((1+t)y, t), \qquad\qquad \forall\ |t|<\tau. 
\end{equation}
\end{lemma}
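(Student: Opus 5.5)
We prove \eqref{eq:ktkk} through the uniqueness of weak solutions to the Dirichlet problem \eqref{eq:kdir}, which follows from the comparison principle in Theorem \ref{thm:plap}. Fix $|t|<\tau$ and set
$$v(y):=\ukk((1+t)y,t), \qquad y\in K_t\cap N.$$
The goal is to show that $v$ solves the same problem \eqref{eq:kdir} as $u_{K_t}$; the identity then follows. The basic observation is that the dilation $y\mapsto (1+t)y$ maps $K_t=(K+tK')/(1+t)$ bijectively onto $K^t=K+tK'$. It sends interiors to interiors and $\del K_t$ onto $\del K^t$.

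\textbf{Step 1 (equation).} The $p$-Laplacian commutes with dilations up to a constant factor. For $v(y)=w(ay)$ with $a=1+t>0$,
$$\gr v(y)=a\,\gr w(ay), \qquad \lap_p v(y)=a^{p}\,(\lap_p w)(ay).$$
This holds in the weak sense as well, by changing variables in the weak formulation with the test function $\phi(\cdot/a)$. Hence $v$ is $p$-harmonic wherever $(1+t)y\in \mathring K^t\cap N$.

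\textbf{Step 2 (boundary values).} On $\del K_t$ we have $(1+t)y\in\del K^t$, so $v(y)=0$ by the second condition of \eqref{eq:omtdir1}. On the inner boundary, wherever $(1+t)y\in\del N\cap\mathring K^t$, the third condition of \eqref{eq:omtdir1} gives
$$v(y)=u_0\big((1+t)y/(1+t)\big)=u_0(y),$$
which matches the third condition of \eqref{eq:kdir}.

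\textbf{Step 3 (matching the domains).} This is the main obstacle, because $N$ is a fixed set and is not dilation invariant. The domain of $v$ is $(1+t)^{-1}(K^t\cap N)=K_t\cap (1+t)^{-1}N$, whereas \eqref{eq:kdir} is posed on $K_t\cap N$. I would handle this in two steps.

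First, \eqref{eq:omtdir1} really only uses $N$ as a collar of the outer boundary. So I would read \eqref{eq:omtdir1} with the neighbourhood $(1+t)N$ in place of $N$. Since $\tau$ is small, $(1+t)N$ is still a neighbourhood of $\del K^t$, and the inner-boundary datum $u_0(x/(1+t))$ is exactly the datum that is natural on $\del((1+t)N)$.

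Second, under the standing reduction $\del N\cap K_0=\{u_0=\eps_0\}$, the datum is the constant $\eps_0$. It is then consistent on either choice of inner boundary, and $p$-harmonicity is preserved under the dilation.

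With the domains identified, both $v$ and $u_{K_t}$ are weak solutions of the same Dirichlet problem on $K_t\cap N$, with zero data on $\del K_t$ and data $u_0$ on $\del N\cap \mathring K_t$. Applying the comparison principle of Theorem \ref{thm:plap}(1) in both directions gives $v=u_{K_t}$, which is \eqref{eq:ktkk}. The $C^1$ regularity assumed in the statement ensures the traces are attained pointwise, so the boundary identification in Step 2 is legitimate.
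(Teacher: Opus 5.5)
Your route is the paper's: define $v(y)=\ukk((1+t)y,t)$, use the dilation invariance of $\lap_p$, check the boundary data, and conclude by uniqueness. Your Steps 1 and 2 match the paper. The difference is Step 3, and there you found a real defect that the paper's proof glosses over. The paper argues that $\del N\cap\mathring K^t=\del N\cap\mathring K_t$ and ``therefore'' $(1+t)y\in\del N$ whenever $y\in\del N\cap\mathring K_t$; this does not follow, because $N$ is not dilation invariant.

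Your first remedy is the right one. Pose \eqref{eq:omtdir1} on $\mathring K^t\cap(1+t)N$, with datum $u_0(x/(1+t))$ on $\del\big((1+t)N\big)\cap\mathring K^t$. Note that $\del K^t=(1+t)\del K_t\subset(1+t)N$ automatically, and $x/(1+t)$ then lies on $\del N\cap\mathring K_0$. With this reading, the dilation maps the problem exactly onto \eqref{eq:kdir} on $K_t\cap N$, and comparison finishes the proof.

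Your second remark, however, is false and must be removed. A constant inner datum $\eps_0$ does not make the solution independent of where the inner boundary lies. In fact no argument can bridge the two readings, because with $N$ fixed the identity fails. Here is a counterexample.
\begin{itemize}
\item Take $K=K_0=\bar B_{R_0}$, $N=\{r<|x|<R_0+\delta\}$, $K'=\bar B_{R'}$ with $R'\neq R_0$, and $t<0$ small. Then $K_t=\bar B_{R_t}$ with $R_t\neq R_0$.
\item On the sphere $|y|=r/(1+t)$, which lies inside $(r,R_t)$, the point $(1+t)y$ is on the inner boundary $\{|x|=r\}$. So the fixed-$N$ problem gives $\ukk((1+t)y,t)=u_0(y)$, and the claimed identity would force $u_{K_t}=u_0$ on that sphere.
\item But $u_{K_t}$ and $u_0$ are radial $p$-harmonic functions. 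They are therefore affine functions of $|y|^{(p-n)/(p-1)}$ (of $\log|y|$ if $p=n$), and both equal $\eps_0$ on $|y|=r$.
\item Agreeing on a second sphere would make them identical, hence $R_t=R_0$, a contradiction.
\end{itemize}
So what you (and implicitly the paper) can prove is the lemma with $\ukk(\cdot,t)$ defined through the rescaled collar $(1+t)N$. You should state this explicitly as the definition rather than claim it is equivalent to the fixed-$N$ problem. The properties in \eqref{eq:Gprops1} imported from the earlier work then have to be read with that same definition.
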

\begin{proof}
Let us denote $v_t(y)=\ukk((1+t)y, t)=\ukk(x,t)$ with 
$x=(1+t)y\in K^t\cap N$ for any $y\in K_t\cap N$. We show that
\begin{equation}\label{eq:v}
 \begin{cases}
  \dv \big(|\gr v_t|^{p-2}\gr v_t\big)=0,\ \ &\text{in}\ \mathring K_t\cap N;\\
  v_t(y)= 0, \ \  &\forall\ y\in \del K_t\cap N;\\
  v_t(y)= u_0(y),\ \  &\forall\ x\in \del N\cap \mathring K_t.
 \end{cases}
\end{equation}
Indeed, note that 
from \eqref{eq:omtdir1}, 
$-\lap_p v_t=0$ in $\mathring K_t\cap N$, 
$v_t(y)=0$ for $y\in \del K_t\cap  N$, 
and when $\tau>0$ is small enough, 
$\del N\cap \mathring K^t=\del N\cap \mathring K =\del N\cap \mathring K_t$ for all $|t|<\tau$. Therefore, for any $y\in\del N\cap \mathring K_t$ 
we have $x=(1+t)y\in \del N\cap \mathring K^t$ and from \eqref{eq:omtdir1}, 
$$v_t(y)=\ukk(x,t)=u_0(x/(1+t))=u_0(y).$$ 
Comparing \eqref{eq:v} with \eqref{eq:kdir} and using uniqueness, we conclude $v_t=u_{K_t}$ in 
$K_t\cap N$. 
\end{proof}
Here onwards, we denote a smaller neighborhood $\mathcal N(K )\sub\mathcal N_\tau(K )\cap\mathcal N^\tau(K )$. 
\begin{corollary}\label{cor:comp}
Given $\T$ as in \eqref{eq:defT}, $\Gamma$ and convex sets $K^t$ and $K_t$ as above, 
\begin{equation}\label{eq:comp}
\T(K_t)= \frac{\Gamma(K^t) }{(1+t)^{n-p+1}}
\end{equation}
holds whenever $K_t, K^t\in \mathcal N(K )$. 
\end{corollary}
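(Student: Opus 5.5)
The plan is to compute $\T(K_t)$ directly from its definition \eqref{eq:defT} and change variables by the dilation $x=(1+t)y$, which maps $K_t$ onto $K^t$. By Lemma \ref{lem:tg}, $u_{K_t}(y)=\ukk((1+t)y,t)$ for $y\in K_t\cap N$. Differentiating this identity gives
\[
\gr u_{K_t}(y)=(1+t)\,\gr \ukk\big((1+t)y,t\big).
\]
Hence, at corresponding boundary points $y\in\del K_t$ and $x=(1+t)y\in\del K^t$, we have $|\gr u_{K_t}(y)|^{p-1}=(1+t)^{p-1}|\gr\ukk(x,t)|^{p-1}$.

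Next we record how the remaining ingredients of $\T$ transform. Since the dilation preserves normals, $\g_{K_t}(y)=\g_{K^t}(x)$. Homogeneity of support functions gives $h_{K_t}=h_{K^t}/(1+t)$. The surface measure scales as $d\h^{n-1}(y)=(1+t)^{-(n-1)}d\h^{n-1}(x)$.

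Substituting all of this into
\[
\T(K_t)=\int_{\del K_t}(h_{K_t}\circ\g_{K_t})\,|\gr u_{K_t}|^{p-1}\,d\h^{n-1}
\]
yields
\[
\T(K_t)=(1+t)^{-1+(p-1)-(n-1)}\int_{\del K^t}(h_{K^t}\circ\g_{K^t})\,|\gr\ukk(\cdot,t)|^{p-1}\,d\h^{n-1}.
\]
The exponent is $p-n-1=-(n-p+1)$, as required.

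It remains to identify the last integral with $\Gamma(K^t)$. We use \eqref{eq:intbd} with $F_{K^t}=\gr h_{K^t}=\inv{\g_{K^t}}$, assuming $K,K'\in C^2_+$, which is permitted. This turns the boundary integral into
\[
\int_{\Snn}h_{K^t}(\xi)\,|\gr\ukk(\gr h_{K^t}(\xi),t)|^{p-1}\det\big(\covtwo h_{K^t}+h_{K^t}\I\big)\,d\xi,
\]
which is exactly $\Gamma(K^t)$ with the measure $\tilde\mu_{K^t}$ defined earlier.

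The only delicate point is justifying the pointwise boundary gradient identity and the use of \eqref{eq:intbd}. This needs $\gr\ukk$ and $\gr u_{K_t}$ to extend continuously to the boundary, which holds for $C^2_+$ convex sets by boundary regularity. Where the $C^2_+$ assumption is not available, I would instead approximate by $C^2_+$ sets via \eqref{eq:c2ap} and pass to the limit.

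Finally, $\tau$ must be small enough that both $K_t$ and $K^t$ lie in $\mathcal N(K)$. This guarantees $\del N\cap\mathring K^t=\del N\cap\mathring K_t$, so that Lemma \ref{lem:tg} applies on the whole region.
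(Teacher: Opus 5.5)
Your proof is correct and follows essentially the paper's route: Lemma \ref{lem:tg} gives $\nabla u_{K_t}(y)=(1+t)\nabla \ukk((1+t)y,t)$, homogeneity gives $h_{K_t}=h_{K^t}/(1+t)$, and you count the powers of $(1+t)$. The only cosmetic difference is where the remaining factor comes from. You change variables on $\partial K_t\to\partial K^t$ using the scaling of surface measure and then pass to $\Snn$ with \eqref{eq:intbd}. The paper instead works directly with the spherical densities, where the factor $(1+t)^{-(n-1)}$ comes from the degree-$(n-1)$ homogeneity of $\det(\covtwo h+h\I)$.
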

\begin{proof}
From Lemma \ref{lem:tg}, note that $\gr u_{K_t}(y))= (1+t)\gr \ukk((1+t)y,t)$ for any $y\in K_t\cap N$ and from homogeneity of support function, 
$h_{K_t}(\cdot)=h_{K^t}(\cdot)/(1+t)$. Using these and recalling \eqref{eq:mukint}, we obtain 
\begin{align*}
d\mu_{K_t}(\xi)&= |\gr u_{K_t}(\gr h_{K_t}( \xi))|^{p-1} \det(\covtwo  h_{K_t}(\xi) + h_{K_t}(\xi) \I)d\xi\\
&=\frac{1}{(1+t)^{n-p}} 
|\gr u(\gr h_{K^t}( \xi),t)|^{p-1} \det(\covtwo  h_{K^t}(\xi) + h_{K^t}(\xi) \I)d\xi =\frac{1}{(1+t)^{n-p}} d\tilde\mu_{K^t}( \xi)
\end{align*}
which, together with \eqref{eq:defT} and the above, completes the proof. 
\end{proof}
This leads to local homogeneity and a similar Hadamard-type variational formula for $\T$. 
\begin{proposition}\label{prop:homhad}
Given $\mathcal N(K )\sub\mathcal N_\tau(K )\cap\mathcal N^\tau(K )$  for $\tau>0$ small enough, there exists $0<\tau'\leq \tau$ such that we have the following:
\begin{enumerate}
\item $\T( \lambda K ) = \lambda^{n-p+1} \T(K )$ for all $\lambda\in (1-\tau', 1+\tau')$;\\
\item for any $K_t=(K + tK')/(1+t) \in \mathcal N(K )$ with $|t|<\tau'$, we have  
\begin{equation}\label{eq:Thadamard}
\frac{d}{dt}\Big|_{t=0} \T (K_t)= (n-p+1) \int_{\Snn} (h_{K'}-h_{K }) 
\,d\mu_{K }. 
\end{equation}
\end{enumerate}
\end{proposition}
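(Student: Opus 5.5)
Both parts will follow from Corollary \ref{cor:comp}, which writes $\T(K_t)(1+t)^{n-p+1}=\Gamma(K^t)$, combined with the two properties of $\Gamma$ in \eqref{eq:Gprops1}.

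For (1), take $K'=K$ in \eqref{eq:Kt}. Then $K^t=(1+t)K$ and $K_t=K$, so Corollary \ref{cor:comp} alone is circular. A better route is to write $\lambda K=K_s$ for a suitable auxiliary choice. Set $K'=cK$ with $c\neq 1$ fixed. Then
\[
K_t=\frac{1+ct}{1+t}K ,\qquad K^t=(1+ct)K .
\]
Corollary \ref{cor:comp} and \eqref{eq:Gprops1}(i) then give
\[
\T(K_t)=(1+t)^{-(n-p+1)}(1+ct)^{n-p+1}\Gamma(K)=\Big(\tfrac{1+ct}{1+t}\Big)^{n-p+1}\T(K),
\]
where the last step uses $\Gamma(K)=\T(K)$ from \eqref{eq:gk0}. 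As $t$ ranges over a small interval, $\lambda=(1+ct)/(1+t)$ covers a neighbourhood $(1-\tau',1+\tau')$ of $1$. We choose $\tau'$ so that all sets involved stay in $\mathcal N(K)$ and all $t$ stay below $\tau$; this yields (1). The one point to check is that the requirement $0\in K'$ holds, which it does if $0\in K$; otherwise translate, noting that $\T$ and the Dirichlet data are compatible with this normalisation as in \eqref{eq:nbdK0}.

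For (2), differentiate \eqref{eq:comp} at $t=0$ using the product rule:
\[
\frac{d}{dt}\Big|_{t=0}\T(K_t)=\frac{d}{dt}\Big|_{t=0}\Gamma(K^t)-(n-p+1)\Gamma(K).
\]
By \eqref{eq:Gprops1}(ii) the first term equals $(n-p+1)\int_{\Snn}h_{K'}\,d\tilde\mu_K$, and by \eqref{eq:gk0} we have $\tilde\mu_K=\mu_K$ and $\Gamma(K)=\T(K)=\int_{\Snn}h_K\,d\mu_K$ (see \eqref{eq:defT}). Combining these gives \eqref{eq:Thadamard}.

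The main obstacle is justifying that Lemma \ref{lem:gomt} applies. It requires $h_{K'}>0$ and the differentiability of $t\mapsto\Gamma(K^t)$; positivity can be arranged by placing the origin in the interior of $K'$, after a translation. We also need the Dirichlet data of \eqref{eq:omtdir1} to match those in \eqref{eq:omtdir}, which holds since both use $u_0(x/(1+t))$ on $\del N$. Finally, the $C^2_+$ assumption is obtained by approximation via \eqref{eq:c2ap}, passing to the limit using the stability of $p$-harmonic measures under Hausdorff convergence.
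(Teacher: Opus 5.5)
Your proposal is correct and coincides with the paper's proof. For (1) the paper also takes $K'=\delta K$ (your $cK$) in \eqref{eq:comp}, writes $\lambda=(1+\delta t)/(1+t)$ and applies \eqref{eq:Gprops1}(i) together with \eqref{eq:gk0}. For (2) it differentiates \eqref{eq:comp} at $t=0$ and uses \eqref{eq:Gprops1}(ii) and \eqref{eq:gk0}, exactly as you do.

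Your closing remarks on $h_{K'}>0$, translations and $C^2_+$ approximation are not part of the paper's argument, which simply takes \eqref{eq:Gprops1} as given. The translation device would need care anyway: $N$ is fixed, so $\T$ is not evidently translation invariant.
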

\begin{proof}
Let $\lambda=(1+\delta t)/(1+t)$ so that taking $K'=\delta K $ on \eqref{eq:comp} and using \eqref{eq:Gprops1} and \eqref{eq:gk0}, we obtain 
$$ \T(\lambda K )=  \frac{\Gamma((1+\delta t)K ) }{(1+t)^{n-p+1}}
=\frac{(1+\delta t)^{n-p+1}}{(1+t)^{n-p+1}}\Gamma(K )=
\lambda^{n-p+1} \T(K ),
$$
which proves the first part. To prove the next, by differentiating \eqref{eq:comp}, we obtain 
$$ \frac{d}{dt} \T(K_t)= \frac{1}{(1+t)^{n-p+1}}\frac{d}{dt}\Gamma(K^t) 
-\frac{(n-p+1)}{(1+t)^{n-p+2}}\Gamma(K^t), 
$$
which at $t=0$ together with \eqref{eq:Gprops1} and \eqref{eq:gk0}, 
completes the proof. 
\end{proof}

\subsection{Proof of the Theorem}
The proof of Brunn-Minkowski inequality shall follow as a consequence of the limiting characterization in Proposition \ref{prop:limchar} and Corollary \ref{cor:subsolncomp}. To use \eqref{eq:limchar} of Proposition \ref{prop:limchar}, we require comparison between the infima of distance to the boundaries with respect to convex combinations. Such comparison can be established by looking into the projected points on the boundaries where the infima are achieved. 

To this end, we require some preparation. Given any $K\in  \mathcal{N}(K_0)$, let us denote 
\begin{equation}\label{eq:omsk}
\Omega_{s}(K) = \{x\in K : u_{K}(x) > s\}
\end{equation}
for any $ s\geq 0$, where $u_K$ solves the Dirichlet problem \eqref{eq:kdir} or \eqref{eq:kdir1}. Thus $\bar\Omega_{0}(K) = K$ and $ \partial\Omega_{s}(K) = \{u_{K} = s\}$. For any $x\in\partial\Omega_{s}(K), \mathbf{g}_{\Omega_{s}(K)}(x) = -\nabla u_{K}(x)/|\nabla u_{K}(x)|$ for all $s\geq 0$. Convexity of $\Omega_{s}(K)$ for all $0\leq s<\eps_0$ from Theorem \ref{thm:conv}, implies that for any $x\in \R^n$ there exists a unique projection $p_{\partial K}(x) \in\partial K$ such that
\begin{equation}\label{eq:proj1}
    |x - p_{\partial K}(x)| = \mathrm{dist}(x,\partial K) \quad \text{and}\quad \mathbf{g}_{\Omega_{s}}(x) = \frac{x - p_{\partial K}(x)}{|x-p_{\partial K}(x)|}.
\end{equation}
Therefore, from \eqref{eq:supg} and \eqref{eq:proj1}, note that,
\begin{equation}\label{eq:proj2}
    h_{\Omega_{s}(K)}\left(\mathbf{g}_{\Omega_{s}(K)}(x)\right) = \left\langle x,\mathbf{g}_{\Omega_{s}(K)}(x) \right\rangle = \frac{\left\langle x,x-p_{\partial K}(x) \right\rangle}{\mathrm{dist}(x,\partial K)}.
\end{equation}
Thus we obtain the following identity, 
\begin{equation}\label{eq:proj3}
    \frac{1}{\mathrm{dist}(x,\partial K)} = \frac{h_{\Omega_{s}(K)}\left(\mathbf{g}_{\Omega_{s}(K)}(x)\right)}{\left\langle x,x-p_{\partial K}(x) \right\rangle},
\end{equation}
which shall be used together with Proposition \ref{prop:limchar}, in the proof of  Brunn-Minkowski inequality. To estimate the denominator of the right hand side of \eqref{eq:proj3}, we require the following lemma, which is a variant of standard projection inequalities. 
\begin{lemma}\label{lem:projineq}
Given any $x\in \R^n$ and a tangent vector $v\in T_{p_{\partial K}(x)}(\partial K)$, for a convex domain $K$ of class $C^1$, we have the following inequality, 
    \begin{equation}\label{eq:projineq}
        \left\langle x - p_{\partial K}(x), x \right\rangle \leq \left\langle x - p_{\partial K}(x), x - v \right\rangle.
    \end{equation}
\end{lemma}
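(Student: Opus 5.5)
The inequality \eqref{eq:projineq} is equivalent to $\langle x-p_{\partial K}(x), v\rangle \geq 0$. I would prove the stronger statement that this inner product vanishes, i.e. that $x-p_{\partial K}(x)$ is orthogonal to $T_{p_{\partial K}(x)}(\partial K)$. Subtracting $\langle x-p_{\partial K}(x), x\rangle$ from both sides of \eqref{eq:projineq} shows the reduction directly. The natural tool is the first-order optimality condition for the minimization defining $p_{\partial K}(x)$. It needs only that $\partial K$ is a $C^1$ hypersurface and that $p_{\partial K}(x)$ is a nearest point; uniqueness of the projection is not used.

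First I would set $p=p_{\partial K}(x)$. Such a point exists because $\partial K$ is compact and $y\mapsto |x-y|$ is continuous; when it is unique it is the point of \eqref{eq:proj1}. If $x=p$ the inequality is trivial, with both sides equal to $0$, so assume $x\neq p$. Since $\partial K$ is a $C^1$ hypersurface, for the given $v\in T_p(\partial K)$ there is a $C^1$ curve $\gamma:(-\delta,\delta)\to\partial K$ with $\gamma(0)=p$ and $\gamma'(0)=v$. Such a curve can be built, after a rotation, from the local graph representation of $\partial K$ near $p$ recalled in Section \ref{sec:prelim}.

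Next, consider $f(\sigma)=|x-\gamma(\sigma)|^2$. By the choice of $p$, $f$ attains its minimum over $(-\delta,\delta)$ at $\sigma=0$. Since $f$ is $C^1$, we get
\[
0=f'(0)=-2\langle x-p,\gamma'(0)\rangle=-2\langle x-p, v\rangle .
\]
Therefore $\langle x-p, x-v\rangle=\langle x-p,x\rangle-\langle x-p,v\rangle=\langle x-p,x\rangle$, so \eqref{eq:projineq} holds, in fact with equality. This is consistent with \eqref{eq:proj1}: $x-p$ is parallel to the normal $\mathbf{g}_K(p)$, which annihilates $T_p(\partial K)$.

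The only delicate point I expect is the existence of the $C^1$ curve through $p$ with prescribed tangent $v$. This is where $C^1$ regularity of $\partial K$ enters. Under the weaker Lipschitz regularity of general convex sets, the tangent space might not be defined at $p$; there one would instead use that $x-p$ lies in the normal cone and argue with supporting hyperplanes. In the stated $C^1$ setting the curve argument is enough, and convexity is used only through the existence of the projection.
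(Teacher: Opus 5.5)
Your argument is correct and is essentially the paper's: both use the first-order optimality of $f(\sigma)=|x-\gamma(\sigma)|^2$ along a curve $\gamma$ in $\partial K$ with $\gamma(0)=p_{\partial K}(x)$ and $\gamma'(0)=v$. The paper's curve is one-sided on $[0,\delta]$ and yields only $f'(0)\ge 0$, whereas your two-sided curve built from the local $C^1$ graph gives $f'(0)=0$ and hence equality in \eqref{eq:projineq}; your graph construction also fits $C^1$ regularity better than the paper's exponential-map example, which needs more smoothness. One small slip: \eqref{eq:projineq} is equivalent to $\langle x-p_{\partial K}(x),v\rangle\le 0$, not $\ge 0$, but this is harmless because you show the inner product vanishes.
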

\begin{proof}
 Given a tangent vector $v\in T_{p_{\partial K}(x)}(\partial K)$, let $\gamma: [0, \delta]\to \partial K$, for some small $\delta> 0$, be any curve on $\del K$, such that $\gamma(0) = p_{\partial K}(x)$ and $\gamma'(0) = v$, (e.g. $\gamma(t) = \exp_{p_{\partial K}(x)}(tv)$, the flow exponential of $\del K$). Let $f:[0,\delta] \to \mathbb{R}$ be given by
    \begin{equation*}
        f(t) = |x-\gamma(t)|^{2},
    \end{equation*}
    so that $f'(t) = -2\langle x-\gamma(t), \gamma'(t) \rangle$. Since $f$ 
    is a non-negative function that satisfies 
    \begin{equation*}
    f(0) = |x - p_{\partial K}(x) |^{2} = \mathrm{dist}(x,\partial K)^{2} = \underset{t\in [0,s]}{\min} f(t), 
    \end{equation*}
    from \eqref{eq:proj1}, therefore we must have $f'(0) \geq 0$. This implies 
     \begin{equation*}
        -\langle x - p_{\partial K}(x),v \rangle \geq 0,
    \end{equation*} 
 and by adding $\langle x - p_{\partial K}(x) , x \rangle$ to both sides the proof is finished. 
\end{proof}
In the following, we prove another technical lemma to establish the decay rate of 
\eqref{eq:proj3} at points in the boundary of sub-level sets \eqref{eq:omsk}. 
\begin{lemma}\label{lem:decrt}
Given any $K\in  \mathcal{N}(K_0)$ and $x\in \del\Om_s(K)$ with $\Om_s(K)$ as in \eqref{eq:omsk}, we have 
\begin{equation}\label{eq:decrt}
\frac{1}{\mathrm{dist}(x,\partial K)} 
\geq \frac{1}{s \|1/|\gr u_K|\|_{L^\infty(\del K)}},
\end{equation}
 where $s>0$ and $u_K$ is the solution of the Dirichlet problem \eqref{eq:kdir}. 
\end{lemma}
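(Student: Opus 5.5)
The inequality \eqref{eq:decrt} is equivalent to $\mathrm{dist}(x,\partial K)\leq s\,\|1/|\gr u_K|\|_{L^\infty(\del K)}$ for $x\in\del\Om_s(K)$. So the plan is to bound the distance from the level set $\{u_K=s\}$ to $\del K$ by $s$ times an upper bound for $1/|\gr u_K|$. The natural tool is the support-function description of sub-level sets from Lemma \ref{lem:ids1}(3): for $h(\xi,t)=h_{\Om_t(K)}(\xi)$ we have $\del_t h(\xi,t)=-1/|\gr u_K(\inv{\g_{\Om_t(K)}}(\xi))|$.

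First I will write the distance in terms of support functions. The sets $\Om_t(K)$ are nested and convex by Theorem \ref{thm:conv}, with $\Om_s(K)\subset K$. For $x\in\del\Om_s(K)$, set $\xi=\g_{\Om_s(K)}(x)$. Since $h(\xi,s)=\inp{x}{\xi}$ and $K$ lies in $\{y:\inp{y}{\xi}\leq h_K(\xi)\}$, the supporting hyperplane of $K$ with normal $\xi$ satisfies
$$\mathrm{dist}\big(x, H_K(\xi)\big)=h_K(\xi)-h(\xi,s).$$
I need $\mathrm{dist}(x,\del K)$ to be at most this. The segment from $x$ in direction $\xi$ must leave $K$ before it crosses $H_K(\xi)$. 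Hence it meets $\del K$ at distance at most $h_K(\xi)-\inp{x}{\xi}$, and so
$$\mathrm{dist}(x,\del K)\leq h(\xi,0)-h(\xi,s).$$

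Next I integrate in $t$. By the fundamental theorem of calculus and Lemma \ref{lem:ids1}(3),
$$h(\xi,0)-h(\xi,s)=\int_0^s\frac{dt}{|\gr u_K(\inv{\g_{\Om_t(K)}}(\xi))|}\leq s\,\sup_{0\leq t\leq s}\big\|1/|\gr u_K|\big\|_{L^\infty(\del\Om_t(K))}.$$
This requires $h\in C^1$ in $t$, which Lemma \ref{lem:ids1} gives; if needed I first approximate $K$ by $C^2_+$ sets via \eqref{eq:c2ap}.

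The main obstacle is the last step: replacing the supremum of $1/|\gr u_K|$ over the thin ring $\{0\leq u_K\leq s\}$ by its value on $\del K$ alone, as the statement requires. My first attempt would be a maximum/minimum principle for $|\gr u_K|$ on the ring. Another option is to read the statement as holding in the limit $s\to0^+$, which is all that is needed with Proposition \ref{prop:limchar}; then continuity of $\gr u_K$ up to $\del K$ together with $|\gr u_K|>0$ there (from Lemma \ref{lem:LN}, since $u(x)/\mathrm{dist}(x,\del K)$ is comparable to $|\gr u_K|$) gives the bound up to a factor $1+o(1)$.

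A cleaner route avoids this issue entirely. Fix the point $p_{\partial K}(x)$ and integrate $|\gr u_K|$ along the inward normal segment from $p_{\partial K}(x)$ to the level set. Then $s=u_K(x)$ controls the distance through the normal derivative at boundary points, and this uses only boundary values of $\gr u_K$. I would try this version first, and fall back to the asymptotic version if it fails.
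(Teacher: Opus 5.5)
\textbf{There is a genuine gap in the final step.} Your reduction is correct. With $\xi=\g_{\Om_s(K)}(x)$, the ray argument gives $\dist(x,\del K)\le h_K(\xi)-h_{\Om_s(K)}(\xi)$. This is a pointwise substitute for the paper's bound through $d_\h(\Om_s(K),K)=\|h_{\Om_s(K)}-h_K\|_{L^\infty(\Snn)}$. Lemma \ref{lem:ids1}(3) then turns the right-hand side into $\int_0^s dt/|\gr u_K(\inv{\g_{\Om_t(K)}}(\xi))|$.

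But the proof stops where the content of the lemma lies. You never show that this integrand is bounded by its value at $t=0$, and none of your fallbacks supplies it:
\begin{itemize}
\item A minimum principle for $|\gr u_K|$ is not available in general. Already for harmonic functions in $\R^3$ with non-vanishing gradient, $|\gr u|$ can have strict interior local minima. Even on the ring, such a principle would not say the minimum is on $\del K$ rather than on the inner boundary $\del N\cap K$.
\item The asymptotic reading only yields $\dist(x,\del K)\le s(1+o(1))\|1/|\gr u_K|\|_{L^\infty(\del K)}$, which is weaker than the statement.
\item Integrating along the inward normal segment from $p_{\del K}(x)$ needs a lower bound for $\inp{\gr u_K}{-\g_K(p_{\del K}(x))}$ at interior points of that segment. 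That is the same interior-versus-boundary comparison again, and boundary values of $\gr u_K$ do not determine it.
\end{itemize}

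\textbf{The missing idea follows from Proposition \ref{prop:plaphu}.} Set $h=h_{u_K}$ and $M=\covtwo h+h\I=\inv{\Wg_{u_K}}$, which is positive definite because the level sets are $C^2_+$. The equation $\lap_p u_K=0$ makes the bracket in that proposition vanish, that is,
\begin{equation*}
\del_t^2 h=\frac{(\del_t h)^2}{p-1}\,\Tr\big(\inv{M}\big)+\inp{\inv{M}\cov\del_t h}{\cov\del_t h}\geq 0 .
\end{equation*}
So $t\mapsto h(\xi,t)$ is convex, and $t\mapsto -\del_t h(\xi,t)=1/|\gr u_K(\inv{\g_{\Om_t(K)}}(\xi))|$ is nonincreasing. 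In words, $|\gr u_K|$ grows as one moves inward along a curve of constant normal. Your integral is therefore at most $s/|\gr u_K(\inv{\g_K}(\xi))|\le s\,\|1/|\gr u_K|\|_{L^\infty(\del K)}$, which proves \eqref{eq:decrt}.

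The paper's own proof also passes over this point. It writes $\|h_{u_K}(\cdot,s)-h_{u_K}(\cdot,0)\|_{L^\infty(\Snn)}\le s\|\del_t h_{u_K}(\cdot,0)\|_{L^\infty(\Snn)}$ in one line. That is a mean value estimate evaluated at $t=0$, and it is justified precisely by this monotonicity of $\del_t h_{u_K}$ in $t$.
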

\begin{proof}
For any $x\in \del\Om_s(K)$, we have $\dist(x, \del\Om_s(K))=0$. Therefore, recalling the definition of Hausdorff distance
\eqref{eq:defdh} and it's relation to support functions \eqref{eq:hdh}, note that 
\begin{equation}\label{eq:omstok}
\begin{aligned}
\dist(x, \del K) &\leq d_\h(\del\Om_s(K), \del K)= d_\h(\Om_s(K), K)= \|h_{\Om_s(K)} -h_K\|_{L^\infty(\Snn)} \\
&=\|h_{u_K}(\cdot, s) -h_{u_K}(\cdot, 0)\|_{L^\infty(\Snn)}\leq s \|\del_t h_{u_K}(\cdot, 0)\|_{L^\infty(\Snn)}, 
\end{aligned}
\end{equation}
where $h_{u_K}$ is as in \eqref{eq:hu}. 
Then \eqref{eq:decrt} follows from (3) of Lemma \ref{lem:ids1} to finish the proof. 
\end{proof}
Now we are ready to prove the Brunn-Minkowski inequality. We choose any $K_1, K_2\in \mathcal{N}(K_0)$. Recalling the definition \eqref{eq:nbdK0} of $\mathcal{N}_\tau(K_0)$, there exists $t_1, t_2\in (-\tau,\tau)$ such that 
\begin{equation}\label{eq:K12}
K_1=K_{t_1}=\frac{K_{0} + t_1K'}{(1+t_1)}, \quad\text{and}\quad K_2=K_{t_2}=\frac{K_{0} + t_2K'}{(1+t_2)}.
\end{equation}
For any $\lambda\in [0,1]$, let us denote the linear combination 
$K_\lambda =(1-\lambda)K_1+\lambda K_2$. It is easy to see that we have 
$K_\lambda \in \mathcal{N}(K_0)$ and from \eqref{eq:K12}, the following holds, 
\begin{equation}\label{eq:Klamb}
K_\lambda=K_{t_\lambda}=\frac{K_{0} + t_\lambda K'}{(1+t_\lambda)}, \quad\text{where},\quad
\frac{1}{1+t_\lambda}=\frac{1-\lambda}{1+t_1}+\frac{\lambda}{1+t_2}.
\end{equation}
The following is an equivalent form of the Brunn-Minkowski inequality. 
\begin{theorem}\label{thm:bm1}
For any convex $K_1, K_2 \in \mathcal{N}(K_0) $ and any $\lambda\in[0,1]$ we have 
\begin{equation}\label{eq:bm1}
\T((1-\lambda)K_{1} + \lambda K_{2}) \geq \min\,\{\T(K_{1}), \T(K_{2})\},
\end{equation}
where $\T$ is as in \eqref{eq:defT}. 
%Moreover, equality holds if $K_1$ and $K_2$ are homothetic. 
\end{theorem}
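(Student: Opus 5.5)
The plan is to combine the limiting characterization of Proposition \ref{prop:limchar} with the pointwise comparison \eqref{eq:ukineq} coming from Corollary \ref{cor:subsolncomp}. By the approximation \eqref{eq:c2ap} and continuity of $\T$ under Hausdorff convergence, I will first assume $K_1,K_2\in C^2_+$, so that $K_\lambda\in C^2_+$ as well and all earlier results apply. Write $u_i=u_{K_i}$ and $u_\lambda=u_{K_\lambda}$. Since $\{u^*_\lambda\geq s\}=(1-\lambda)\Om_s(K_1)+\lambda\Om_s(K_2)$ by \eqref{eq:conprop}, and $u^*_\lambda\le u_\lambda$, we get $(1-\lambda)\Om_s(K_1)+\lambda\Om_s(K_2)\subset \Om_s(K_\lambda)$. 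In terms of support functions this reads
\[
h_{\Om_s(K_\lambda)}\ge (1-\lambda)h_{\Om_s(K_1)}+\lambda h_{\Om_s(K_2)} \quad\text{on } \Snn.
\]

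The key reformulation is to write the integrand of \eqref{eq:limchar} through \eqref{eq:proj3}. For $x\in\del\Om_s(K)$ we have $u=s$, so the integrand becomes $s^{p-1}\big(h_{\Om_s(K)}(\g(x))/\inp{x}{x-p_{\del K}(x)}\big)^{p-1}$. I will then pass from $\del\Om_s(K)$ to $\Snn$ through the Gauss map $\g_{\Om_s(K)}$, using \eqref{eq:intbd}. This turns $\int_{\del\Om_s(K)}(u/\dist)^{p-1}\,d\h^{n-1}$ into
\[
s^{p-1}\int_{\Snn}\Big(\frac{h_s}{D_s}\Big)^{p-1}\det(\covtwo h_s+h_s\I)\,d\xi,
\]
where $h_s=h_{\Om_s(K)}$ and $D_s(\xi)=\inp{x}{x-p_{\del K}(x)}$ at $x=\gr h_s(\xi)$. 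The normalization $\inp{x}{x-p}$ is a $1$-homogeneous quantity in the convex body. Lemma \ref{lem:projineq} lets me bound it from above by $\inp{x-p}{x-v}$ for any tangent $v$, and I will choose $v$ so that the bound becomes linear under Minkowski combination at a common normal $\xi$. The point is that the outer parallel structure $x-p_{\del K}(x)=\dist(x,\del K)\,\xi$ has the same direction $\xi$ for all three bodies at the points $x_1,x_2$ and $x_\lambda=(1-\lambda)x_1+\lambda x_2$ given by \eqref{eq:invlin}. Lemma \ref{lem:decrt} guarantees that $\dist\sim s$ uniformly, so the ratios $h_s/D_s$ stay bounded as $s\to0$.

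Next I will combine these ingredients. The integrand is $(h_s/D_s)^{p-1}$ times the surface-area density. The numerators $h_s$ are superadditive by the inclusion above. The determinants satisfy the Minkowski-type inequality $\det(\cdot)^{1/(n-1)}$ concave, as in \eqref{eq:gcminkadd}. For the denominators I aim to show $D_s^{\lambda}\le (1-\lambda)D_s^{1}+\lambda D_s^{2}$, using Lemma \ref{lem:projineq}. These three facts feed into a pointwise inequality on $\Snn$ of the form
\[
F_\lambda(\xi)\ge \min\{F_1(\xi),F_2(\xi)\}\quad\text{or}\quad F_\lambda\ge (1-\lambda)F_1+\lambda F_2 .
\]
Integrating, dividing by $s^{p-1}$ (which cancels against the constant absorbed in $c$), and letting $s\to0^+$ gives \eqref{eq:bm1} through Proposition \ref{prop:limchar}, provided the constants $c$ agree for $K_1,K_2,K_\lambda$. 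I will secure that agreement by shrinking $\mathcal N(K_0)$ so that $\gamma$ and $\diam$ are uniformly controlled and $\eps_0$ is common.

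The main obstacle is the denominator step, together with the fact that Proposition \ref{prop:limchar} only gives $\T$ up to the comparability constant $c$. An inequality between the limits therefore transfers to $\T$ only if $c$ is literally the same function of the body. I would try to make this rigorous by tracking that the constant in \eqref{eq:fineq} is explicit, namely $1/n$ plus the Lemma \ref{lem:apptech} term. If the reduction is done carefully, that term can be rewritten exactly rather than merely bounded.
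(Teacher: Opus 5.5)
Your outline follows the paper's route step for step: the limit \eqref{eq:limchar}, the inclusion \eqref{eq:BMp1} from Corollary \ref{cor:subsolncomp}, the rewriting \eqref{eq:proj3}, matching points with a common normal $\xi$, Lemma \ref{lem:projineq} for the denominator, a determinant comparison, and Lemma \ref{lem:decrt}. The two steps you flag are genuine gaps, and the tools you propose cannot close them.

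\emph{The denominator.} Take $v\in T_{p}(\del K)$ with $p=p_{\del K}(x)$. Running the proof of Lemma \ref{lem:projineq} with $v$ and with $-v$ gives $\inp{x-p}{v}=0$. So \eqref{eq:projineq} is an identity, and no choice of $v$ can relate $D_s^\lambda$ to $D_s^1,D_s^2$. Moreover, by \eqref{eq:proj2} $D_s=\dist(x,\del K)\,h_{\Om_s(K)}(\xi)$, so $h_s/D_s\equiv 1/\dist(x,\del K)$. This is of order $1/s$, not bounded, and the quotient is only a tautological rewriting: your target $D_s^\lambda\le(1-\lambda)D_s^1+\lambda D_s^2$ is again an unproved inequality between distances. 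The paper closes this step by picking $v_i\in T_{p_{\del K_\lambda}(x)}(\del K_\lambda)\cap H^+_{i,\lambda}$. But $\inp{v_i}{\xi}=0$, so such a $v_i$ exists iff $\inp{x}{x-p_{\del K_\lambda}(x)}\le\inp{x_i}{x_i-p_{\del K_i}(x_i)}$, which is the conclusion of \eqref{eq:BM_ineq_proof_point_6}; that step presupposes what it proves and cannot be borrowed.

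What one can prove is $\dist(x,\del K)\le h_K(\xi)-h_{\Om_s(K)}(\xi)$ for $x\in\del\Om_s(K)$ with normal $\xi$ (follow the ray $x+t\xi$ to $\del K$). Combined with $h_{K_\lambda}=(1-\lambda)h_{K_1}+\lambda h_{K_2}$, \eqref{eq:BMp1} and Lemma \ref{lem:ids1}(3), this yields as $s\to0^+$ the harmonic-mean bound $1/|\gr u_{K_\lambda}|\le(1-\lambda)/|\gr u_{K_1}|+\lambda/|\gr u_{K_2}|$ at boundary points with a common normal.

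\emph{The combination step.} Two further steps fail as written. First, concavity of $M\mapsto(\det M)^{1/(n-1)}$ cannot be applied to $\covtwo h_{\Om_s(\cdot)}+h_{\Om_s(\cdot)}\I$ for $s>0$. The set $\Om_s(K_\lambda)$ only contains $(1-\lambda)\Om_s(K_1)+\lambda\Om_s(K_2)$, and an inequality between support functions says nothing about their second derivatives at $\xi$; you must pass to $s=0$. Second, a pointwise bound $F_\lambda\ge\min\{F_1,F_2\}$ only integrates to $\int F_\lambda\ge\int\min\{F_1,F_2\}$, which is $\le\min\{\int F_1,\int F_2\}$. The paper's \eqref{eq:BM_ineq_proof_point_8} makes the same move. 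You need a mean inequality with the fixed weights $(1-\lambda,\lambda)$. The mediant produced by your numerator and denominator bounds has $\xi$-dependent weights. Multiplying separate lower bounds for the ratio and for the determinant gives something that is in general below $\min\{F_1,F_2\}$.

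\emph{The constant.} Here your worry is decisive, not technical. For $C^2_+$ bodies $u_K$ is $C^1$ up to $\del K$, so $u_K(x)/\dist(x,\del K)\to|\gr u_K(p_{\del K}(x))|$ uniformly. Hence the limit in \eqref{eq:limchar} is exactly $\int_{\del K}|\gr u_K|^{p-1}\,d\h^{n-1}=\mu_K(\Snn)$. The ratio of $\T(K)$ to that limit is therefore the $\mu_K$-average of $h_K$, and this varies with $K$. It equals $R$ for $B_R(0)$ with radial $u$, and it changes under translation of the configuration whenever $\int_{\Snn}\xi\,d\mu_K\neq0$. So Proposition \ref{prop:limchar} can only be a two-sided comparison, and a trivial one, with constants $\min_{\Snn} h_K$ and $\max_{\Snn} h_K$. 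An inequality between the limits then transfers to $\T$ only up to the ratio of the comparability constants, not as \eqref{eq:bm1}.

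Your proposed bookkeeping cannot remove this loss. The Lemma \ref{lem:apptech} term can indeed be computed exactly, since $\int_{\Snn}\inp{a}{\om}\inp{\om}{b}\,d\h^{n-1}(\om)=\frac{\upomega_n}{n}\inp{a}{b}$. But inserting this into \eqref{eq:fineq} and \eqref{eq:point_3} would give $\T(\Om)=\frac n2\int_{\del\Om}|\gr u|^{p-1}\,d\h^{n-1}$. That is false for $B_R(0)$ unless $R=n/2$, since the two sides scale differently. So exact bookkeeping exposes an error upstream rather than producing a universal constant.

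The paper's proof has the same weakness: it simply regards $c$ as uniform over $\mathcal N(K_0)$. Completing the argument needs an idea that handles $\int h_K\,d\mu_K$ directly rather than through Proposition \ref{prop:limchar}.
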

\begin{proof}
We can assume that the domains are $C^{2}_+$ without loss of generality since 
\eqref{eq:bm} for general domains can be shown using approximation \eqref{eq:c2ap} by $C^{2}_+$ domains. 

Now, 
recalling \eqref{eq:limchar} in Proposition \ref{prop:limchar}, we have the following, 
\begin{equation}\label{eq:limchar1}
        \T(K) = c\lim_{s\to 0^{+}}\int_{\del\Om_s(K)} \left(\frac{u_K(x)}{\mathrm{dist}(x,\partial K)}\right)^{p-1} d\mathcal{H}^{n-1}(x),
    \end{equation}
where we can regard $ c = c(n,p, \gamma_0, \mathrm{diam}(K_0)) > 0$, where $\gamma_0$ is modulus of convexity of $K_0$, whenever $K\in \mathcal{N}(K_0)$. 
Therefore, to prove the theorem, it is enough to show
    \begin{equation*}%\label{eq:rtp0}
        \underset{\partial\Omega_{s}(K_{\lambda})}{\int}\left( \frac{u_{K_{\lambda}}(x)}{\mathrm{dist}(x,\partial K_{\lambda})} \right)^{p-1}d\mathcal{H}^{n-1}(x) \geq  \min_{i\in \{1,2\}} \underset{\partial\Omega_{s}(K_{i})}{\int}\left( \frac{u_{K_{i}}(x)}{\mathrm{dist}(x,\partial K_{i})} \right)^{p-1}d\mathcal{H}^{n-1}(x) + o(1).
    \end{equation*}
But from \eqref{eq:omsk}, since $u_{K} = s$ on $\partial\Omega_{s}(K)$ for any $K$, therefore the above is equivalent to  
    \begin{equation}\label{eq:BM_ineq_proof_point_*}
        \underset{\partial\Omega_{s}(K_{\lambda})}{\int}\frac{d\mathcal{H}^{n-1}(x)}{\mathrm{dist}(x,\partial K_{\lambda})^{p-1}} \geq \underset{i\in\{1,2\}}{\min}\underset{\partial\Omega_{s}(K_{i})}{\int} \frac{d\mathcal{H}^{n-1}(x)}{\mathrm{dist}(x,\partial K_{i})^{p-1}} + \frac{o(1)}{s^{p-1}}
    \end{equation}
Towards the proof of \eqref{eq:BM_ineq_proof_point_*}, first we recall that we have shown in Corrollary \ref{cor:subsolncomp} 
that $u_{K_{\lambda}}\geq u_{\lambda}^{*}$. Recalling \eqref{eq:supcon} and \eqref{eq:conprop}, it 
is equivalent to
\begin{equation}\label{eq:BMp0}
    \{u_{K_{\lambda}}\geq s\} \supseteq \{u_{\lambda}^{*}\geq s\} = (1-\lambda)\{u_{K_{1}}\geq s\} + \lambda \{u_{K_{s}}\geq s\},
\end{equation}
for every $s\geq 0$. 
In other words, in terms of the sub-level sets \eqref{eq:omsk}, we have 
\begin{equation}\label{eq:BMp1}
\overline{\Omega_{s}(K_{\lambda})} \supseteq (1-\lambda)\overline{\Omega_{s}(K_{1})} + \lambda \overline{\Omega_{s}(K_{2})}.
\end{equation}
We shall prove \eqref{eq:BM_ineq_proof_point_*} using the indentities of projections and the previous lemmas. To this end, first we find the relation of points on the boundaries with respect to a common normal. Since the domains are $C^{2,+}$, the map $\Psi_{\lambda,i}^{s}: \partial\Omega_{s}(K_{\lambda}) \to \partial\Omega_{s}(K_{i})$ given by
    \begin{equation*}
        \Psi_{\lambda,i}^{s}:= \mathbf{g}_{\Omega_{s}(K_{i})}^{-1}\circ\mathbf{g}_{\Omega_{s}(K_{\lambda})},
    \end{equation*}
    is a diffeomorphism. 
    \begin{figure}[t!]
    \centering
    \begin{tikzpicture}[>=stealth, font=\small]

    % --- Left Hill (K1) ---
    \draw[thick] plot[domain=-2.3:0.4, samples=100] (\x, {2.5 - 1.5*(\x+0.9)^2});
    \draw[dashed] plot[domain=-2.1:0.2, samples=100] (\x, {1.8 - 1.5*(\x+0.9)^2});
\node[below] at (-2.5,-.35) {\tiny $\partial K_1$};
\node[right] at (-2.1,-.3) {\tiny $\partial \Omega_s(K_1)$};

    \coordinate (x1) at (-0.9, 1.8);
    \coordinate (Px1) at (-0.9, 2.5);
    \fill (x1) circle (1.2pt) node[below] {$x_1$};
    \draw[->] (x1) -- (Px1) node[above] {$p_{\partial K_1}(x_1)$};

    % --- Right Hill (K2) ---
    \draw[thick] plot[domain=-0.4:2.3, samples=100] (\x, {2.5 - 1.5*(\x-0.9)^2});
    \draw[dashed] plot[domain=-0.2:2.1, samples=100] (\x, {1.8 - 1.5*(\x-0.9)^2});
\node[below] at (2.5,-.35) {\tiny $\partial K_2$};
\node at (1.5,-.3) {\tiny $\partial \Omega_s(K_2)$};

    \coordinate (x2) at (0.9, 1.8);
    \coordinate (Px2) at (0.9, 2.5);
    \fill (x2) circle (1.2pt) node[below] {$x_2$};
    \draw[->] (x2) -- (Px2) node[above] {$p_{\partial K_2}(x_2)$};

    % --- Lowered & Flatter Top Outer Curve (K lambda) ---
    % Vertical intercept lowered to 4.5 (solid) and 3.8 (dashed) to be closer to hills
    \draw[thick] plot[domain=-4.5:4.5, samples=100] (\x, {4.5 - 0.12*(\x)^2}) node[right] {$\partial K_{\lambda}$};
    \draw[dashed] plot[domain=-4.3:4.3, samples=100] (\x, {3.8 - 0.12*(\x)^2}) node[below right] {$\partial \Omega_{s}(K_{\lambda})$};

    % Point x and Projection on the apex
    \coordinate (x) at (0, 3.8);
    \coordinate (Px) at (0, 4.5);
    \fill (x) circle (1.2pt) node[below] {$x$};
    \draw[->] (x) -- (Px) node[above] {$p_{\partial K_{\lambda}}(x)$};

    % --- Axis/Label ---
    \draw[->] (-4.8, -.4) -- (-4.8, 0) node[midway, right] {$\xi$};

\end{tikzpicture}
\caption{}
\label{fig:bdpts}
\end{figure}
    Given any $x\in\partial\Omega_{s}(K_{\lambda})$, let $x_{i}= \Psi_{\lambda, i}^{s}(x) \in \partial\Omega_{s}(K_{i})$ so that we have  
    $$\mathbf{g}_{\Omega_{s}(K_{\lambda})}(x) = \mathbf{g}_{\Omega_{s}(K_{i})}(x_{i})=:\xi,$$ for $i\in \{1,2\}$, see Figure \ref{fig:bdpts}.  
    As $p>1$, using \eqref{eq:proj3} followed by  \eqref{eq:BMp1} and \eqref{eq:suppminkadd}, we have
 \begin{equation}\label{eq:BM_ineq_proof_point_3}
\begin{aligned}
        \frac{1}{\mathrm{dist}(x,\partial K_{\lambda})^{p-1}} = \left(\frac{h_{\Omega_{s}(K_{\lambda})}(\mathbf{g}_{\Omega_{s}(K_{\lambda})}(x))}{\langle x, x - p_{\partial K_{\lambda}}(x)\rangle }\right)^{p-1} \notag \geq \left[ \frac{(1-\lambda)h_{\Omega_{s}(K_{1})}(\xi) + \lambda h_{\Omega_{s}(K_{2})}(\xi)}{\langle x, x-p_{\partial K_{\lambda}}(x)\rangle}\right]^{p-1}.
    \end{aligned}
\end{equation}
To continue, we use Lemma \ref{lem:projineq} to estimate the denominator of the above. For each $i\in \{1,2\}$, we pick a tangent vector $v_{i}\in T_{p_{\partial K_{\lambda}(x)}}(\partial K_{\lambda})\cap H_{i,\lambda}^+$, where $H_{i,\lambda}^+$ is a half-space defined by
    \begin{equation*}
        H_{i,\lambda}^+ = \left\{v\in\mathbb{R}^{n}: \langle v, \xi\rangle \geq 
        \inp{ x - \frac{d_{i}}{d_{\lambda}}x_{i}}{ \xi}\right\},
    \end{equation*}
    where $d_{i} = \mathrm{dist}(x_{i}, \partial K_{i})$, $d_{\lambda} = \mathrm{dist}(x, \partial K_{\lambda})$. Hence, we have $\langle x - v_{i}, d_{\lambda}\xi \rangle \leq \langle x_{i}, d_{i}\xi\rangle$. Recalling \eqref{eq:proj1}, we know that 
    \begin{equation}
        \frac{x - p_{\partial K_{\lambda}}(x)}{d_\lambda} = \xi = \frac{x_{i} - p_{\partial K_{i}}(x_{i})}{d_{i}}.
    \end{equation}
Therefore, we have $\langle x - v_{i}, x - p_{\partial K_{\lambda}}(x) \rangle \leq \langle x_{i}, x_{i} - p_{\partial K_{i}}(x_i) \rangle$, which together with \eqref{eq:projineq} of Lemma \ref{lem:projineq}, leads to 
\begin{equation}\label{eq:BM_ineq_proof_point_6}
        \frac{1}{\langle x, x-p_{\partial K_{\lambda}}(x) \rangle} \geq \frac{1}{\left\langle x - v_{i}, x - p_{\partial K_{\lambda}}(x)\right\rangle} \geq \frac{1}{\langle x_{i}, x_{i} - p_{\partial K_{i}}(x_{i})\rangle}.
    \end{equation}
Using \eqref{eq:BM_ineq_proof_point_6} and \eqref{eq:proj3} to continue the above estimate, we obtain 
    \begin{align}\label{eq:BM_ineq_proof_point_7}
        \frac{1}{\mathrm{dist}(x,\partial K_{\lambda})^{p-1}} &\geq \left[\frac{(1-\lambda)h_{\Omega_{s}(K_{1})}(\mathbf{g}_{\Omega_{s}(K_{1})}(x_{1}))}{\langle x_{1}, x_{1}-p_{\partial K_{1}}(x_{1})\rangle} + \lambda \frac{h_{\Omega_{s}(K_{2})}(\mathbf{g}_{\Omega_{s}(K_{2})}(x_{2}))}{\langle x_{2}, x_{2}-p_{\partial K_{2}}(x_{2})\rangle}\right]^{p-1}\\
       &= \left[\frac{1-\lambda}{\mathrm{dist}(x_{1},\partial K_{1})} + \frac{\lambda}{\mathrm{dist}(x_{2},\partial K_{2})}\right]^{p-1} \geq \underset{i\in\{1,2\}}{\min}\, \frac{1}{\mathrm{dist}(x_{i}, \partial K_{i})^{p-1}}. \notag
    \end{align}
To complete the proof, we integrate \eqref{eq:BM_ineq_proof_point_7} to obtain the following,
    \begin{align}\label{eq:BM_ineq_proof_point_8}
        \underset{\partial{\Omega_{s}}(K_{\lambda})}{\int} \frac{d \mathcal{H}^{n-1}(x)}{\mathrm{dist}(x,\partial K_{\lambda})^{p-1}}&\geq \underset{i\in\{1,2\}}{\min}\underset{\partial{\Omega_{s}}(K_{\lambda})}{\int} \frac{d\mathcal{H}^{n-1}(x)}{\mathrm{dist}(\Psi^{s}_{\lambda,i}(x), \partial K_{i})^{p-1}} \notag\\
        &= \underset{i\in\{1,2\}}{\min}\underset{\partial{\Omega_{s}}(K_{i})}{\int} \frac{|\det(d\Psi_{\lambda, i}^{s})^{-1}|}{\mathrm{dist}(x', \partial K_{i})^{p-1}}d\mathcal{H}^{n-1}(x'),
    \end{align}
 Now, recall that $\mathcal{W}_{\Omega_{s}(K)} = d\mathbf{g}_{\Omega_{s}(K)}$ is the Weingarten map whose determinant equals Gaussian curvature $\mathcal{K}_{\partial\Omega_{s}(K)}$ (see Section \ref{sec:prelim}). Therefore, we have
    \begin{equation*}
        \det(d\Psi_{\lambda,i}^{s})^{-1} = \det(d\mathbf{g}_{\Omega_{s}(K_{\lambda})}^{-1}\circ d\mathbf{g}_{\Omega_{s}(K_{i})})= \det(\mathcal{W}_{\Omega_{s}(K_{\lambda})}^{-1})\det(\mathcal{W}_{\Omega_{s}(K_{i})}) = \frac{\mathcal{K}_{\partial\Omega_{s}(K_{i})}}{\mathcal{K}_{\partial\Omega_{s}(K_{\lambda})}} . 
    \end{equation*}
Now, recalling \eqref{eq:gcminkadd}, we know that the Gaussian curvature decreases with Minkowski addition, hence $\mathcal{K}_{\partial K_{\lambda}} \leq \mathcal{K}_{\partial K_{i}}$. From \eqref{eq:omstok}, we have 
$d_\h( \Om_s(K), K)\to 0^+$ and it is a standard result of Hausdorff convergence of $C^{2}_+$ domains that $\kr_{\del\Om_s(K)}\to \kr_{\del K}, \ 1/\kr_{\del\Om_s(K)}\to 1/\kr_{\del K}$ pointwise $\h^{n-1}$-a.e. as $s\to 0^+$, see Weil \cite{W1,W2}. Therefore, 
we have 
\begin{equation}\label{eq:curvineq}
 \det(d\Psi_{\lambda,i}^{s})^{-1}=\frac{\mathcal{K}_{\partial\Omega_{s}(K_{i})}}{\mathcal{K}_{\partial\Omega_{s}(K_{\lambda})}}= \frac{\kr_{\del K_i}}{\kr_{\del K_\lambda}} + o(1) \geq 1+o(1),
\quad \h^{n-1}-a.e.
\end{equation}
Using \eqref{eq:curvineq} on \eqref{eq:BM_ineq_proof_point_8} together with \eqref{eq:decrt} of Lemma \ref{lem:decrt}, we finally obtain 
$$ 
\underset{\partial{\Omega_{s}}(K_{\lambda})}{\int} \frac{d \mathcal{H}^{n-1}(x)}{\mathrm{dist}(x,\partial K_{\lambda})^{p-1}}\geq \underset{i\in\{1,2\}}{\min}\underset{\partial{\Omega_{s}}(K_{i})}{\int} \frac{d\mathcal{H}^{n-1}(x')}{\mathrm{dist}(x', \partial K_{i})^{p-1}} + \frac{o(1)}{s^{p-1}}.
$$
Thus we have established \eqref{eq:BM_ineq_proof_point_*} as desired and the proof is complete. 
\end{proof}

The proof of the classical Brunn-Minkowski inequality \eqref{eq:bm} follows from Theorem \ref{thm:bm1} in a standard way as described by Gardner \cite{G}, which we provide below for completeness. 
%To deal with the case for equality, we require the following.
%\begin{lemma}\label{lem:tomtk}
%Given any $K\in \mathcal{N}(K_0)$ and $\Om_t(K)$ be as in \eqref{eq:omsk}, we have 
%\begin{equation}\label{eq:tomtk}
%\T(\Om_t(K)) = \frac{c\, \T(K)}{(1-t/\eps_0)^{p-1}} 
%\end{equation}
%for $0\leq t\leq \eps_0/2$, 
%where $ c = c(n,p, \gamma_0, \mathrm{diam}(K_0)) > 0$, with $\gamma_0$ is modulus of convexity of $K_0$. 
%\end{lemma}
%\begin{proof}
%We make use of the characterization \eqref{eq:limchar1} from Proposition \ref{prop:limchar} and use the notation $\sim$ similarly as in the proof of Proposition \ref{prop:limchar}. Recall that $u_{\Om_t(K)}= (u_K-t)/(1-t/\eps_0)$ from \eqref{eq:uomt} of Lemma \ref{lem:phsublev}. 
%Therefore, using \eqref{eq:limchar}, note that 
%\begin{equation}\label{eq:limchar2}
%\begin{aligned}
%\T(\Omega_t(K)) &\sim \lim_{s\to 0^{+}}\int_{\{u_{\Om_t(K)}=s\}} \left(\frac{s}{\mathrm{dist}(x,\partial\Omega_t(K))}\right)^{p-1} d\mathcal{H}^{n-1}(x)\\
%&= \lim_{s\to 0^{+}}\int_{\{u_K-t=s\}} \left(\frac{s/(1-t/\eps_0)}{\mathrm{dist}(x,\partial\Omega_t(K))}\right)^{p-1} d\mathcal{H}^{n-1}(x)\\
%&= \frac{1}{(1-t/\eps_0)^{p-1}} 
%\lim_{s\to 0^{+}}\int_{\del\Om_{t+s}(K)} \left(\frac{s}{\mathrm{dist}(x,\partial\Omega_t(K))}\right)^{p-1} d\mathcal{H}^{n-1}(x),
%\end{aligned} 
%    \end{equation}
%for all $0\leq t\leq \eps_0/2$. 
%
%\end{proof}

\begin{proof}[Proof of Theorem \ref{thm:bm}]
Given $K_1, K_2\in \mathcal{N}(K_0)$, we take the dilates 
$K_1'=\delta_1 K_1$ and $K_2'=\delta_2 K_2$ such that 
$\delta_1,\delta_2\in (1-\tau',1+\tau')$ for $\tau'>0$ small enough ensuring 
$K_1', K_2'\in \mathcal{N}(K_0)$. Hence, from local homogenity of Proposition \ref{prop:homhad}, we have $\T(K_i')=\delta_i^{n-p+1}\T(K_i)$ for each $i\in\{1,2\}$. 
Now, let
$$ m=\min\,\{ \T(K_1'), \T(K_2')\}\quad \text{and}\quad 
\nu=\frac{t/\delta_2}{s/\delta_1+t/\delta_2},$$
for any $s, t>0$. Hence, using \eqref{eq:bm1} of Theorem \ref{thm:bm1}, we get
\begin{equation}\label{eq:scbm}
m\leq \T\big((1-\nu)K_1'+\nu K_2'\big)
=\T\left( \frac{(s/\delta_1)K_1'+(t/\delta_2)K_2'}{s/\delta_1+t/\delta_2}\right)
=\T\Big( \frac{sK_1+t K_2}{s/\delta_1+t/\delta_2}\Big)
\end{equation}
Now given any $\lambda\in[0,1]$, we choose $s=1-\lambda$ and $t=\lambda$
so that when $\tau'>0$ is small enough, we have 
$1/[(1-\lambda)/\delta_1+\lambda/\delta_2]\in (1-\tau',1+\tau')$. This allows us to use the local homogenity of Proposition \ref{prop:homhad} on \eqref{eq:scbm} again to conclude 
\begin{equation}\label{eq:scbm1}
m \left(\frac{1-\lambda}{\delta_1}+\frac{\lambda}{\delta_2}\right)^{n-p+1} \leq \T((1-\lambda)K_1+\lambda K_2)
\end{equation}
Now, for $p< n+1$ and the neighborhood $\mathcal{N}(K_0)$ being chosen small enough, 
the choice of $$\delta_i= \big(\T(K_0)/\T(K_i)\big)^\npbm,\quad \text{for each}\ \ i\in \{1,2\},$$ 
%for each $i\in \{1,2\}$ 
is admissible for $K_1, K_2\in \mathcal{N}(K_0)$. In this case, $\T(K_i')=\T(K_0)$ for each $i\in \{1,2\}$, hence 
$m=\T(K_0)$ and it is easy to see that the inequality \eqref{eq:bm0} is obtained from 
\eqref{eq:scbm1}. 
%Now, let us consider the case for equality of \eqref{eq:bm0}. Note that, a strict inequality in \eqref{eq:BMp0} and hence \eqref{eq:BMp1}, would 
\end{proof}

\bibliographystyle{plain}
\bibliography{minkprob}

\end{document}